\DeclareMathAlphabet{\mathcalligra}{T1}{calligra}{m}{n}
\DeclareFontShape{T1}{calligra}{m}{n}{<->s*[1.1]callig15}{}
\DeclareFontFamily{OT1}{pzc}{}
\DeclareFontShape{OT1}{pzc}{m}{it}{<-> s * [1.0] pzcmi7t}{}
\DeclareMathAlphabet{\mathpzc}{OT1}{pzc}{m}{it}
\crefname{equation}{}{}
\definecolor{CeruleanRef}{RGB}{12,127,172}
\renewcommand*{\eqref}[1]{%
  \hyperref[{#1}]{\textup{\tagform@{\ref*{#1}}}}%
}
\newtheorem{theorem}{Theorem}[section]
\newtheorem{lemma}[theorem]{Lemma}
\newtheorem{corollary}[theorem]{Corollary}
\newtheorem{proposition}[theorem]{Proposition}
\theoremstyle{definition}
\newtheorem{example}[theorem]{Example}
\theoremstyle{remark}
\newtheorem{remark}[theorem]{Remark}
\let\inf\relax \DeclareMathOperator*\inf{\vphantom{p}inf}
\let\max\relax \DeclareMathOperator*\max{\vphantom{p}max}
\let\subset\relax \DeclareMathOperator{\subset}{\subseteq}
\def\quotient#1#2{%
    \raise0.1pt\hbox{$#1$}/\lower0.6pt\hbox{$#2$}%
}
\newcommand{\normm}{\@ifstar\@normms\@normm}
\newcommand{\@normms}[1]{%
  \left|\mkern-1.5mu\left|\mkern-1.5mu\left|
   #1
  \right|\mkern-1.5mu\right|\mkern-1.5mu\right|
}
\newcommand{\@normm}[2][]{%
  \mathopen{#1|\mkern-1.5mu#1|\mkern-1.5mu#1|}
  #2
  \mathclose{#1|\mkern-1.5mu#1|\mkern-1.5mu#1|}
}
\newsavebox\CBox
\newcommand{\leqnomode}{\tagsleft@true\let\veqno\@@leqno}
\newcommand{\reqnomode}{\tagsleft@false\let\veqno\@@eqno}
\newcommand{\R}{\mathbb{R}} 
\newcommand{\N}{\mathbb{N}} 
\newcommand{\inv}{^{\raisebox{.2ex}{$\scriptscriptstyle-1$}}}
\newcommand{\T}{{}^{\raisebox{.2ex}{$\scriptscriptstyle\mathsf{T}$}}}
\newcommand{\mesh}{\Omega_h} 
\DeclareMathOperator{\dd}{d} 
\newcommand{\bdry}{\partial} 
\DeclareMathOperator{\tr}{\mathrm{tr}} 
\newcommand{\selement}{{\scriptscriptstyle K}} 
\newcommand{\sOmega}{{\scriptscriptstyle \Omega}} 
\newcommand{\kernel}{\mathrm{Null}}
\newcommand{\range}{\mathrm{Range}}
\newcommand{\bzeta}{\vec{\zeta}}
\newcommand{\blambda}{\vec{\lambda}}
\newcommand{\bmmu}{\vec{\mu}}
\newcommand{\bnu}{\vec{\nu}}
\newcommand{\bsigma}{\vec{\sigma}}
\newcommand{\btau}{\vec{\tau}}
\renewcommand\Omega{\varOmega}
\renewcommand\Gamma{\varGamma}
\renewcommand\Pi{\varPi}
\DeclareMathOperator{\Riesz}{\mathscr{R}} 
\DeclareMathOperator*{\argmin}{arg\,min}
\DeclareMathOperator{\grad}{grad}
\DeclareMathOperator{\rot}{rot}
\DeclareMathOperator{\curl}{curl}
\let\div\relax 
\DeclareMathOperator{\div}{div}
\newcommand{\dashto}[1][2pt]{
  \settowidth{\@tempdima}{${}\rightarrow{}$}
  \makebox[\@tempdima]{${}\rightarrow{}$}
  \makebox[-\@tempdima]{\hspace{-0.1\@tempdima}\color{white}\rule[0.5ex]{#1}{1pt}}
  \makebox[\@tempdima]{}
  }
\let\tilde\widetilde
\let\hat\widehat
\newcommand{\jump}[1]{\llbracket #1 \rrbracket}
\newcommand{\upperone}[2]{\raisebox{-0.2\height}{$#1{}^1\!$}}
\newcommand{\upone}{\mathpalette\upperone\relax}
\newcommand{\lowertwo}[2]{\raisebox{-0.1\height}{$#1{}_{\!2}$}}
\newcommand{\lowtwo}{\mathpalette\lowertwo\relax}
\newcommand{\frachalf}[2]{\raisebox{0.0\height}{$#1{}\upone/\lowtwo$}}
\newcommand{\onehalf}{\mathpalette\frachalf\relax}
\newcommand{\minusonehalf}{{\raisebox{.4ex}{{\protect\scalebox{0.5}{$-$}}}}{\onehalf}}
\newcommand{\mcD}{\mathcal{D}}
\newcommand{\mcE}{\mathcal{E}}
\newcommand{\mcI}{\mathcal{I}}
\newcommand{\mcJ}{\mathcal{J}}
\newcommand{\mcL}{\mathcal{L}}
\newcommand{\mcQ}{\mathcal{Q}}
\newcommand{\mcR}{\mathcal{R}}
\newcommand{\mcT}{\mathcal{T}}
\newcommand{\sff}{\mathsf{f}}
\newcommand{\sfg}{\mathsf{g}}
\newcommand{\sfv}{\mathsf{v}}
\newcommand{\sfw}{\mathsf{w}}
\newcommand{\sfB}{\mathsf{B}}
\newcommand{\sfG}{\mathsf{G}}
\newcommand{\bmn}{\vec{n}}
\newcommand{\bmp}{\vec{p}}
\newcommand{\bmq}{\vec{q}}
\newcommand{\bmr}{\vec{r}}
\newcommand{\bmu}{\vec{u}}
\newcommand{\bmv}{\vec{v}}
\newcommand{\bmH}{H}
\newcommand{\bmV}{\bm{V}}
\newcommand{\scB}{\mathscr{B}}
\newcommand{\scR}{\mathscr{R}}
\newcommand{\scU}{\mathscr{U}}
\newcommand{\scV}{\mathscr{V}}
\newcommand{\fku}{u}
\newcommand{\fkv}{v}
\newcommand{\fkw}{w}
\newcommand{\calliD}{\hspace{-2.5pt}\mathcalligra{D}\hspace{2.9pt}}
\newcommand{\Eg}{\hspace{-.5pt}\mathcalligra{E}_{\mathrm{grad}}\hspace{.5pt}}
\newcommand{\calliE}{\Eg}
\newcommand{\Ed}{\hspace{-.5pt}\mathcalligra{E}_{\mathrm{div}}\hspace{.5pt}}
\newcommand{\overbar}[1]{\mkern 1.5mu\overline{\mkern-1.5mu#1\mkern-1.5mu}\mkern 1.5mu}
\newcommand{\shoverbar}[1]{\mkern 3.5mu\overline{\mkern-3.5mu#1\mkern-1.5mu}\mkern 1.5mu}
\newcommand{\Dirichlet}{{\protect\scalebox{0.6}{$\mathrm{D}$}}}
\newcommand{\Neumann}{{\protect\scalebox{0.6}{$\mathrm{N}$}}}
\newcommand{\interior}{{\protect\scalebox{0.6}{$\mathrm{int}$}}}
\newcommand{\Eint}{\mcE_{\interior}}
\DeclareSymbolFont{sfletters}{OML}{cmbrm}{m}{it}
\DeclareMathSymbol{\sfalpha}{\mathord}{sfletters}{"0B}
\DeclareMathSymbol{\sfbeta}{\mathord}{sfletters}{"0C}
\DeclareMathSymbol{\sfgamma}{\mathord}{sfletters}{"0D}
\DeclareMathSymbol{\sfdelta}{\mathord}{sfletters}{"0E}
\DeclareMathSymbol{\sfepsilon}{\mathord}{sfletters}{"0F}
\DeclareMathSymbol{\sfzeta}{\mathord}{sfletters}{"10}
\DeclareMathSymbol{\sfeta}{\mathord}{sfletters}{"11}
\DeclareMathSymbol{\sftheta}{\mathord}{sfletters}{"12}
\DeclareMathSymbol{\sfiota}{\mathord}{sfletters}{"13}
\DeclareMathSymbol{\sfkappa}{\mathord}{sfletters}{"14}
\DeclareMathSymbol{\sflambda}{\mathord}{sfletters}{"15}
\DeclareMathSymbol{\sfmu}{\mathord}{sfletters}{"16}
\DeclareMathSymbol{\sfnu}{\mathord}{sfletters}{"17}
\DeclareMathSymbol{\sfxi}{\mathord}{sfletters}{"18}
\DeclareMathSymbol{\sfpi}{\mathord}{sfletters}{"19}
\DeclareMathSymbol{\sfrho}{\mathord}{sfletters}{"1A}
\DeclareMathSymbol{\sfsigma}{\mathord}{sfletters}{"1B}
\DeclareMathSymbol{\sftau}{\mathord}{sfletters}{"1C}
\DeclareMathSymbol{\sfupsilon}{\mathord}{sfletters}{"1D}
\DeclareMathSymbol{\sfphi}{\mathord}{sfletters}{"1E}
\DeclareMathSymbol{\sfchi}{\mathord}{sfletters}{"1F}
\DeclareMathSymbol{\sfpsi}{\mathord}{sfletters}{"20}
\DeclareMathSymbol{\sfomega}{\mathord}{sfletters}{"21}
\DeclareMathSymbol{\sfvarepsilon}{\mathord}{sfletters}{"22}
\DeclareMathSymbol{\sfvartheta}{\mathord}{sfletters}{"23}
\DeclareMathSymbol{\sfvarpi}{\mathord}{sfletters}{"24}
\DeclareMathSymbol{\sfvarrho}{\mathord}{sfletters}{"25}
\DeclareMathSymbol{\sfvarsigma}{\mathord}{sfletters}{"26}
\DeclareMathSymbol{\sfvarphi}{\mathord}{sfletters}{"27}
\DeclareSymbolFont{bsfletters}{OML}{cmbrm}{b}{it}
\DeclareMathSymbol{\bsfalpha}{\mathord}{sfletters}{"0B}
\DeclareMathSymbol{\bsfbeta}{\mathord}{sfletters}{"0C}
\DeclareMathSymbol{\bsfgamma}{\mathord}{sfletters}{"0D}
\DeclareMathSymbol{\bsfdelta}{\mathord}{sfletters}{"0E}
\DeclareMathSymbol{\bsfepsilon}{\mathord}{sfletters}{"0F}
\DeclareMathSymbol{\bsfzeta}{\mathord}{sfletters}{"10}
\DeclareMathSymbol{\bsfeta}{\mathord}{sfletters}{"11}
\DeclareMathSymbol{\bsftheta}{\mathord}{sfletters}{"12}
\DeclareMathSymbol{\bsfiota}{\mathord}{sfletters}{"13}
\DeclareMathSymbol{\bsfkappa}{\mathord}{sfletters}{"14}
\DeclareMathSymbol{\bsflambda}{\mathord}{sfletters}{"15}
\DeclareMathSymbol{\bsfmu}{\mathord}{sfletters}{"16}
\DeclareMathSymbol{\bsfnu}{\mathord}{sfletters}{"17}
\DeclareMathSymbol{\bsfxi}{\mathord}{sfletters}{"18}
\DeclareMathSymbol{\bsfpi}{\mathord}{sfletters}{"19}
\DeclareMathSymbol{\bsfrho}{\mathord}{sfletters}{"1A}
\DeclareMathSymbol{\bsfsigma}{\mathord}{sfletters}{"1B}
\DeclareMathSymbol{\bsftau}{\mathord}{sfletters}{"1C}
\DeclareMathSymbol{\bsfupsilon}{\mathord}{sfletters}{"1D}
\DeclareMathSymbol{\bsfphi}{\mathord}{sfletters}{"1E}
\DeclareMathSymbol{\bsfchi}{\mathord}{sfletters}{"1F}
\DeclareMathSymbol{\bsfpsi}{\mathord}{sfletters}{"20}
\DeclareMathSymbol{\bsfomega}{\mathord}{sfletters}{"21}
\DeclareMathSymbol{\bsfvarepsilon}{\mathord}{sfletters}{"22}
\DeclareMathSymbol{\bsfvartheta}{\mathord}{sfletters}{"23}
\DeclareMathSymbol{\bsfvarpi}{\mathord}{sfletters}{"24}
\DeclareMathSymbol{\bsfvarrho}{\mathord}{sfletters}{"25}
\DeclareMathSymbol{\bsfvarsigma}{\mathord}{sfletters}{"26}
\DeclareMathSymbol{\bsfvarphi}{\mathord}{sfletters}{"27}
\newcommand{\bdryOp}{\calliD}
\newcommand{\hbdryOp}{\calliD_h}
\newcommand{\bdryOpAdj}{\calliD^\ast}
\newcommand{\hbdryOpAdj}{\calliD^\ast_h}
\newsavebox{\foobox}
\newcommand{\Lshaped}[1]{%
\begin{tikzpicture}[#1]%
\draw (0,0) -- (0ex,1ex);%
\draw (0.5ex,0) -- (0.5ex,1ex);%
\draw (1ex,0.5ex) -- (1ex,1ex);%
\draw (0,1ex) -- (1ex,1ex);%
\draw (0,0.5ex) -- (1ex,0.5ex);%
\draw (0,0) -- (0.5ex,0ex);%
\end{tikzpicture}%
}
\newcommand{\om}{\varOmega}
\newcommand{\som}{{\scriptscriptstyle\varOmega}}
\newcommand{\veps}{\varepsilon}
\newcommand{\U}{U}
\newcommand{\V}{V}
\newcommand\bknote[2][]{\todo[inline, caption={2do}, color=red!50 #1]{
\begin{minipage}{\textwidth-4pt}\underline{BK:} #2\end{minipage}}}
\newcommand{\RRR}{\mathbb{R}}
\newcommand{\D}{\mathcal{D}}
\newcommand{\lprp}[1]{\sideset{^\perp}{}{\mathop{#1}}}
\newcommand{\ip}[1]{\langle {#1} \rangle}
\newcommand{\oh}{\om_h}
\newcommand{\ohE}{\om_{h, E}}
\newcommand{\vpi}{\varPi}
\newcommand{\LL}{\mathcal{L}}
\newcommand{\Igrad}{\mcI_\mathrm{grad}}
\newcommand{\Idiv}{\mcI_\mathrm{div}}
\newcommand{\vsig}{{\vec{\sigma}}}
\newcommand{\vpsi}{{\vec{\psi}}}
\let\d\partial
\title{The DPG-star method}
\author[Demkowicz]{Leszek Demkowicz}
\author[Gopalakrishnan]{Jay Gopalakrishnan}
\author[Keith]{Brendan Keith}
\address[Demkowicz]{
	The Institute for Computational Engineering and Sciences (ICES),
	The University of Texas at Austin,
	Austin, 
	TX 78712,
	U.S.A.
}
\address[Gopalakrishnan]{
	Portland State University, Portland, OR 97207-0751, U.S.A.
}
\address[Keith]{
  Technische Universität München,
  Garching,
  85748,
  Germany
}
\keywords{
	DPG method, ultraweak formulation, a priori, a posteriori, duality.
}
\thanks{\textbf{Acknowledgments.}
	This work was partially supported with grants by NSF (DMS-1418822), AFOSR (FA9550-17-1-0090), and ONR (N00014-15-1-2496).
	Part of this work was done while the second author was at The University of Texas at Austin on a J. Tinsley Oden Faculty Fellowship.
  The third author was additionally supported in part by the 2017 Graduate School University Graduate Continuing Fellowship at The University of Texas at Austin.
	The authors express their gratitude to Nathan V. Roberts and Socratis Petrides for their assistance during the numerical experiments with certain features of the Camellia and $hp$2D software packages, respectively.
}
\date{}
\begin{document}

\begin{abstract}
  This article introduces the DPG-star (from now on, denoted DPG*)
  finite element method. It is a method that is in some sense dual to
  the discontinuous Petrov--Galerkin (DPG) method.  The DPG
  methodology can be viewed as a means to solve an overdetermined
  discretization of a boundary value problem.  In the same vein, the
  DPG* methodology is a means to solve an underdetermined
  discretization.  These two viewpoints are developed by embedding the
  same operator equation into two different saddle-point problems. The
  analyses of the two problems have many common elements.  Comparison
  to other methods in the literature round out the newly garnered
  perspective.  Notably, DPG* and DPG methods can be seen as
  generalizations of $\mcL\mcL^\ast$ and least-squares methods,
  respectively.  \textit{A priori} error analysis and
  \textit{a posteriori} error control
  for the DPG* method are considered in detail.  Reports of several
  numerical experiments are provided which demonstrate the essential
  features of the new method.  A notable difference between the
  results from the DPG* and DPG analyses is that the convergence rates
  of the former are limited by the regularity of an extraneous
  Lagrange multiplier variable.
\end{abstract}

\maketitle


\section{Introduction} 
\label{sec:introduction}

The \emph{ideal Discontinuous Petrov--Galerkin} (\emph{DPG})
\emph{Method with Optimal Test Functions}
\cite{demkowicz2010class,demkowicz2011class} admits three
interpretations \cite{demkowicz2015encyclopedia}.  First, it can be
viewed as a Petrov--Galerkin (PG) discretization in which {\em
  optimal} test functions are computed on the fly.  Here, the word
``optimal'' refers to the fact that the test functions realize the
supremum in the discrete inf-sup stability condition and, therefore,
the PG discretization automatically inherits the stability of the
continuous method.  The DPG method can also be viewed as a minimum
residual method in which the residual is measured in a dual norm
implied by an underlying test norm.  Finally, the DPG method can be
viewed also as a mixed method \cite{cohen2012adaptivity} wherein one
simultaneously solves for the Riesz representation of the residual
\textemdash{} the so-called {\em error representation function}
\textemdash{} and the approximate solution.  All three equivalent
interpretations involve the inversion of a Riesz operator on the test
space which, in general, cannot be done exactly and has to be
approximated.  This naturally leads to the introduction of an {\em
  enriched} or {\em search} test space \textemdash{} having dimension
larger than that of the latent trial space \textemdash{} and a
discretized Riesz operator.  In this way, the corresponding {\em
  practical DPG method} retains its three interpretations, although
now with {\em approximate} optimal test functions, an {\em
  approximate} residual, and an {\em approximate} error representation
function \cite{gopalakrishnan2014analysis,Nagaraj2015,Carstensen15}.

In the DPG method, the word ``discontinuous'' corresponds to the use
of discontinuous, but conforming, test functions (from broken spaces)
which make the whole methodology computationally efficient.  These
broken spaces also naturally lead to (hybridized) interface solution
variables.  Broken space formulations provide a foundation for the DPG
methodology and can be developed for any well-posed variational
formulation~\cite{Carstensen15}.

Originally motivated by the duality theory in \cite{Keith2017Goal}, in
this article, the DPG methodology is expanded on by reconsidering how
a given operator equation can be embedded into a DPG-type saddle-point
problem.  In turn, the minimum residual principle underpinning DPG
methods can be discarded for a minimum norm principle and a dual class
of methods (i.e. DPG* methods\footnote{DPG* methods are distinct from
  the saddle point least squares methods \cite{BACUTA20152920} which have
  separately been contenders for being named  ``dual'' to DPG methods.}) can be
introduced.
Ultimately, an entire class of stable DPG-type mixed methods can be proposed, simply by changing loads in the saddle-point problem and the interpretation of the corresponding solution variables.
This broad perspective can help relate several different methods, including weakly conforming least squares methods \cite{Ernesti2017} and $\mcL\mcL^\ast$ methods \cite{cai2001first} to the existing DPG theory.

This article provides a number of important \textit{a priori} error
analysis results for DPG* methods.
Unlike standard DPG methods, the convergence rate of a DPG* solution
is not controlled solely by the regularity of the solution itself, but
instead also by the regularity of a Lagrange multiplier variable found
in the corresponding saddle-point formulation.  This article also
expands on the recent \textit{a posteriori} error estimation theory
first introduced in \cite{Keith2017Goal} and re-establishes much of
Repin, Sauter, and Smolianski's abstract \textit{a posteriori} theory
for mixed methods \cite{repin2007two} in the present context.  In
addition, it includes several standard numerical examples to verify
the theory for DPG* methods, including one example employing
$hp$-adaptive mesh refinement.  This work is part of the PhD thesis
\cite{Keith2018thesis}.


\section{The DPG and DPG* methods} 
\label{sec:the_dpg_and_dpg_methods}

\subsection{Operator equations} 
\label{sub:operator_equations}

Central to this paper are the twin relatives of the operator equation
\begin{equation}
  \label{eq:Au=f}
  B u = \ell,
\end{equation}
given in~\eqref{eq:dpgA} and~\eqref{eq:dpg*A} below.  Here
$B : \U \to \V^{\prime}$ is a bounded linear operator from a Hilbert space
$\U$ to the dual of a Hilbert space $\V$, $\ell \in \V^{\prime}$ is given, and
$u \in \U$ is to be found. All spaces here are over $\RRR$, the real
field. In any Hilbert space $X$, the action of a functional
$E \in X^{\prime}$ on $x \in X$ is denoted by $\ip{ E, x}_X$.  When the space
is clear from context, we also use $E(x)$ to denote the same
number. Let $B^{\prime} : \V \to \U^{\prime}$ be the dual of $B$ defined by
$\ip{ B^{\prime} v, u}_U = \ip{Bu, v}_V$ for all $u \in \U$ and $v \in \V$.
The reason for using $\prime$ instead of $\ast$ to denote the dual
operator will become evident when a different, but related, notion of
duality is introduced in
\cref{sub:ultraweak_formulations}.\footnote{Therefore, the asterisk in
  the DPG* method is evocative of the connections to the $\mcL\mcL^\ast$ method \cite{cai2001first}.}

The two reformulations are as follows.
\begin{align}
  & \text{Find } u \in \U \text{ and } \veps \in \V \text{ satisfying } 
  && \left\{
  \begin{alignedat}{3}
  &\scR_\V \veps + B u &&= \ell,
  \\
  &B^{\prime} \veps &&= 0.     
  \end{alignedat}
\right.
  \label{eq:dpgA}
  \\
  & \text{Find } u \in \U \text{ and } \lambda \in \V  \text{ satisfying
    }
  &&
  \left\{
  \begin{alignedat}{3}
    &\scR_\U u - B^{\prime} \lambda &&= 0,
    \\
    &B u &&= \ell.
  \end{alignedat}
\right.
  \label{eq:dpg*A}
\end{align}
Here $\scR_\V: \V \to \V^{\prime}$ is the Riesz operator acting on $\V$, defined using
the inner product $(\cdot, \cdot)_\V$ by
$(\scR_\V v)(\nu) = (v, \nu)_\V$ for all $v, \nu \in \V$. The Riesz operator
$\scR_\U$ is defined similarly. It is immediate that if $u$
solves~\eqref{eq:Au=f}, then with $\veps=0$ it solves~\eqref{eq:dpgA},
revealing a relationship between~\eqref{eq:dpgA} and~\eqref{eq:Au=f}.
The relationship between~\eqref{eq:dpg*A} and~\eqref{eq:Au=f} is also
easy to guess: any solution $(u, \lambda)$ of~\eqref{eq:dpg*A} is such
that the $u$ component solves~\eqref{eq:Au=f}. We shall see below
that, even though related, these formulations are not fully equivalent
to~\eqref{eq:Au=f}. The formulation~\eqref{eq:dpgA} is the one on
which the DPG method is based. The formulation~\eqref{eq:dpg*A}, when
discretized, results in the new DPG* method, as we shall see.


Formulations~\eqref{eq:dpgA} and~\eqref{eq:dpg*A} are structurally similar,
differing mainly in the position where the load $\ell$ is placed.  Due to
the structural similarity, both formulations can be viewed at once as
instantiations of the following general saddle-point problem
\begin{align}
  & \text{Find } v \in \scV \text{ and } w  \in \scU 
    \text{ satisfying}
  &&
  \left\{
    \begin{alignedat}{3}
      \Riesz_\scV &v + \scB w &&= F \,, \\
      \scB^{\prime} & v &&= G \, ,
    \end{alignedat}
  \right.
  \label{eq:Mixed_general}
\end{align}
on some Hilbert spaces $\scU$ and $\scV$, some bounded linear operator
$\scB: \scU \to \scV^{\prime}$, and some given functionals $F \in \scV^{\prime}$ and
$G \in \scU^{\prime}$. Indeed, with
\begin{equation}
  \label{eq:F=f_G=0_setting}
  \scV = V,\; \scU = U, \; \scB = B,\; F = \ell,\; G=0,  
\end{equation}
we obtain~\eqref{eq:dpgA}. If instead, we set 
\begin{equation}
  \label{eq:F=0_G=f_setting}
  \scV = U,\; \scU = V,\; \scB = B^{\prime},\; F=0,\; G=\ell,
\end{equation}
then we obtain~\eqref{eq:dpg*A}. Admittedly, the alternative mixed form
obtained by exchanging $\scB$ and $\scB^{\prime}$ in~\eqref{eq:Mixed_general}
is more natural for studying the DPG* method and even aligns with the
standard notations in mixed method theory~\cite{Boffi2013}. Yet, we
have chosen to work with~\eqref{eq:Mixed_general} to facilitate
comparison with existing DPG literature where the form
of~\eqref{eq:Mixed_general} is more natural.

We proceed under the assumption that $\scB$ is bounded below, i.e.,
there is a $\gamma>0$ such that 
\begin{equation}
\label{eq:Bbddbelow}
  \Vert \scB \mu \Vert_{\scV^{\prime}} \ge \gamma \Vert \mu
  \Vert_\scU,
  \qquad \forall\, \mu\in \scU.
\end{equation}
Note that the maximum of all such $\gamma$ is simply $\|\scB\inv\|$.
Under this assumption, the mixed system~\eqref{eq:Mixed_general} has a
unique solution for any $F \in \scV^{\prime}$ and $G\in \scU^{\prime}$ (see
e.g.~\cite{Boffi2013}). Obviously~\eqref{eq:Bbddbelow} can also be
written out as an $\inf$-$\sup$ condition.
Here and throughout, for any Banach space $X$, the right annihilator of a subset $Y \subset X$ and the
left annihilator of a $Z \subset X^{\prime}$ are defined by
\begin{align}
  \label{eq:rightann}
  Y^\perp
  & = \{ E \in X^{\prime}: \ip{ E, y}_X=0\text{ for all }y\in Y\},
  \\ 
  \label{eq:leftann}
  \lprp Z
  & = \{ x \in X: \ip{ E,x}_X=0\text{ for all }E\in Z\}.
\end{align}
Recall that if $Y\subset X$ is a closed subspace, $\overline{Y} = Y$, then $Y^\perp$ is isomorphic to $(\quotient{X}{Y})^\prime$.

Now consider the mixed system~\eqref{eq:Mixed_general} when $G=0$ and
the related problem of finding $w \in \scU$ satisfying
\begin{equation}
  \label{eq:Bmu=F}
  \scB w = F.
\end{equation}
The regularizing effect of the saddle-point formulation above is already evident:
while~\eqref{eq:Mixed_general} is always solvable
under~\eqref{eq:Bbddbelow}, the related problem~\eqref{eq:Bmu=F} is
solvable {\em provided} $F$ satisfies the compatibility condition
$ F \in (\kernel\,\scB^{\prime})^\perp.$
To reiterate the above observation that~\eqref{eq:Bmu=F} is not fully
equivalent to \eqref{eq:Mixed_general}, we may view~\eqref{eq:Bmu=F} as an
{\em overdetermined system}. Overdetermined systems are solvable only
if they are consistent, i.e., have compatible data. Irrespective of
the data, what the mixed system~\eqref{eq:Mixed_general} solves can be
seen by eliminating $v$ (and recalling that $G=0$), namely
\begin{equation}
  \scB^{\prime} \Riesz_\scV\inv \scB w = \scB^{\prime} \Riesz_\scV\inv F.
\label{eq:normal_equation}
\end{equation}
Equation~\cref{eq:normal_equation} can be immediately identified with what is referred to as a ``normal equation'' in linear algebra.
This is a regularized version of~\eqref{eq:Bmu=F}.
Indeed, whenever~\eqref{eq:Bmu=F} has a solution, it must
be unique due to~\eqref{eq:Bbddbelow}, and that unique solution is
recovered by~\eqref{eq:normal_equation}. However,
\eqref{eq:normal_equation} has a unique solution even
when~\eqref{eq:Bmu=F} does not.

Next, considering the case of $F=0$, we may likewise argue that the mixed
system~\eqref{eq:Mixed_general} also helps us solve {\em
  underdetermined systems}. Under the same
assumption~\eqref{eq:Bbddbelow}, consider 
\begin{equation}
  \scB^{\prime} v = G.
\label{eq:adjoint_problem}
\end{equation}
Assumption~\eqref{eq:Bbddbelow} implies that $\scB^{\prime}$ is surjective,
so~\eqref{eq:adjoint_problem} is always solvable, but its solution
need not be unique in general. Thus,~\eqref{eq:adjoint_problem} may be viewed as an
example of an underdetermined system.
Similar to \cref{eq:normal_equation}, the solution variable $v$ can be readily eliminated from~\cref{eq:Mixed_general} (now recalling that $F=0$):
\begin{equation}
  \scB^{\prime} \Riesz_\scV\inv \scB w = -G.
\label{eq:normal_equation2}
\end{equation}
This equation is in correspondence with a different normal equation (one of the second type \cite{bjorck1996numerical}).
Notice that the left-hand side operator $\scB^{\prime} \Riesz_\scV\inv \scB:\scU\to\scU^\prime$ is the same in both~\cref{eq:normal_equation,eq:normal_equation2} and that the solution $v$ in~\cref{eq:normal_equation2} can be recovered by the relationship $v = -\Riesz_\scV\inv \scB w$.

To reconsider how the mixed
system~\eqref{eq:Mixed_general} converts~\eqref{eq:adjoint_problem} 
into a uniquely solvable problem, we use orthogonal complements in 
Hilbert spaces, which we distinguish from the annihilators in~\eqref{eq:rightann} and \eqref{eq:leftann} by placing the symbol $\perp$ as a
subscript. Thus, while $(\kernel\,\scB^{\prime})^\perp$ is a subspace of
$\scV^{\prime}$, the notation $(\kernel\, \scB^{\prime})_\perp$ indicates the
subspace of $\scV$ defined by
\begin{equation}
  (\kernel\,\scB^{\prime})_\perp = \{ v \in \scV : {(v, \nu_0)_\scV = 0},\; \forall\, \nu_0 \in \kernel\, \scB^{\prime}\}
  .
\end{equation}
One may then decompose any solution
of~\eqref{eq:adjoint_problem} into $\scV$-orthogonal components: 
\begin{equation}
  \label{eq:decomp}
  v = v_0 + v_\perp, \qquad
  v_0 \in \kernel \, \scB^{\prime}, \quad v_\perp \in (\kernel\,\scB^{\prime})_\perp.
\end{equation}
Observe that 
\begin{equation}
  \label{eq:up_low_perp}
  (\kernel \, \scB^{\prime})^\perp = \scR_\scV (\kernel\, \scB^{\prime})_\perp
  .
\end{equation}
Since $F=0$, testing the first equation of~\eqref{eq:Mixed_general}
with $v_0$, we find that what~\eqref{eq:Mixed_general} selects as its
unique solution $v$ is in fact simply $v_\perp.$ 

Returning to the case of general $F$ and $G,$ we collect a few
identities in the next result.
First, note that one may also decompose $F$ into orthogonal components:
\begin{equation}
  \label{eq:Fdecomp}
  F = F^0 + F^\perp, \qquad
  F^0 \in \scR_\scV(\kernel \, \scB^{\prime}), \quad F^\perp \in \scR_\scV(\kernel \, \scB^{\prime})_\perp = (\kernel\,\scB^{\prime})^\perp.
\end{equation}
Second, note that when~\eqref{eq:Bbddbelow}
holds, $\normm{\mu}_{\scU} = \| \scB \mu \|_{\scV^{\prime}}$ generates an
equivalent norm on $\scU$, $\|\scB\inv\|\inv\|\mu\|_\scU \leq \normm{\mu}_{\scU} \leq \|\scB\|\|\mu\|_\scU$, and we may define
\begin{equation}
\label{eq:DualOfEnergyNorm}
  \normm{G}_{\scU^{\prime}} = \sup_{0 \ne \mu \in \scU} 
  \frac{\ip{G,\mu}_{\scU}}{ \normm{\mu}_{\scU}}.
\end{equation}

\begin{proposition}
  \label{prop:identities}
  Suppose $F \in \scV^{\prime}$, $G \in \scU^{\prime}$, $v \in \scV$ and
  $w \in \scU$ solve~\eqref{eq:Mixed_general} and let $v_0$ and
  $v_\perp$ be the unique components of the decomposition of $v$
  in~\eqref{eq:decomp}.
  Similarly, let $F^0$ and $F^\perp$ be the unique components of the decomposition of $F$
  in~\eqref{eq:Fdecomp}. Then the following identities hold:
  \begin{gather}
    \label{eq:identity1}
    \| v_0 \|_\scV^2 + \| \scR_\scV v_\perp + \scB w \|_{\scV^{\prime}}^2
     = \| F \|_{\scV^{\prime}}^2,
    \\
    \label{eq:identity2}
    \| v_0 \|_\scV^2 + \| \scB w\|_{\scV^{\prime}}^2 
    = \| F - \scR_\scV v_\perp\|_{\scV^{\prime}}^2.
  \end{gather}
  Moreover, $v_0 = \scR_\scV\inv F^0$ and
  \begin{gather}
    \label{eq:identityv_0}
    \| v_0 \|_\scV
    = \| F^0 \|_{\scV^\prime}
    ,
    \\
    \label{eq:identityw}
    \| \scB w\|_{\scV^{\prime}} 
    = \| F^\perp - \scR_\scV v_\perp\|_{\scV^{\prime}}.
  \end{gather}
  If in addition, \eqref{eq:Bbddbelow} holds, then for any
  $F \in \scV^{\prime}$, $G \in \scU^{\prime}$, there is a unique $v \in \scV$ and
  $w \in \scU$ satisfying~\eqref{eq:Mixed_general} and 
  the following identities hold:
  \begin{gather}
    \label{eq:identity3}
    \| v_\perp \|_\scV = \normm{G}_{\scU^{\prime}},
    \\
    \label{eq:identity4}
    \| v\|_\scV^2 + \normm{w}_\scU^2 = \| F - \scR_\scV v_\perp
    \|_{\scV^{\prime}}^2 + \normm{G}_{\scU^{\prime}}^2.
  \end{gather}
  If in addition, either $F \in (\kernel\,\scB^{\prime})^\perp$ or $\scB$ is
  a bijection, then $v_0 = 0$ and
  \begin{gather}
    \label{eq:identity5}
    \| v \|_\scV = \normm{G}_{\scU^{\prime}}.
  \end{gather}
\end{proposition}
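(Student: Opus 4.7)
The plan is to reduce every identity to a Pythagorean calculation in $\scV$ or $\scV'$ after identifying the correct orthogonal decompositions. The key preliminary observation is that $\scB w \in (\kernel\,\scB^{\prime})^\perp$: for any $\nu_0 \in \kernel\, \scB^{\prime}$, $\langle \scB w, \nu_0\rangle_\scV = \langle w, \scB^{\prime}\nu_0\rangle_\scU = 0$. Combined with \eqref{eq:up_low_perp}, which says $\scR_\scV(\kernel\,\scB^{\prime})_\perp = (\kernel\,\scB^{\prime})^\perp$, this means that in the orthogonal splitting of $\scV^{\prime}$ as $\scR_\scV(\kernel\,\scB^{\prime}) \oplus (\kernel\,\scB^{\prime})^\perp$, the first equation of \eqref{eq:Mixed_general} already arrives split:
\begin{equation*}
  F \;=\; \scR_\scV v_0 \;+\; (\scR_\scV v_\perp + \scB w),
\end{equation*}
with $\scR_\scV v_0 \in \scR_\scV(\kernel\,\scB^{\prime})$ and $\scR_\scV v_\perp + \scB w \in (\kernel\,\scB^{\prime})^\perp$. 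Since $\scR_\scV$ is an isometric isomorphism $\scV \to \scV^{\prime}$, Pythagoras immediately gives \eqref{eq:identity1}. Uniqueness of the decomposition \eqref{eq:Fdecomp} then forces $F^0 = \scR_\scV v_0$ and $F^\perp = \scR_\scV v_\perp + \scB w$, giving both \eqref{eq:identityv_0} and \eqref{eq:identityw}. Applying Pythagoras a second time to the $\scV'$-orthogonal sum $F - \scR_\scV v_\perp = \scR_\scV v_0 + \scB w$ yields \eqref{eq:identity2}.

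The main obstacle is \eqref{eq:identity3}, which requires turning the inf–sup condition \eqref{eq:Bbddbelow} into the identity $\normm{G}_{\scU^{\prime}} = \|v_\perp\|_\scV$. Here I would use \eqref{eq:Bbddbelow} to conclude that $\scB$ has closed range and then invoke the closed range theorem, together with \eqref{eq:up_low_perp}, to get $\scR_\scV^{-1}\range(\scB) = (\kernel\,\scB^{\prime})_\perp$. Using $\scB^{\prime}v = G$ and the vanishing $\langle \scB\mu, v_0\rangle_\scV = 0$, rewrite
\begin{equation*}
  \normm{G}_{\scU^{\prime}}
  = \sup_{0\ne\mu\in\scU} \frac{\langle \scB\mu, v_\perp\rangle_\scV}{\|\scB\mu\|_{\scV^{\prime}}}
  = \sup_{0\ne\mu\in\scU} \frac{(\scR_\scV^{-1}\scB\mu,\, v_\perp)_\scV}{\|\scR_\scV^{-1}\scB\mu\|_\scV}.
\end{equation*}
As $\mu$ ranges over $\scU$, the vector $\scR_\scV^{-1}\scB\mu$ ranges over the entire subspace $(\kernel\,\scB^{\prime})_\perp$, which contains $v_\perp$, so the supremum is realized by $\mu$ with $\scR_\scV^{-1}\scB\mu$ a positive multiple of $v_\perp$ and equals $\|v_\perp\|_\scV$.

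Existence and uniqueness under \eqref{eq:Bbddbelow} is the standard Babu\v{s}ka–Brezzi result cited in the excerpt. Given that, \eqref{eq:identity4} is a direct consequence: using $\|v\|_\scV^2 = \|v_0\|_\scV^2 + \|v_\perp\|_\scV^2$ and $\normm{w}_\scU^2 = \|\scB w\|_{\scV^{\prime}}^2$, add \eqref{eq:identity2} to the square of \eqref{eq:identity3}. Finally, \eqref{eq:identity5} follows by observing that in either listed scenario one has $F^0 = 0$: if $F \in (\kernel\,\scB^{\prime})^\perp$ this is the definition, and if $\scB$ is bijective then $\scB^{\prime}$ is injective, forcing $\kernel\,\scB^{\prime} = \{0\}$. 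Then \eqref{eq:identityv_0} gives $v_0 = 0$, hence $v = v_\perp$, and \eqref{eq:identity3} closes the argument.
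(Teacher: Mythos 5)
Your proposal is correct and follows essentially the same route as the paper's proof: the same orthogonal splitting of the first equation of \eqref{eq:Mixed_general} along $\kernel\,\scB^{\prime}$ and its complement (you work in $\scV^{\prime}$ where the paper works in $\scV$ via $\scR_\scV\inv$, which is equivalent by the isometry), the same two Pythagorean identities, and the same closed-range/duality computation for \eqref{eq:identity3}, merely read in the opposite direction. The remaining identities are assembled exactly as in the paper, so there is nothing substantive to add.
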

\begin{proof}
  For any $\nu_0 \in \kernel\, \scB^{\prime}$, we have
  $(\scR_\scV\inv \scB w, \nu_0)_\scV = \ip{\scB w, \nu_0}_\scV =
  \ip{\scB^{\prime}\nu_0, w}_\scU=0.$
  Hence $\scR_\scV\inv \scB w$ is in $(\kernel\,\scB^{\prime})_\perp.$
  Therefore, when the first equation of~\eqref{eq:Mixed_general} is
  rewritten as
  \begin{equation}
    \label{eq:two_orth_terms}
    v_0 + (v_\perp + \scR_\scV\inv \scB w) = \scR_\scV\inv F,
  \end{equation}
  an application of the Pythagorean theorem gives~\eqref{eq:identity1}.
  Rewriting~\eqref{eq:two_orth_terms} as
  $v_0 + \scR_\scV\inv \scB w = \scR_\scV\inv F - v_\perp,$ and
  applying the Pythagorean theorem again, we obtain~\eqref{eq:identity2}.
  Rewriting~\cref{eq:two_orth_terms} instead as
  \begin{equation}
    v_0 - \scR_\scV\inv F^0 = \scR_\scV\inv F^\perp -  (v_\perp + \scR_\scV\inv \scB w),
  \end{equation}
  we note that $v_0 = \scR_\scV\inv F^0$ and
  $\scR_\scV\inv F^\perp = v_\perp + \scR_\scV\inv \scB w$, by
  orthogonality.  Equations~\cref{eq:identityv_0,eq:identityw} are now
  obvious.
  
  Next, if~\eqref{eq:Bbddbelow} holds, then standard mixed
  theory~\cite{Boffi2013} gives existence of a unique
  $(v, w ) \in \scV \times \scU$, and $\normm{\cdot}_\scU$ is an
  equivalent norm on $\scU$. 
  To prove~\eqref{eq:identity3}, we begin by noting that the isometry induced by
  $\scR_\scV$ implies 
  \begin{equation}
    \| v_\perp \|_\scV = 
    \sup_{\nu_\perp \in (\kernel\,\scB^{\prime})_\perp}
    \frac{(\nu_\perp, v_\perp)_\scV }{ \| \nu_\perp\|_\scV}
    = 
    \sup_{\nu_\perp \in (\kernel\,\scB^{\prime})_\perp}
    \frac{\ip{ \scR_\scV \nu_\perp, v_\perp}_\scV }{ \| \scR_\scV
      \nu_\perp\|_{\scV^{\prime}}}
    = 
    \sup_{E^\perp \in \scR_\scV (\kernel\,\scB^{\prime})_\perp}
    \frac{\ip{ E^\perp, v_\perp}_\scV }{ \| E^\perp \|_{\scV^{\prime}}}.
  \end{equation}
  Here and throughout, supremums over spaces are only taken over
  nonzero elements of the space.
  Again, from the identity $\range \,\scB = (\kernel \, \scB^{\prime})^\perp$ and \cref{eq:up_low_perp}, we conclude that
  \[
  \| v_\perp \|_\scV
  =
  \sup_{E^\perp \in \range\, \scB}
  \frac{\ip{ E^\perp, v_\perp}_\scV }{ \| E^\perp \|_{\scV^{\prime}}}
  = 
  \sup_{\mu \in \scU}
  \frac{\ip{ \scB \mu, v_\perp}_\scV }{ \| \scB \mu \|_{\scV^{\prime}}} 
  = 
  \sup_{\mu \in \scU}
  \frac{\ip{  \mu, \scB^{\prime}v_\perp}_\scV }{ \normm{ \mu }_\scU}.
  \]
  Thus,~\eqref{eq:identity3} follows after using the second equation
  in~\eqref{eq:Mixed_general}, namely
  $G = \scB^{\prime} (v_0 + v_\perp) = \scB^{\prime} v_\perp$.
  Identity~\eqref{eq:identity4} now follows by squaring both sides
  of~\eqref{eq:identity3} and adding it to~\eqref{eq:identity2}.

  Finally, when $\scB$ is a bijection or
  $F \in (\kernel\,\scB^{\prime})^\perp$, we conclude that $F^0 = 0$.
  Therefore, $v_0=0$ and~\eqref{eq:identity5} follows
  from~\eqref{eq:identity3}.
\end{proof}

Identities like~\eqref{eq:identity4} have often been referred to by
the name {\em hypercircle identities}~\cite{repin2007two} and their
use in {\it a posteriori} error estimation is now standard. We shall
return to this in \cref{sec:a_posteriori_error_control}.


\subsection{Forms and discretization} 
\label{sub:subsection_name}

It is traditional to write mixed systems
using a bilinear form defined by 
\begin{equation}
  \label{eq:b-form-B}
  b(\mu, \nu) = \ip{ \scB \mu, \nu}_\scV  
\end{equation}
for all $\mu \in \scU, \nu \in \scV$. In terms of $b$, 
the mixed problem~\eqref{eq:Mixed_general} is to
find $v \in \scV$ and $w \in \scU$ satisfying
\begin{equation}
  \left\{
    \begin{alignedat}{5}
      &(v, \nu)_\scV
      + 
      b(w, \nu) &&
      = F(\nu)\,,\quad && \forall\, \nu \in \scV,
      \\
      &b(\mu, v) &&= G(\mu)\,,\quad &&
      \forall\, \mu\in\scU.
    \end{alignedat}
  \right.
\label{eq:Mixed-General-form}
\end{equation}
Suppose $b$ arises from a weak formulation of a PDE on a domain
$\om \subset \RRR^d$, which is partitioned into a mesh $\oh$ of
finitely many open connected elements $K$ with Lipschitz boundaries
$\d K$,  such that $\shoverbar{\om}$ is the union of
the closures of all mesh elements $K$ in $\oh$. In this scenario, 
if there are Hilbert spaces $\scV(K)$ on each mesh element $K$ such that 
\begin{equation}
  \label{eq:broken_space}
  \scV = \prod_{K\in \oh} \scV(K),  
\end{equation}
then the system~\eqref{eq:Mixed-General-form}, in the case $G=0$, is
called a {\em DPG formulation}.
In the case $F=0$, it is called a {\em DPG* formulation}. Spaces of the form~\eqref{eq:broken_space} are called
{\em broken spaces} \cite{Carstensen15}.

A discrete method based on~\eqref{eq:Mixed-General-form} would require
a pair of discrete finite-dimensional spaces $\scU_h \subset \scU$ and
$\scV_h \subset \scV$, not necessarily of the same dimension. The
discrete problem would then read as the problem of finding $v_h \in
\scV_h$ and $w_h \in \scU_h$ satisfying 
\begin{equation}
  \left\{
    \begin{alignedat}{5}
      &(v_h, \nu)_\scV
      + 
      b(w_h, \nu) &&
      = F(\nu)\,,\quad && \forall\, \nu \in \scV_h,
      \\
      &b(\mu, v_h) &&= G(\mu)\,,\quad &&
      \forall\, \mu\in\scU_h.
    \end{alignedat}
  \right.
\label{eq:Mixed-General-form-discrete}
\end{equation}
When $\scV$ is a broken space of the form~\eqref{eq:broken_space}, 
$\scV_h$ can be chosen to consist of functions with no continuity
constraints across mesh element interfaces. Then the case $G=0$ delivers
{\em DPG methods} and the case $F=0$ delivers {\em DPG* methods}.  In
both cases, we must typically find $\scV_h$ with
$\text{dim}(\scV_h) > \text{dim}(\scU_h)$ with provable discrete
stability.

A key feature of~\eqref{eq:Mixed-General-form-discrete} is that the
the top left form, $(v, \nu)_\scV$, being an inner product, is always
coercive. Hence the discrete stability
of~\eqref{eq:Mixed-General-form-discrete} is guaranteed solely by a
discrete $\inf$-$\sup$ condition, which is often {\em easy to obtain} in practice
since we can increase $\text{dim}(\scV_h)$ without violating the coercivity of the top
left term. This $\inf$-$\sup$ condition has been analytically
established for various DPG methods through the construction of local
\cite{gopalakrishnan2014analysis,Carstensen15,Nagaraj2015} or global
\cite{carstensen2016low} Fortin operators on generously large test
spaces. The same $\inf$-$\sup$ condition also confirms the stability of the
corresponding DPG* methods. An alternative characterization of the
methods above can be found in a Petrov--Galerkin form in
\cite[Section~4.1]{Keith2017Goal}.

Upon the choice of bases $\{v_i\}$ and $\{w_j\}$ for the discrete spaces $\scV_h$ and $\scU_h$, \cref{eq:Mixed-General-form-discrete} can be identified with the following system of matrix equations:
\begin{equation}
    \begin{bmatrix}
        \sfG\phantom{\T} & \sfB\\
        \sfB\T & 0
    \end{bmatrix}
    \begin{bmatrix}
        \sfv\\
        \sfw
    \end{bmatrix}
    =
    \begin{bmatrix}
        \sff\\
        \sfg
    \end{bmatrix}
    .
\label{eq:Mixed-General-form-matrix}
\end{equation}
Here, $\sfB$ is a rectangular matrix with coefficients determined by the bilinear form, $\sfB_{ij} = b(w_j,v_i)$, and, by conventional notation, $\sfG$ is a Gram matrix governed by the chosen inner product, $\sfG_{ik} = (v_i, v_k)_\scV$.
Naturally, the vectors $\sff_i=F(v_i)$, $\sfg_j=G(w_j)$ are identified with the two loads in \cref{eq:Mixed-General-form-discrete} and the vectors $\sfv$ and $\sfw$ correspond to the coefficients of the chosen basis functions.
In the broken space setting~\eqref{eq:broken_space}, the Gram matrix can be block-diagonal.
In that case, inverting $\sfG$ is computationally feasible and the Schur complement of \cref{eq:Mixed-General-form-matrix} (cf. \cref{eq:normal_equation,eq:normal_equation2}) may be used to solve for the vector $\sfw$ in a much smaller system, independent of $\sfv$:
\begin{equation}
  \sfB\T\sfG\inv\sfB
  \sfw
  =
  \sfB\T\sfG\inv\sff
  -
  \sfg
  \,.
\label{eq:SchurComplement}
\end{equation}
Notice that the DPG stiffness matrix, $\sfB\T\sfG\inv\sfB$, is always symmetric and positive-definite and that after solving for $\sfw$ via~\cref{eq:SchurComplement}, $\sfv$ can always be recovered with only local cost, i.e., $\sfv = \sfG\inv(\sff-\sfB\sfw)$.
Construction of the stiffness matrix $\sfB\T\sfG\inv\sfB$ with broken spaces is considered in detail in \cite{Keith2017Discrete}.


\subsection{Ultraweak formulations} 
\label{sub:ultraweak_formulations}

Many PDEs originate in the following strong form:
\begin{equation}
  \LL u
    =
    f
    \,,
\label{eq:StrongFormulation}
\end{equation}
where $\LL$ is a linear differential operator and $f$ is a prescribed
function.  It is possible to give many general DPG and DPG* formulations for
such operator equations using the framework
of~\cite[Appendix~A]{demkowicz2016spacetime} (which generalizes the
Friedrichs systems framework in~\cite{ErnGuermCapla07,bui2013unified,wieners2016skeleton}).
Let $d, k, m\ge 1$ be integers and let $\om \subseteq \RRR^d$ be a bounded
open set.  We use multiindices $\alpha=(\alpha_1,\ldots \alpha_d)$ of
length $|\alpha| = \alpha_1 + \cdots + \alpha_d \le k$.  Suppose we
are given functions $a_{ij\alpha}: \om \to \RRR$ for each $i = 1, \ldots, l$,
$j = 1, \ldots, m$, and each $|\alpha| \le k$. Let $\LL$ be the
differential operator acting on functions $u: \om \to \RRR^m$ such that
\begin{equation}
  \label{eq:Ldef}
  [\LL u]_i = \sum_{j=1}^m \sum_{|\alpha|\le k} \partial^\alpha (
  a_{ij\alpha} u_j), 
  \qquad
  i\in\{1,\ldots,l\}
  .
\end{equation}
Wherever appropriate, let $L^2$ denote \emph{either} the $l$- or $m$-fold Cartesian product of $L^2(\om)$. Likewise, let $\D$ denote \emph{either} the $l$- or $m$-fold Cartesian product of $\D(\om)$, where $\D(\om)$ is the space of infinitely
differentiable functions that are compactly supported on $\om$ (and
accordingly, $\D^{\prime}$ denotes distributional vector fields). Let $\LL^*$
be the formal adjoint differential operator of $\LL$, i.e., it
satisfies $(\LL \phi, \psi)_{L^2} = (\phi, \LL^* \psi)_{L^2}$ for all
$\phi, \psi \in \D$.
From now on, we will simply denote all such $L^2$-inner products on $\om$ as $(\cdot,\cdot)_\som = (\cdot,\cdot)_{L^2}$.
Likewise, all $L^2$-inner products restricted to a measurable subset $K\subset \Omega$ will be denoted $(\cdot,\cdot)_{\selement}$.

The action of $\LL^*$ on $v: \om \to \RRR^l$ is given by
\begin{equation}
  \label{eq:L*defn}
  [\LL^*v ]_j = \sum_{i=1}^l \sum_{|\alpha| \le k} (-1)^{|\alpha|}
  {a_{ij\alpha}}\, \partial^\alpha v_i,
  \qquad
  j\in\{1,\ldots,m\}
  .
\end{equation}
We assume that the coefficients $a_{ij\alpha}$ are such that both 
$\LL u$ and $\LL^* v$ are well-defined distributions for all $u,v \in L^2$, i.e.,
\begin{subequations}
  \label{eq:UWasm}
\begin{equation}
  \label{eq:LL*assume}
  \text{$\LL u$
    and $ \LL^* v$ are in $\D^\prime$ for all
    $ u,v \in L^2$.}
\end{equation}
(This holds e.g., if $a_{ij\alpha}$ are constant.)

We may now define
Sobolev-like graph spaces by virtue of~\eqref{eq:LL*assume}. On any
nonempty open subset $K \subseteq \om,$ define the Hilbert spaces
$H(\mcL,K) = \{ u \in L^2(K)^m: \LL u \in L^2(K)^l \}$ and, likewise,
$H(\mcL^\ast,K) = \{ v \in L^2(K)^l: \LL^* v \in L^2(K)^m \}.$
(E.g., if we let $\mcL = \grad$, the canonical gradient operator, then $\mcL^\ast = -\div$ and $H(\mcL,K) = H(\grad,K) = H^1(K)$ and $H(\mcL^\ast,K) = \bmH(\div,K)$.)
To simplify notation, we abbreviate $H(\mcL) = H(\mcL,\om)$ and $H(\mcL^\ast) = H(\mcL^\ast,\om)$.
Also define linear operators $\bdryOp: H(\mcL) \to H(\mcL^\ast)^\prime$ and $\bdryOpAdj: H(\mcL^\ast) \to H(\mcL)^\prime$ such that
\begin{equation}
  \ip{\bdryOp u, v}_{H(\mcL^\ast)}
  = (\LL u, v)_\som - (u, \LL^* v)_\som,
  \qquad
  \ip{ \bdryOpAdj v, u}_{H(\mcL)}
  = (\LL^* v, u)_\som - (v , \LL u)_\som,
\end{equation}
for all $u\in H(\mcL)$ and $v \in H(\mcL^\ast)$.
Note that $\bdryOpAdj = -\bdryOp^\prime$, by these definitions.
These graph spaces are equipped with natural \emph{graph norms}:
\begin{equation}
  \|u\|_{H(\mcL)}^2 = \|\mcL u\|_{L^2}^2 + \|u\|_{L^2}^2,
  \qquad
  \|v\|_{H(\mcL^\ast)}^2 = \|\mcL^\ast v\|_{L^2}^2 + \|v\|_{L^2}^2.
\end{equation}
With these norms, notice that both $\bdryOp$ and $\bdryOpAdj$ are bounded.
Indeed, $|\ip{\bdryOp u, v}_{H(\mcL^\ast)}| \leq \|\LL u\|_{L^2} \| v\|_{L^2} + \|u\|_{L^2} \|\LL^* v\|_{L^2} \leq \|u\|_{H(\mcL)}\|v\|_{H(\mcL^\ast)}$.

Finally, we may incorporate homogeneous boundary conditions. 
Recall the definition of the left annihilator in~\eqref{eq:leftann}.
Define $H_0(\mcL)\subset H(\mcL)$ and $H_0(\mcL^\ast)\subset H(\mcL^\ast)$ to be two subspaces satisfying
\begin{equation}
  \label{eq:assumptionVDV}
  H_0(\mcL) = \lprp{\bdryOpAdj(H_0(\mcL^\ast))},
  \qquad 
  H_0(\mcL^\ast) = \lprp{\bdryOp(H_0(\mcL))}.
\end{equation}
\end{subequations}
Observe that~\cref{eq:assumptionVDV} does not uniquely characterize either $H_0(\mcL)$ or $H_0(\mcL^\ast)$.
These definitions permit many different so-called ``mixed'' homogeneous boundary conditions.

We will consider two boundary value problems: Given $f,g \in L^2$,
\begin{subequations}
\begin{align}
  \label{eq:bvp}
  &\text{ find $u \in H_0(\mcL)$ satisfying } 
  \LL u  = f, 
  \\
  \label{eq:bvp*}
  &\text{ find $v\in H_0(\mcL^\ast)$ satisfying } 
  \LL^* v  = g.
\end{align}
\end{subequations}
To derive a broken ``ultraweak formulation'' for~\cref{eq:bvp,eq:bvp*}, we focus on
the scenario where $\om$ is partitioned into a mesh $\oh$ of finitely
many open disjoint elements $K$ such that $\shoverbar{\om}$ is the union of closures
of all mesh elements $K$ in $\oh$.
For functions $u$ and $v$, we denote by $\LL_h u$ and $\LL^* _h v$ the functions obtained by applying $\LL$ and $\LL^* $ to $u|_K$ and $v|_K$, respectively, element by element, for all $K \in \oh$.
With this is mind, define the broken spaces 
\begin{align}
  \label{eq:WhWh*}
  H(\mcL_h)
  &=\prod_{K\in \oh} H(\mcL,K),
  &
  H(\mcL^\ast_h)
  &=\prod_{K\in\oh} H(\mcL^\ast,K)
  ,
\end{align}
which naturally conform to~\eqref{eq:broken_space}.

Clearly, $H(\mcL_h)$ and $H(\mcL_h^\ast)$ are inner product product spaces with corresponding graph norms.
The natural inner products on these spaces, induced by these graph norms, are defined
\begin{equation}
  (u,\tilde{u})_{H(\mcL_h)}
  =
  (\mcL_h u,\mcL_h \tilde{u})_\som + (u,\tilde{u})_\som
  ,
  \qquad
  (v,\tilde{v})_{H(\mcL^\ast_h)}
  =
  (\mcL^\ast_h v,\mcL^\ast_h \tilde{v})_\som + (v,\tilde{v})_\som
  ,
\label{eq:GraphInnerProducts}
\end{equation}
for all $u,\tilde{u} \in H(\mcL_h), v,\tilde{v}\in H(\mcL_h^\ast)$.
Now define the corresponding bounded linear operators $\hbdryOp: H(\mcL_h) \to H(\mcL^\ast_h)^\prime$ and
$\hbdryOpAdj: H(\mcL^\ast_h) \to H(\mcL_h)^\prime$ by
\begin{equation}
  \ip{\hbdryOp u, v}_{H(\mcL_h^\ast)}
  = (\LL_h u, v)_\som - (u, \LL_h^* v)_\som,
  \qquad
  \ip{ \hbdryOpAdj v, u}_{H(\mcL_h)}
  = (\LL_h^* v, u)_\som - (v , \LL_h u)_\som,
\end{equation}
for all $u \in H(\mcL_h), v\in H(\mcL_h^\ast)$.
From now on, when using the operators $\hbdryOp$ and $\hbdryOpAdj$, we will simply denote $\ip{\hbdryOp\cdot,\cdot}_{h} = \ip{\hbdryOp\cdot,\cdot}_{H(\mcL_h^\ast)}$ or, likewise, $\ip{\hbdryOpAdj\cdot,\cdot}_{h} = \ip{\hbdryOpAdj\cdot,\cdot}_{H(\mcL_h)}$, since the meaning can easily be deduced from context.
Finally, let
\begin{align}
  Q(\mcL_h)
    & = \{ p \in H(\mcL_h)^\prime: \text{ there is a $v \in H_0(\mcL^\ast)$ such that } 
      p = \hbdryOpAdj v\},
  \\
    Q(\mcL_h^\ast) & = \{ q \in H(\mcL_h^\ast)^\prime: \text{ there is a $u \in H_0(\mcL)$ such that } 
      q = \hbdryOp u\}.
\end{align}
These are Hilbert spaces when normed by the so-called \textit{minimum energy extension} (quotient) norm, i.e.,
$\| q\|_{Q(\mcL_h^\ast)} = \inf\{ \| u \|_{H(\mcL)}: u\in H(\mcL)$ satisfying $\hbdryOp u = q\}$.

Multiplying~\cref{eq:bvp} by a function $\nu \in H(\mcL_h^\ast)$ and
applying the definition of $\hbdryOp$, we get
$(u, \LL^* _h \nu )_\som + \ip{ \hbdryOp  u, \nu}_h = (f,\nu)_\som$ for all
$\nu$ in $H(\mcL_h^\ast)$.
Setting $\hbdryOp  u$ to $q$, a new unknown in $Q(\mcL_h^\ast)$, we
obtain the following {\em ultraweak formulation} with
$F(\nu) = (f, \nu)_\som$.
Given any $F \in H(\mcL_h^\ast)^\prime,$\, find $u\in L^2$
and $q \in Q(\mcL_h^\ast)$ such that
\begin{subequations}
\begin{equation}
  \label{eq:uwprob}
  (u, \LL^*_h \nu)_\som + \ip{ q,\nu }_h = F(\nu)
  \qquad 
  \forall\, \nu \in H(\mcL_h^\ast).
\end{equation}
Similarly proceeding with~\cref{eq:bvp*} and setting $F(\nu) = (g, \nu)_\som$, we obtain an ultraweak formulation of the dual problem: Given any $F \in H(\mcL_h)^\prime,$\, find $u \in L^2$ and $p \in Q(\mcL_h)$ such that
\begin{equation}
  \label{eq:uwprob*}
  (v, \LL_h \nu)_\som + \ip{ p, \nu }_h = F(\nu)
  \qquad \forall\, \nu \in H(\mcL_h).
\end{equation}
\end{subequations}
The next result shows that~\eqref{eq:uwprob} is uniquely solvable
whenever~\eqref{eq:bvp} is, as well as similar connection
between~\eqref{eq:uwprob*} and~\eqref{eq:bvp*}.

\begin{theorem}[Wellposedness of broken forms]
  \label{thm:wellposed-mesh*}
  Suppose~\eqref{eq:UWasm} holds. Then 
  \begin{enumerate}
  \item Whenever $\LL: H_0(\mcL) \to L^2$ is a bijection,
    problem~\eqref{eq:uwprob} is well-posed.  Moreover, if
    $F(\nu) = (f,\nu)_\som$ for some $f \in L^2,$ then the unique
    solution~$u$ of \eqref{eq:uwprob} is in~$H_0(\mcL)$,
    solves~\eqref{eq:bvp}, and satisfies $q= \hbdryOp u$.

  \item Whenever $\LL^*: H_0(\mcL^\ast) \to L^2$ is a bijection,
    problem~\eqref{eq:uwprob*} is well-posed.  Moreover, if
    $F(\nu) = (g,\nu)_\som$ for some $g \in L^2,$ then the unique
    solution~$v$ of \eqref{eq:uwprob*} is in~$H_0(\mcL^\ast)$,
    solves~\eqref{eq:bvp*}, and satisfies $ p= \hbdryOpAdj u$.
  \end{enumerate}
\end{theorem}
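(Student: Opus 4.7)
The plan is to establish well-posedness via the Banach--Ne\v{c}as--Babu\v{s}ka (BNB) theorem applied to the bilinear form
$b((u, q), \nu) = (u, \LL^\ast_h \nu)_\som + \ip{q, \nu}_h$
on the trial space $L^2 \times Q(\LL^\ast_h)$ and test space $H(\LL^\ast_h)$; boundedness is immediate from Cauchy--Schwarz together with the minimum-energy definition of the norm on $Q(\LL^\ast_h)$. The key technical ingredient is an annihilator lemma asserting that $\nu \in H(\LL^\ast_h)$ satisfies $\ip{q, \nu}_h = 0$ for every $q \in Q(\LL^\ast_h)$ if and only if $\nu \in H_0(\LL^\ast)$. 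The easy direction uses $q = \hbdryOp u$ with $u \in H_0(\LL)$ together with the polarity~\eqref{eq:assumptionVDV}; the converse proceeds in two stages --- probing jumps across each element interface with smooth conforming $u$ to deduce that $\nu$ is actually in $H(\LL^\ast)$, then probing up to the physical boundary to deduce $\nu \in H_0(\LL^\ast)$. The same polarity yields the crucial duality consequence that bijectivity of $\LL: H_0(\LL) \to L^2$ is equivalent to bijectivity of $\LL^\ast: H_0(\LL^\ast) \to L^2$.

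Test-side non-degeneracy of $b$ is then immediate: if $b((u, q), \nu) = 0$ for all $(u, q)$, then varying $u \in L^2$ with $q = 0$ forces $\LL^\ast_h \nu = 0$, varying $q$ with $u = 0$ forces $\nu \in H_0(\LL^\ast)$ by the lemma, and bijectivity of $\LL^\ast$ forces $\nu = 0$. The main obstacle is the trial-side inf-sup. Given $(u, q) \in L^2 \times Q(\LL^\ast_h)$ with $q = \hbdryOp \tilde u$ for a norm-minimizing $\tilde u \in H_0(\LL)$, I would construct a test $\nu = \nu_1 + \nu_2 \in H(\LL^\ast_h)$ with complementary roles. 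The conforming piece $\nu_1 \in H_0(\LL^\ast)$, defined by $\LL^\ast \nu_1 = u$, contributes $\|u\|_{L^2}^2$ to $(u, \LL^\ast_h \nu_1)_\som$ and, by the annihilator lemma, zero to $\ip{q, \nu_1}_h$; the broken correction $\nu_2$ is built element-by-element to realize the minimum-energy extension of $q$, producing $\ip{q, \nu_2}_h \gtrsim \|q\|_{Q(\LL^\ast_h)}^2$, with the cross term $(u, \LL^\ast_h \nu_2)_\som$ absorbed via Young's inequality after a suitable rescaling. With $\|\nu_1\|_{H(\LL^\ast)} \lesssim \|u\|_{L^2}$ (from bijectivity) and $\|\nu_2\|_{H(\LL^\ast_h)} \lesssim \|q\|_{Q(\LL^\ast_h)}$ (by construction), these combine to give a uniform BNB inf-sup constant, and hence existence, uniqueness, and continuous dependence of $(u, q)$ on $F \in H(\LL^\ast_h)^\prime$. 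This step is the delicate one and parallels the Fortin-type broken-space constructions familiar from the DPG literature.

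For the identification claim in the case $F(\nu) = (f, \nu)_\som$, I would test~\eqref{eq:uwprob} sequentially with three classes of test functions. First, with $\nu \in \D$ supported in a single element $K$, the jump pairing $\ip{q, \nu}_h$ vanishes by the distributional definition of $\LL^\ast$, and the equation reduces to $(\LL u, \nu)_K = (f, \nu)_K$ element-wise, so $u \in H(\LL_h)$ with $\LL_h u = f$. Second, with a conforming $\nu \in H_0(\LL^\ast)$, the annihilator lemma kills $\ip{q, \nu}_h$; rewriting via the definition of $\hbdryOp$ yields $\ip{\hbdryOp u, \nu}_h = 0$ for every $\nu \in H_0(\LL^\ast)$, which by~\eqref{eq:assumptionVDV} forces $u \in H_0(\LL)$. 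Third, substituting the just-established $u \in H_0(\LL)$ with $\LL u = f$ back into~\eqref{eq:uwprob} for general $\nu \in H(\LL^\ast_h)$ gives $\ip{q - \hbdryOp u, \nu}_h = 0$ for every $\nu$, whence $q = \hbdryOp u$. Part (2) follows \emph{mutatis mutandis} by interchanging the roles of $\LL$ and $\LL^\ast$; the requisite symmetry is built into~\eqref{eq:assumptionVDV}.
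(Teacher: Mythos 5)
Your argument is correct in outline, but you should know that the paper does not actually prove this theorem: its entire proof is a two-line citation of \cite[Theorem~A.5]{demkowicz2016spacetime} for part~(1), and of the same theorem with $\mcL$ and $\mcL^*$ interchanged for part~(2). What you have written is essentially a reconstruction of that external proof. Your architecture --- BNB for the form $b((u,q),\nu)=(u,\mcL_h^*\nu)_\som+\ip{q,\nu}_h$, the annihilator characterization $\{\nu\in H(\mcL_h^*): \ip{q,\nu}_h=0\ \forall q\in Q(\mcL_h^*)\}=H_0(\mcL^*)$, the transfer of bijectivity from $(\mcL,H_0(\mcL))$ to its $L^2$-adjoint $(\mcL^*,H_0(\mcL^*))$ via the polarity \eqref{eq:assumptionVDV}, and the three-stage identification of $u$ and $q$ when $F(\nu)=(f,\nu)_\som$ --- is the standard broken-space argument, and the paper itself deploys the same ingredients elsewhere: its proof of \cref{thm:wellposed-adj} runs the converse of your annihilator lemma almost verbatim, and the duality identity $\|q\|_{Q(\mcL_h^*)}=\sup_\nu \ip{q,\nu}_h/\|\nu\|_{H(\mcL_h^*)}$ that your inf-sup step really rests on is invoked later in \cref{lem:FluxBounds,lem:TraceBounds} with a citation to \cite[Theorem~2.3]{Carstensen15}. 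The only places your sketch stops short of a complete proof are exactly the two results the paper chooses to import rather than reprove: the construction of the broken test function $\nu_2$ realizing that duality identity, and the intermediate fact that $\ip{\hbdryOp u,\nu}_h=\ip{\bdryOp u,\nu}_{H(\mcL^*)}$ for conforming $u$ and $\nu$ (cf.\ \cite[Lemma~A.3]{demkowicz2016spacetime}), which is needed before the polarity can be applied in both directions of your annihilator lemma. Neither is a wrong idea; your route buys self-containedness at the cost of having to carry out those two lemmas in full, while the paper's buys brevity by outsourcing them.
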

\begin{proof}
  The first statement is exactly the statement of
  \cite[Theorem~A.5]{demkowicz2016spacetime}. The second statement
  also follows from \cite[Theorem~A.5]{demkowicz2016spacetime} when 
  $\LL$ is replaced by $\LL^*$.
\end{proof}

Naturally, formulations~\cref{eq:uwprob,eq:uwprob*} also have adjoints.
For instance, the adjoint of the ultraweak formulation~\cref{eq:uwprob} is the following: Given any $G \in (L^2\times Q(\mcL_h^\ast))^\prime$, find $v \in H(\mcL_h^\ast)$ such that
\begin{subequations}
\begin{equation}
  \label{eq:uwprob_adj}
  (\mu, \LL^*_h v)_\som + \ip{ \rho,v }_h = G(\mu,\rho)
  \qquad \forall\, \mu \in L^2,\, \rho \in Q(\mcL_h^\ast).
\end{equation}
Similarly, the adjoint of~\cref{eq:uwprob*} is: Given any $G \in (L^2\times Q(\mcL_h))^\prime$, find $u \in H(\mcL_h)$ such that
\begin{equation}
  \label{eq:uwprob*_adj}
  (\mu, \LL_h u)_\som + \ip{ \rho,u }_h = G(\mu,\rho)
  \qquad \forall\, \mu \in L^2,\, \rho \in Q(\mcL_h).
\end{equation}
\end{subequations}
Under similar conditions to \cref{thm:wellposed-mesh*}, these variational formulations are also well-posed, as the following theorem demonstrates.

\begin{theorem}[Wellposedness of the adjoint problems]
  \label{thm:wellposed-adj}
  Suppose~\eqref{eq:UWasm} holds. Then 
  \begin{enumerate}
  \item Whenever $\LL: H_0(\mcL) \to L^2$ is a bijection,
    problem~\eqref{eq:uwprob_adj} is well-posed.  Moreover, if
    $G(\mu,\rho) = (g,\mu)_\som$ for some $g \in L^2$, then the unique
    solution~$v$ of \eqref{eq:uwprob_adj} is in~$H_0(\mcL^\ast)$ and
    solves~\eqref{eq:bvp*}.

  \item Whenever $\LL^*: H_0(\mcL^\ast) \to L^2$ is a bijection,
    problem~\eqref{eq:uwprob*_adj} is well-posed.  Moreover, if
    $G(\mu,\rho) = (f,\mu)_\som$ for some $f \in L^2$, then the unique
    solution~$u$ of \eqref{eq:uwprob*_adj} is in~$H_0(\mcL)$ and
    solves~\eqref{eq:bvp}.

  \end{enumerate}
\end{theorem}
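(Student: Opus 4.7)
The plan is to exploit the fact that \cref{eq:uwprob_adj} is precisely the Banach-space dual of \cref{eq:uwprob}, and likewise \cref{eq:uwprob*_adj} is the dual of \cref{eq:uwprob*}. Let $\scB \colon L^2 \times Q(\mcL_h^\ast) \to H(\mcL_h^\ast)^\prime$ be the operator underlying \cref{eq:uwprob}, so that $\scB(u,q)(\nu) = (u,\LL_h^\ast \nu)_\som + \ip{q,\nu}_h$. Then the Banach dual $\scB^\prime \colon H(\mcL_h^\ast) \to (L^2 \times Q(\mcL_h^\ast))^\prime$ acts by $(\scB^\prime v)(\mu,\rho) = (\mu,\LL_h^\ast v)_\som + \ip{\rho, v}_h$, so that \cref{eq:uwprob_adj} is the equation $\scB^\prime v = G$. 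By \cref{thm:wellposed-mesh*}, the assumption that $\LL \colon H_0(\mcL) \to L^2$ is a bijection makes $\scB$ a Banach-space isomorphism; hence $\scB^\prime$ is also an isomorphism, which immediately gives wellposedness of \cref{eq:uwprob_adj}.

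For the moreover claim of item (1), suppose $G(\mu,\rho)=(g,\mu)_\som$. First I would test \cref{eq:uwprob_adj} with $\rho=0$ and arbitrary $\mu \in L^2$, obtaining $(\mu,\LL_h^\ast v)_\som = (g,\mu)_\som$ for every $\mu \in L^2$, so that $\LL_h^\ast v = g$ in $L^2$. Next, testing with $\mu=0$, I would conclude $\ip{\rho,v}_h = 0$ for every $\rho \in Q(\mcL_h^\ast)$, which by the definition of $Q(\mcL_h^\ast)$ translates to $\ip{\hbdryOp u, v}_h = 0$ for every $u \in H_0(\mcL)$.

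Two steps remain: distributional conformity of $v$, and the homogeneous boundary condition. For conformity, I would restrict to $u \in \D$, noting that $\D \subset H_0(\mcL)$ follows from \cref{eq:assumptionVDV} since the formal-adjoint identity makes $(\LL^\ast \nu, \phi)_\som - (\nu, \LL \phi)_\som = 0$ for every $\phi \in \D$ and $\nu \in H_0(\mcL^\ast)$. For such $u$, the orthogonality becomes $(\LL u, v)_\som = (u,\LL_h^\ast v)_\som$; the left-hand side is the distributional pairing $\langle \LL^\ast v, u\rangle$ by the very definition of $\LL^\ast$, while the right-hand side is $\langle \LL_h^\ast v, u\rangle$ since $\LL_h^\ast v \in L^2 \subset \D^\prime$. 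Hence $\LL^\ast v = \LL_h^\ast v \in L^2$ as distributions, i.e., $v \in H(\mcL^\ast)$. With conformity established, for arbitrary $u \in H_0(\mcL)$ the broken form $\ip{\hbdryOp u, v}_h$ coincides with $\ip{\bdryOp u, v}_{H(\mcL^\ast)}$, so $v \in \lprp{\bdryOp(H_0(\mcL))} = H_0(\mcL^\ast)$ by \cref{eq:assumptionVDV}. Together with $\LL^\ast v = g$, this shows that $v$ solves \cref{eq:bvp*}.

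Item (2) follows by an identical argument with the roles of $\LL$ and $\LL^\ast$ interchanged, using the second half of \cref{thm:wellposed-mesh*} in place of the first. The main obstacle I anticipate is the conformity step: one must carefully justify the inclusion $\D \subset H_0(\mcL)$ from the annihilator characterization \cref{eq:assumptionVDV}, and then correctly promote the distributional identity $\LL^\ast v = \LL_h^\ast v$ to global membership in $H(\mcL^\ast)$ before invoking \cref{eq:assumptionVDV} a second time to recover the homogeneous boundary condition.
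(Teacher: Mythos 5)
Your proposal is correct and follows essentially the same route as the paper's proof: identify the operator $\scB$ underlying \eqref{eq:uwprob}, deduce from \cref{thm:wellposed-mesh*} that it is a bijection so that the dual problem is well-posed, then split the test directions, establish $\LL^\ast v = \LL_h^\ast v \in L^2$ by testing against $\D$, and recover the boundary condition from \eqref{eq:assumptionVDV}. The only (harmless) difference is that you supply an explicit justification for the inclusion $\D \subset H_0(\mcL)$, which the paper simply asserts.
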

\begin{proof}
  Both claims are closely related and follow similarly from \cref{thm:wellposed-mesh*}.
  Therefore, we prove only the first statement.

  Let the operator $\scB : L^2 \times Q(\mcL_h^\ast) \to H(\mcL_h^\ast)^\prime$ be defined
  $\ip{ \scB (\mu, \rho), \nu}_{H(\mcL_h^\ast)} = (\mu, \LL^*_h \nu)_\som + \ip{ \rho,\nu }_h$, for all $\nu \in H(\mcL_h)$ and $ (\mu, \rho) \in L^2 \times Q(\mcL_h^\ast)$.
  Recall that $F \in H(\mcL_h^\ast)^\prime = \range\,\scB$ in~\cref{eq:uwprob} was arbitrary.
  Therefore, as a consequence of the first statement in \cref{thm:wellposed-mesh*}, we conclude that $\scB$ is both bounded below (cf.~\cref{eq:Bbddbelow}) \emph{and} surjective.
  That is, $\scB$ is a bijection and, by the Closed Range Theorem, $(\kernel\,\scB^\prime)_\perp = \{0\}$.
  Hence, we conclude that~\cref{eq:uwprob_adj} is well-posed.

  Next, suppose $G((\mu, \rho)) = (g, \mu)_\som$.
  Then~\eqref{eq:uwprob_adj} yields
  \begin{gather}
    \label{eq:LLhu}
    (\mu, \LL_h^\ast v)_\som = (g, \mu)_\som, 
    \\
    \label{eq:rhotu}
    \ip{\rho, v}_h = 0,    
  \end{gather}
  for all $\mu \in L^2$ and $\rho \in Q(\mcL_h^\ast)$.  Equation~\eqref{eq:LLhu}
  yields $\LL_h^\ast v = g$ since $H(\mcL_h^\ast)$ is continuously embedded in $L^2$.
  It remains to show that $v$ is in $H_0(\mcL^\ast)$. Note that for all
  $\phi \in \mcD$, by the distributional definition of $\LL$ and
  the definition of $\hbdryOp$, 
  \[
  \begin{aligned}
  \ip{ \LL^\ast v, \phi}_{\D} 
  & =   (\LL \phi, v)_\som
  = (\LL_h^\ast v, \phi)_\som + \ip{\hbdryOp \phi, v}_h.   
  \end{aligned}
  \] 
  Since $\mcD$ is contained in $H_0(\mcL)$, $\hbdryOp \phi$ is in $Q(\mcL_h^\ast)$ and the last term vanishes by virtue of \eqref{eq:rhotu}.
  Moreover, since $\mcD$ is densely contained in $L^2$, this shows
  that $\LL^\ast v = \LL_h^\ast v = g$.
  Thus $v \in H(\mcL^\ast)$.
  Using \eqref{eq:rhotu} again, observe (cf. \cite[Lemma~A.3]{demkowicz2016spacetime}) that 
  \begin{equation}
    0 = \ip{\rho, v }_h = \ip{\hbdryOp \mu, v }_h = \ip{ \bdryOp \mu, v}_{H(\mcL^\ast)}
    \,,
  \end{equation}
  for all $\rho = \hbdryOp \mu \in Q(\mcL_h^\ast)$, where $\mu \in H_0(\mcL)$.
  Therefore, $v \in \lprp{\bdryOp(H_0(\mcL))}$.
  Finally, $v$ is in $H_0(\mcL^\ast)$ simply by~\eqref{eq:assumptionVDV}.
\end{proof}

Evidently, this result gives a class of examples where DPG* methods can be
formulated.
Letting $\scU = L^2 \times Q(\mcL_h)$ and $\scV = H_0(\mcL_h)$, define the
bilinear form $b: \scU \times \scV \to \RRR$ as follows
\begin{equation}
  \label{eq:b-form-defn}
  b((\mu, \rho), \nu) =   (\mu, \LL_h \nu)_\som + \ip{ \rho,\nu }_h
  \qquad \forall\,(\mu, \rho) \in \scU, \; \nu \in \scV.  
\end{equation}
We may now consider the DPG* formulations of~\cref{eq:bvp}.
Treatment of the dual problem~\cref{eq:bvp*} is similar.


\begin{theorem}[Ultraweak DPG* formulation of~\eqref{eq:bvp}]
  \label{thm:dpgstaruw}  
  Let $(\cdot,\cdot)_{\scV}$ be any inner product on $H(\mcL_h)$ equivalent to $(\cdot,\cdot)_{H(\mcL_h)}$.
  Suppose~\eqref{eq:UWasm} holds, $\LL^*: H_0(\mcL^\ast) \to L^2$ is a bijection,
  and $b$ is as in \eqref{eq:b-form-defn}.  Then, given a
  $G \in (L^2 \times Q(\mcL_h))^\prime$, the problem of finding a function $u \in H(\mcL_h)$
  satisfying
  \begin{equation}
    \label{eq:1uw}
    \left\{
    \begin{aligned}
      & (u, \nu)_\scV \;-\; b( (\lambda, \sigma), \nu) &&= 
      0  
      \quad && \forall\, \nu \in H(\mcL_h),       
      \\
      & b( (\mu, \rho), u) &&= 
      G((\mu, \rho))
      \quad && \forall\, (\mu, \rho) \in L^2 \times Q(\mcL_h),
    \end{aligned}
  \right.
  \end{equation}
  is well-posed.  Moreover, if $G((\mu, \rho)) = (f, \mu)_\som$ for
  some $f \in L^2$, then the unique solution $u$ is in $H_0(\mcL)$ and satisfies
  $\LL u = f$, i.e., $u$ solves \eqref{eq:bvp}.
\end{theorem}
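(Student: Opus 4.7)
The plan is to recognize~\eqref{eq:1uw} as an instance of the abstract saddle point~\eqref{eq:Mixed_general} under the DPG* choice $F=0$, and then to read off both existence/uniqueness and the characterization of the solution from the two wellposedness results proved just above.

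First I would make the matching: let $\scV = H(\mcL_h)$ carry the given equivalent inner product $(\cdot,\cdot)_\scV$, let $\scU = L^2 \times Q(\mcL_h)$, and define $\scB\colon \scU \to \scV^\prime$ by $\ip{\scB(\mu,\rho),\nu}_\scV = b((\mu,\rho),\nu)$.  After the harmless sign substitution $(\lambda,\sigma) \mapsto -(\lambda,\sigma)$, the two lines of~\eqref{eq:1uw} become precisely~\eqref{eq:Mixed_general} with $F=0$ and the prescribed $G$.  Boundedness of $\scB$ (i.e.\ continuity of $b$) is routine: the first summand of $b$ is controlled by Cauchy--Schwarz against the graph norm on $H(\mcL_h)$, while the second is controlled by the defining embedding $Q(\mcL_h) \subset H(\mcL_h)^\prime$.

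The only nontrivial step is to verify the bounded-below hypothesis~\eqref{eq:Bbddbelow} for $\scB$.  Here I would invoke \cref{thm:wellposed-mesh*}(2) directly: the assumption that $\LL^*\colon H_0(\mcL^*) \to L^2$ is a bijection asserts that the ultraweak formulation~\eqref{eq:uwprob*}, whose defining operator is exactly $\scB$, is wellposed.  Consequently $\scB$ is a Banach-space isomorphism from $\scU$ onto $\scV^\prime$, which is equivalent to~\eqref{eq:Bbddbelow}; replacing the graph inner product by any equivalent $(\cdot,\cdot)_\scV$ only affects the inf-sup constants, not the structural condition.  With~\eqref{eq:Bbddbelow} in hand, \cref{prop:identities} (equivalently, standard mixed-method theory \cite{Boffi2013}) produces a unique solution pair $(u,(\lambda,\sigma))$ of~\eqref{eq:1uw} for every $G \in \scU^\prime$.

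For the ``Moreover'' assertion, I would specialize to $G((\mu,\rho)) = (f,\mu)_\som$ with $f \in L^2$ and observe that the second equation of~\eqref{eq:1uw} is literally~\eqref{eq:uwprob*_adj}.  Thus the $u$-component of the DPG* solution must also be the unique solution of~\eqref{eq:uwprob*_adj} for this particular $G$; by \cref{thm:wellposed-adj}(2) it then lies in $H_0(\mcL)$ and satisfies $\LL u = f$, i.e.\ solves~\eqref{eq:bvp}.  I do not anticipate a serious obstacle: the only real work is cleanly identifying~\eqref{eq:1uw} inside the abstract template with the correct sign conventions and inner product, after which both halves of the theorem follow by direct appeal to the previously established results.
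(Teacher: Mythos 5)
Your proposal is correct and follows essentially the same route as the paper: identify \eqref{eq:1uw} with the abstract saddle point \eqref{eq:Mixed_general}/\eqref{eq:dpg*A}, deduce that $\scB$ is a bijection (hence \eqref{eq:Bbddbelow}) from the wellposedness of the ultraweak formulation \eqref{eq:uwprob*} guaranteed by \cref{thm:wellposed-mesh*}, conclude unique solvability from mixed theory (\cref{prop:identities}), and obtain the ``Moreover'' part by reading the second equation of \eqref{eq:1uw} as \eqref{eq:uwprob*_adj} and applying \cref{thm:wellposed-adj}(2). This matches the paper's proof step for step, so nothing further is needed.
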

\begin{proof}
  Define the operator
  $B : H(\mcL_h) \to (L^2 \times Q(\mcL_h))^\prime$, by
  $\ip{ B\nu, (\mu, \rho)}_{L^2 \times Q(\mcL_h)} = b( (\mu, \rho),
  \nu)$ for all $(\mu, \rho) \in L^2 \times Q(\mcL_h)$ and
  $\nu \in H(\mcL_h)$.  As in the proof of~\cref{thm:wellposed-adj},
  the operator $\scB = B^\prime$ is a bijection.  Hence,~\cref{eq:1uw}
  is a problem of form~\cref{eq:Mixed_general} (also~\cref{eq:dpg*A})
  with $\scB$ satisfying~\cref{eq:Bbddbelow} and we conclude
  that~\eqref{eq:1uw} has a unique solution (see,
  e.g.,~\cref{prop:identities}).  The remaining statements immediately
  follow from \cref{thm:wellposed-adj}.
\end{proof}

\begin{remark}
  Note that the notation in~\cref{thm:dpgstaruw} expresses the DPG* solution $u\in \scV$ and $(\lambda,\sigma) \in\scU$ in the form of~\cref{eq:dpg*A}.
  That is, the first solution component is denoted by the symbol $u$.
  From now on, in order to more closely follow the abstract notation used in~\cref{eq:Mixed_general}, we will only use the symbol $v$ for the $\scV$-solution in all DPG* problems.
\end{remark}

\begin{remark}
  
  A very general broken space theory applicable of both DPG and DPG* methods has been established in the literature.
  This theory encompasses more traditional weak formulations and has been applied to a wide variety different boundary value problems \cite{demkowicz2013primal,broersen2015petrov,Carstensen15,Keith16,fuentes2016coupled}.
  For brevity, we will not expand on the intricate details here, but simply act to remind the reader that ultraweak variational formulations are not a prerequisite for any DPG-type method coming from~\cref{eq:Mixed-General-form-discrete}.

\end{remark}

\begin{example}[Poisson equation]
  \label{eg:poisson}
  In this example, which resurfaces throughout the document, $\bmv=(\bmp, v)$ will denote the DPG* solution variable.
  Similarly, $\blambda = (\bzeta, \lambda, \hat{\zeta}_n, \hat\lambda)$ will come to denote the associated Lagrange multiplier (see \cref{sub:Application_to_the_Poisson_example}).

  On a bounded open set $\om \subseteq \RRR^d$ with connected Lipschitz
  boundary, set $m=d+1$ and 
  \begin{equation}
    \LL (\bmp, v) = ( \bmp - \grad v, -\div \bmp)
    \,,
  \label{eq:DefinitionOfLPoisson}
  \end{equation}
  where $\bmp: \om \to \RRR^d$ represents flux variable and
  $v:\om \to \RRR$ represents solution variable.  Note that the
  equation $\LL(\bmp, v) = (\vec{0}, f)$, after elimination of $\bmp$, results in
  the well-known Poisson equation $-\Delta v = f$.

  We want to write
  out the DPG* formulation studied in \cref{thm:dpgstaruw} for
  this $\LL$. Begin by observing that $\LL^*$ given by~\eqref{eq:L*defn}
  can be written as
  \begin{equation}
    \label{eq:L*_Poisson}
    \LL^* (\bsigma, \mu)  = ( \bsigma + \grad \mu, \div \bsigma).
  \end{equation}
  Obviously~\eqref{eq:LL*assume} is satisfied. By the triangle
  inequality, we immediately see that both $H(\mcL)$ and $H(\mcL^\ast)$ in this case
  coincide with $\bmH(\div,\om) \times H^1(\om)$. By integration by
  parts, 
  \begin{align}
    \label{eq:DDtPoisson}
    \ip{\bdryOpAdj(\bsigma,\mu),(\bmp,v)}_h
    = \ip{ \bsigma\cdot \bmn, v}_{H^{1/2}(\partial \om)}
    + \ip{\bmp\cdot \bmn, \mu}_{H^{1/2}(\partial \om)},
  \end{align}
  where $\bmn$ denotes the unit outward normal on $\d\om$.
  Put 
  \begin{equation}
    \label{eq:V_Vt_Poisson}
    H_0(\mcL^\ast) = \bmH(\div, \om) \times H^1_0(\om).
  \end{equation}
  This choice corresponds to the Dirichlet problem, $v=0$ on $\partial\Omega$, as we shall see.
  From~\eqref{eq:DDtPoisson}, it is immediate
  that~\eqref{eq:assumptionVDV} holds.   
  Along the lines of~\eqref{eq:DDtPoisson}, we also have 
  \begin{align}
    \label{eq:DDtPoisson-h}
    \ip{\hbdryOpAdj(\bsigma,\mu),(\bmp,v)}_h 
    & = 
      \sum_{K \in \oh} 
      \bigg[\ip{ \bsigma\cdot \bmn, v}_{H^{1/2}(\partial K)}
      + \ip{\bmp\cdot \bmn, \mu}_{H^{1/2}(\partial K)}
      \bigg].
  \end{align}
  The range of $\hbdryOpAdj|_{H_0(\mcL^\ast)}$ is $Q(\mcL_h)$. In this
  example, this can characterized using standard trace operators.  The
  domain-dependent trace operators $\tr^\selement u = u|_{\bdry K}$
  and
  $\tr_{n}^\selement \vec{\sigma} = \vec{\sigma}|_{\bdry K}\cdot\bmn$
  for smooth functions are well-known to be continuously extendable to
  bounded linear maps
  $\tr^{\selement}:H^1(K)\to H^{\onehalf}(\bdry K)$ and
  $\tr^{\selement}_n:H(\div,K)\to H^{\minusonehalf}(\bdry K)$.  Let
  $\tr=\prod_{K\in\mesh}\tr^\selement$ and
  $\tr_{n}=\prod_{K\in\mesh}\tr_{n}^\selement$.  Then set 
  that
  \begin{equation}
    \label{eq:InterfaceSpaces}
    H^{\minusonehalf}(\partial\oh) = \tr_n( H(\div, \om)), 
    \qquad
    H^{\onehalf}_0(\partial\oh) = \tr( H_0^1(\om)).
  \end{equation}
  Clearly,
  $ Q(\mcL_h) = H^{\minusonehalf}(\partial\oh)\times
  H^{\onehalf}_0(\partial\oh)$.  
  
  Applying the abstract setting with these definitions, the DPG*
  bilinear form in~\eqref{eq:b-form-defn}, for this example becomes
  \begin{equation}
    \label{eq:DPG*Poissonform}
    \begin{aligned}
    b( (\bsigma, \mu, \hat{\sigma}_n, \hat{\mu}), (\btau, \nu)) 
    & = \sum_{K \in \oh}
    \bigg[
      (\bsigma, \btau - \grad \nu)_{\selement} - (\mu, \div \btau)_{\selement}
    \bigg]
    \\
    & +\sum_{K \in \oh} 
    \bigg[ \ip{ \btau\cdot \bmn, \hat \mu}_{H^{1/2}(\d K)}
    + \ip{ \hat \sigma_n,  \nu}_{H^{1/2}(\d K)}
    \bigg].
    \end{aligned}
  \end{equation}
  Here, $(\cdot, \cdot)_{\selement}$ denotes the inner product in $L^2(K)$ or
  its Cartesian products, $\bsigma \in L^2(\om)^d$,
  $\mu \in L^2(\om)$, $(\hat\sigma_n, \hat\mu)$ is in the space $Q(\mcL_h)$
  defined above, and the solution variable $(\bsigma, \mu)$ is in the broken space
  $H(\mcL_h) = \bmH(\div,\oh) \times H^1(\oh),$ where
  \begin{equation}
    H(\div, \oh) = \prod_{K\in \oh} \bmH(\div, K), \qquad 
    H^1(\oh) = \prod_{K \in \oh} H^1(K).
  \label{eq:H1andHdivBroken}
  \end{equation}
  Finally, the bijectivity of $\LL: H_0(\mcL) \to L^2$ can be proved by
  standard techniques (see e.g.~\cite{demkowicz2011analysis}). Hence
  \cref{thm:dpgstaruw} yields that this DPG* formulation is
  well posed.
  
  We shall revisit this example later. In order to shorten the
  notation for later discussions, we shall denote
  \[
    (\grad_h \mu, \vec{\sigma})_\som + (\mu, \div_h \vec{\sigma})_\som
  \]
  by $\ip{\mu, \vec\sigma \cdot \vec n}_h$ or
  $\ip{\vec\sigma \cdot \vec n, \mu}_h$ without explicitly indicating
  the application of the trace maps $\tr_n$ and $\tr$. Accordingly,
  the bilinear form in~\eqref{eq:DPG*Poissonform} may be abbreviated  as
  $ b( (\bsigma, \mu, \hat{\sigma}_n, \hat{\mu}), (\btau, \nu)) =
  (\bsigma, \btau - \grad \nu)_{\om} - (\mu, \div \btau)_{\om} + \ip{
    \btau\cdot \bmn, \hat \mu}_h + \ip{ \hat \sigma_n, \nu}_h.$ 
  \hfill$\Box$
\end{example}

\subsection{Related methods} 
\label{sub:related_methods}

Let $\scV = H(\mcL_h^\ast)$.
For any $F\in H(\mcL_h^\ast)^\prime$, the ultraweak DPG formulation defined by \cref{eq:1uw} can be restated as the following system of variational equations:
\begin{subequations}
\begin{equation}
    \left\{
    \begin{alignedat}{5}
      & (\veps, \nu)_\scV \;+\; (u, \LL_h^\ast \nu)_\som + \ip{ p,\nu }_h &&=
      F(\nu)\,,
      \quad && \forall\, \nu \in H(\mcL_h^\ast),       
      \\
      & (\mu, \LL_h^\ast \veps)_\som &&=
      0\,,
      \quad && \forall\, \mu \in L^2,
      \\
      & \ip{ \rho,\veps }_h &&=
      0\,,
      \quad && \forall\, \rho \in Q(\mcL_h^\ast).
    \end{alignedat}
  \right.
\label{eq:DPGformulationExpanded}
\end{equation}
Likewise, letting $\scV = H(\mcL_h)$, an ultraweak DPG formulation corresponding to~\cref{eq:1uw} may be defined for any $G = G_\om \times G_h\in(L^2\times Q(\mcL_h))^\prime$:
\begin{equation}
    \left\{
    \begin{alignedat}{5}
      & (v, \nu)_\scV \;-\; (\lambda, \LL_h \nu)_\som - \ip{ \sigma,\nu }_h &&=
      0\,,
      \quad && \forall\, \nu \in H(\mcL_h),       
      \\
      & (\mu, \LL_h v)_\som &&=
      G_\om(\mu)\,,
      \quad && \forall\, \mu \in L^2,
      \\
      & \ip{ \rho,v }_h &&=
      G_h(\rho)\,,
      \quad && \forall\, \rho \in Q(\mcL_h).
    \end{alignedat}
  \right.
\label{eq:DPG*formulationExpanded}
\end{equation}
\end{subequations}
Both of the formulations defined above relate to the primal problem
\cref{eq:bvp} with $u=v$.  Clearly, the role of $\mcL_h$ and
$\mcL_h^\ast$ can be interchanged if a solution of the dual problem
\cref{eq:bvp*} is of interest.

The link between DPG and least-squares methods is well established in
the literature (see e.g. \cite{Keith2017Discrete}).  DPG* methods, as
it turns out, can be readily identified with the category of so-called
$\mcL\mcL^\ast$ methods \cite{cai2001first}.  In this subsection, we
briefly illustrate this and a couple of other notable relationships in
the context of the mixed problems introduced in
\cref{sub:operator_equations}.

\subsubsection{Least-squares methods} 
\label{ssub:_mclmcl_ast_methods}

Let $\scV = L^2$ and $\scU = H_0(\mcL)$.
It is well-known that least-squares finite element methods \cite{bochev2009least} follow from the following saddle-point formulation (cf.~\cref{eq:dpgA,eq:DPGformulationExpanded}):
\begin{equation}
  \left\{
    \begin{alignedat}{5}
      &(\veps, \nu)_\som  + (\mcL\fku,\nu)_\som &&= F(\nu)\,,\quad &&\forall\, \nu\in L^2\,, \\
      &(\mcL\mu,\veps)_\som &&= 0\,,\quad &&\forall\, \mu\in H_0(\mcL)\,.
    \end{alignedat}
  \right.
\label{eq:LeastSquares}
\end{equation}
This may be identified with a mixed problem akin to~\cref{eq:dpgA} using the strong formulation of~\cref{eq:bvp}, rather than the ultraweak formulation, as in~\cref{eq:DPGformulationExpanded}.
Indeed, let $\Riesz_{L^2}$ be the $L^2$ Riesz operator appearing in each term in~\cref{eq:LeastSquares} and recall identity~\cref{eq:normal_equation}, where $(\scB\mu)(\cdot) = (\mcL\mu,\cdot)_\som$.
Then observe that $\scB^\prime\Riesz_\scV\inv\scB = (\Riesz_{L^2}\mcL)^\prime\Riesz_{L^2}\inv(\Riesz_{L^2}\mcL) = \mcL^\prime\Riesz_{L^2}\mcL$ and $\scB^\prime\Riesz_\scV\inv F = \mcL^\prime F$.
That is,
\begin{equation}
  \langle \scB^\prime\Riesz_\scV\inv\scB\fku,\mu\rangle_\scU
  =
  \langle\scB^\prime\Riesz_\scV\inv F,\mu\rangle_\scU
  \qquad
  \iff
  \qquad
  (\mcL\fku,\mcL\mu)_\som
  =
  F(\mcL\mu)
  \,.
\end{equation}
In the case $F(\cdot) = (f,\cdot)_\som$, observe that $F(\mcL\nu) = (f,\mcL\nu)_\som$.
Therefore, the variational equation above can be readily identified with the first-order optimality condition for the functional $\mcJ:\fku\mapsto \|\mcL\fku-f\|_{L^2}^2$, $\partial_\fku\mcJ = 0$.

\subsubsection{\texorpdfstring{$\mcL\mcL^\ast$}{LL*} methods} 

Let $\scV = L^2$ and $\scU = H_0(\mcL^\ast)$.
Contrary to \cref{eq:LeastSquares}, so-called $\mcL\mcL^\ast$ methods \cite{cai2001first} relate to the following system (cf.~\cref{eq:dpg*A,eq:DPG*formulationExpanded}):
\begin{equation}
  \left\{
    \begin{alignedat}{5}
      &(\fkv,\nu)_\som  - (\mcL^\ast\lambda,\nu)_\som &&= 0\,,\quad &&\forall\, \nu\in L^2\,, \\
      &(\mcL^\ast\mu,\fkv)_\som &&= G(\mu)\,,\quad &&\forall\, \mu\in H_0(\mcL^\ast)\,.
    \end{alignedat}
  \right.
\label{eq:LLstar}
\end{equation}
Likewise, consider \cref{eq:normal_equation2}, where $(\scB\mu)(\cdot) = (\mcL^\ast\mu,\cdot)_\som$ and $G(\cdot) = (f,\cdot)_\som$.
In this case, we see that $\mcL\mcL^\ast$ formulations may again be identified with \cref{eq:bvp}, in this case using a saddle-point expression akin to~\cref{eq:dpg*A}.
Indeed, observe that
\begin{equation}
  \langle \scB^\prime\Riesz_\scV\inv\scB\lambda,\mu\rangle_\scU
  =
  \langle G,\mu\rangle_\scU
  \qquad
  \iff
  \qquad
  (\mcL^\ast\lambda,\mcL^\ast\mu)_\som
  =
  (f,\mu)_\som
  \,.
\end{equation}
The variational equation above indicates, in a weak sense, that $\mcL\mcL^\ast\lambda = f$.
Recalling that the solution is determined by the transformation $\fkv = \Riesz_\scV\inv\scB\lambda = \mcL^\ast\lambda$, we have $\mcL\fkv = f$ weakly, as well.

\subsubsection{Weakly conforming least-squares methods} 
\label{ssub:weakly_conforming_least_squares_methods}

A weakly conforming least squares method \cite{Ernesti2017} for the primal problem \eqref{eq:bvp} seeks a minimizer of the least squares functional
\begin{equation}
  \fkw\mapsto\Vert \mcL \fkw - f \Vert^2_{L^2}\, ,
\end{equation}
under the conformity constraint
\begin{equation}
  \langle \fkw, \rho \rangle_h = 0\,, \quad \forall\, \rho \in Q(\mcL_h)
  \,.
\end{equation}
Here, of course, the operator $\mcL$ is understood element-wise so we may saliently replace it by $\mcL_h$.
This leads to the following saddle-point problem for the two solution components $\fkw$ and $\sigma$:
\begin{equation}
  \left\{
    \begin{alignedat}{5}
      &(\mcL_h\fkw, \mcL_h \nu)_\som  + \langle \sigma, \nu \rangle_h &&= (f, \mcL_h \nu)_\som\,,\quad &&\forall\, \nu \in H(\mcL_h)\,, \\
      &\langle \fkw, \rho \rangle_h &&= 0\,,\quad &&\forall\, \rho \in Q(\mcL_h)\,.
    \end{alignedat}
  \right.
\label{eq:WeaklyConformingLeastSquares}
\end{equation}
If we use an ultraweak DPG* formulation \cref{eq:DPG*formulationExpanded} with its corresponding graph inner product \cref{eq:GraphInnerProducts}, scaled by an arbitrary constant $\alpha>0$, we arrive at
\begin{equation}
  \left\{
    \begin{alignedat}{5}
      &(\mcL_h  v,\mcL_h \nu)_\som + \alpha ( v,\nu)_\som  - (\lambda,\mcL_h\nu)_\som  - \langle \sigma,\nu \rangle_h &&= 0\,,\quad && \forall\, \nu \in H(\mcL_h)\,, \\
      &(\mu, \mcL_h v)_\som &&= ( f, \mu )_\som\,,\quad && \forall\, \mu \in L^2\,,  \\
      &\langle  v, \rho\rangle_h &&= 0\,,\quad && \forall\, \rho \in Q(\mcL_h)\,.
    \end{alignedat}
  \right.
\label{eq:DPG*ultraweak}
\end{equation}
From the second equation in \cref{eq:DPG*ultraweak}, observe that $f = \mcL_h v$.
Therefore, the first equation can be rewritten as
\begin{equation}
  (f-\lambda,\mcL_h \nu)_\som + \alpha ( v,\nu)_\som  = \langle \sigma,\nu \rangle_h \,,\quad \forall\, \nu \in H(\mcL_h)\,.
\end{equation}
Testing only with $\nu\in H(\mcL)$, so that the term $\langle \sigma,\nu \rangle_h$ vanishes, it can now be seen that $\lambda \to f$ as $\alpha \to 0$. 
Consequently, this particular DPG* formulation can be viewed as a regularization of the weakly conforming least-squares formulation \cref{eq:WeaklyConformingLeastSquares}.

\subsection{Solving the primal and dual problems simultaneously} 
\label{sub:jointly_solving_for_the_primal_and_dual_solutions}

In \cref{eq:Mixed_general}, we may hypothetically consider any $F\in\scV^\prime$ and $G\in\scU^\prime$ we wish:
\begin{equation}
  \left\{
    \begin{alignedat}{3}
      \Riesz_\scV &\fkv + \scB w &&= F \,, \\
      \scB^\prime &\fkv &&= G \, .
    \end{alignedat}
  \right.
\label{eq:Mixed_general2}
\end{equation}
Let $\scB$ to be an isomorphism and define $F = \Riesz_\scV(\scB^\prime)\inv G + \ell$, for some fixed $\ell \in (\kernel\,\scB^\prime)^\perp = \scV^\prime$.\footnote{In the case of an injective but \textit{not} surjective $\scB$, consider $F = \scB(\scB^\prime\Riesz_\scV\inv\scB)\inv G + \ell \in (\kernel\,\scB^\prime)^\perp\subsetneq \scV^\prime$.}
Noting that $\fkv = (\scB^\prime)\inv G$, by the second equation in \cref{eq:Mixed_general2}, it is readily seen that $\scB w = \ell$.
Therefore, with this choice of loads, $w=u$ solves the primal problem \cref{eq:Au=f} and $\fkv$ solves the dual problem \cref{eq:adjoint_problem}, simultaneously.

Introducing the load $F$, as proposed above, involves the inversion the linear operator $\scB^\prime$.
In practice, this is usually not feasible and, therefore, precludes the construction of any such load in most circumstances.
Nevertheless, consider the following system of equations:
\begin{equation}
  \left\{
    \begin{alignedat}{5}
      &(\mcL^\ast  v,\mcL^\ast \nu)_\som + ( w,\mcL^\ast\nu)_\som  &&= (g,\mcL^\ast \nu)_\som + (f,\nu)_\som\,,\quad && \forall\, \nu \in H(\mcL^\ast)\,, \\
      &(\mcL^\ast v, \mu)_\som &&= (g,\mu )_\som\,,\quad && \forall\, \mu \in L^2\,.
    \end{alignedat}
  \right.
\label{eq:PrimalDualDPG}
\end{equation}
This corresponds to a system like \cref{eq:Mixed_general2} with $\scV = H(\mcL^\ast)$ and $\scU = L^2$.
In \cref{eq:PrimalDualDPG}, $ v$ clearly satisfies $(\mcL^\ast v, \mu)_\som = ( g, \mu )_\som$.
That is, $ v$ solves the dual problem~\cref{eq:bvp*}, $\mcL^\ast v = g$, in a strong sense.
Substituting $\mu = \mcL^\ast\nu$ into \cref{eq:PrimalDualDPG} and canceling terms in the first equation, we immediately find that $( w,\mcL^\ast\nu)_\som  = (f,\nu)_\som$.
That is, $w=u$ solves the primal problem~\cref{eq:bvp}, $\mcL u = f$, in the ultraweak sense.

To avoid solving the mixed problem for both $ v$ and $w$ at the same time, upon discretization, broken test spaces spaces can be used.
In this setting, we must consider the following related system with solution $( w,\sigma)\in \scU = L^2\times Q(\mcL^\ast_h)$ and $ v\in \scV = H(\mcL^\ast_h)$:
\begin{equation}
  \left\{
    \begin{alignedat}{5}
      &(\mcL_h^\ast  v,\mcL_h^\ast \nu)_\som + \alpha ( v,\nu)_\som  + ( w,\mcL_h^\ast\nu)_\som  + \langle \sigma,\nu \rangle_h &&= (g,\mcL_h^\ast \nu)_\som + (f,\nu)_\som\,,\quad\!\! && \forall\, \nu \in H(\mcL^\ast_h), \\
      &(\mcL_h^\ast v, \mu)_\som &&= (g,\mu )_\som,\quad && \forall\, \mu \in L^2,  \\
      &\langle  v, \rho\rangle_h &&= 0,\quad && \forall\, \rho \in Q(\mcL^\ast_h).
    \end{alignedat}
  \right.
\label{eq:PrimalDualDPGbroken}
\end{equation}
The consequent manipulations are inspired by \cite[Lemma~7]{fuhrer2017superconvergent}.
First, notice that the last two equations in \cref{eq:PrimalDualDPGbroken} uniquely determine $ v$.
Therefore, after testing the middle equation with $\mu = \mcL_h^\ast\nu$, observe that the first equation can be rewritten
\begin{equation}
  ( w,\mcL_h^\ast\nu)_\som  + \langle \sigma,\nu \rangle_h
  =
  (f,\nu)_\som - \alpha ( v,\nu)
  \,.
\end{equation}
By linearity, $( w,\sigma)_\som = ( u,q)_\som + \alpha (e,r)$, where $( u,q)\in\scU$ solves the ultraweak primal problem $( u,\mcL_h^\ast\nu)_\som  + \langle q,\nu \rangle_h = (f,\nu)_\som$ (cf. \cref{eq:bvp}) and $(e,r) = (e(v),r(v))$ is a \emph{pollution term} defined by the equation $(e,\mcL_h^\ast\nu)_\som  + \langle r,\nu \rangle_h = -( v,\nu)$.
Clearly, $ w\to u$ as $\alpha\to 0$.


\section{A priori error analysis} 
\label{sec:a_priori_error_analysis}

\subsection{General results}

Having explained the connections between the DPG* method and the mixed
formulation~\eqref{eq:Mixed_general}, it should not be a surprise that
its error analysis reduces to standard mixed theory. To state the
result, let $v \in \scV$ and $\lambda \in \scU$ satisfy 
\begin{equation}
    \left\{
    \begin{alignedat}{5}
      &(v, \nu)_\scV - b(\lambda, \nu) &&= 0\,,\quad &&  \nu \in \scV,
      \\
      &b(\mu, v) &&= G(\mu)\,,\quad &&  \mu\in\scU.
    \end{alignedat}
  \right.
\label{eq:DPGstarExact}
\end{equation}
The DPG* approximation  $(v_h, \lambda_h) \in \scV_h \times \scU_h$
satisfies
\begin{equation}
    \left\{
    \begin{alignedat}{5}
      &(v_h, \nu)_\scV - b(\lambda_h, \nu) &&= 0\,,\quad &&  \nu  \in\scV_h,
      \\
      &b(\mu, v_h) &&= G(\mu)\,,\quad &&  \mu\in\scU_h.
    \end{alignedat}
  \right.
\label{eq:DPGstarApprox}
\end{equation}
We assume that $b(\cdot, \cdot) : \scV \times \scU \to \RRR$ is
generated by a bounded linear operator $\scB$ (as in~\eqref{eq:b-form-B}) that
satisfies~\eqref{eq:Bbddbelow}.  The only further assumption we need
for error analysis is the existence of a Fortin operator. Namely,
we assume that there is a continuous linear operator
$\vpi_h: \scV \to \scV_h$ such that
\begin{equation}
  \label{eq:Fortin}
  b( \mu, \nu - \vpi_h \nu) = 0, \qquad \mu \in \scU_h, \; \nu \in \scV.  
\end{equation}
Under these assumptions, the standard theory of mixed methods
\cite{Boffi2013} yields the following {\it a priori} estimate.

\begin{theorem}
  \label{thm:apriori}
  Suppose~\eqref{eq:Bbddbelow} and~\eqref{eq:Fortin} hold.  Then there
  is a constant $C$ such that the complete DPG* solution $(v,\lambda)\in\scV\times\scU$ satisfies the error
  estimate
  \[
    \| v - v_h \|_\scV + \| \lambda - \lambda_h \|_\scU
    \le 
    C 
    \bigg[
    \inf_{\nu \in \scV_h} 
    \| v - \nu \|_\scV +
    \inf_{\mu \in \scU_h} \| \lambda - \mu \|_\scU
    \bigg].
  \]  
\end{theorem}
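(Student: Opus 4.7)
The plan is to recognize \eqref{eq:DPGstarExact}--\eqref{eq:DPGstarApprox} as an instance of a classical mixed saddle-point problem in Brezzi form and then to verify the two Brezzi conditions at the discrete level. Once both are in place, the textbook quasi-optimality result (see \cite{Boffi2013}) delivers the stated estimate.

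The first Brezzi condition, namely coercivity of the top-left form on the discrete kernel, is essentially free here: the form appearing in \eqref{eq:DPGstarExact} and \eqref{eq:DPGstarApprox} is the full $\scV$-inner product $(\cdot,\cdot)_\scV$, which is coercive on all of $\scV$ with constant $1$, hence on any subspace $\scV_h \subset \scV$. No restriction to a kernel is needed, which is one of the structural advantages emphasized in \cref{sub:subsection_name}.

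The main step is the second Brezzi condition, the discrete inf-sup condition
\begin{equation*}
  \inf_{\mu \in \scU_h}\,\sup_{\nu \in \scV_h}\,
  \frac{b(\mu,\nu)}{\|\mu\|_\scU\,\|\nu\|_\scV} \;\ge\; \gamma_h > 0,
\end{equation*}
with a constant $\gamma_h$ independent of $h$. This is where the Fortin operator does its job. Starting from the continuous inf-sup provided by \eqref{eq:Bbddbelow} and the definition \eqref{eq:b-form-B}, for any $\mu \in \scU_h \subset \scU$ there exists $\nu \in \scV$ with
\begin{equation*}
  b(\mu,\nu) \;\ge\; \gamma\,\|\mu\|_\scU\,\|\nu\|_\scV.
\end{equation*}
Using the Fortin property \eqref{eq:Fortin}, $b(\mu,\nu) = b(\mu,\vpi_h \nu)$, and by continuity of $\vpi_h$, $\|\vpi_h \nu\|_\scV \le \|\vpi_h\|\,\|\nu\|_\scV$. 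Combining these, $\vpi_h \nu \in \scV_h$ is an admissible test function realizing
\begin{equation*}
  \sup_{\tilde\nu \in \scV_h} \frac{b(\mu,\tilde\nu)}{\|\tilde\nu\|_\scV}
  \;\ge\; \frac{b(\mu,\vpi_h \nu)}{\|\vpi_h \nu\|_\scV}
  \;\ge\; \frac{\gamma}{\|\vpi_h\|}\,\|\mu\|_\scU,
\end{equation*}
so we can take $\gamma_h = \gamma / \|\vpi_h\|$. The hard part of the proof is thus localized to establishing this bound; the rest is bookkeeping.

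With discrete coercivity on the whole space and the discrete inf-sup in hand, the standard Brezzi quasi-optimality estimate for mixed approximations applies verbatim and produces the constant $C$ depending only on $\gamma$, $\|\vpi_h\|$, $\|\scB\|$, and the norm of the top-left form (which equals $1$). This yields the claimed bound
\begin{equation*}
  \|v - v_h\|_\scV + \|\lambda - \lambda_h\|_\scU
  \;\le\; C\,\Bigl[\,\inf_{\nu \in \scV_h}\|v - \nu\|_\scV
  \;+\; \inf_{\mu \in \scU_h}\|\lambda - \mu\|_\scU\,\Bigr],
\end{equation*}
completing the argument.
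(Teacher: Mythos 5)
Your proposal is correct and is precisely the argument the paper invokes: the paper gives no explicit proof of \cref{thm:apriori}, simply asserting that under~\eqref{eq:Bbddbelow} and~\eqref{eq:Fortin} the estimate follows from the standard theory of mixed methods in~\cite{Boffi2013}, and your write-up (coercivity of the $\scV$-inner product on all of $\scV_h$ plus the Fortin-operator derivation of the discrete inf-sup constant $\gamma/\|\vpi_h\|$, followed by Brezzi quasi-optimality) is exactly that standard verification. The only point worth making explicit is that the constant $C$ is independent of $h$ only if $\|\vpi_h\|$ is bounded uniformly in $h$, which is the usual understanding of the Fortin assumption.
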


At times, it is possible to get an improvement using the Aubin-Nitsche
duality argument. Suppose $F$ is a functional in $(\kernel\,\scB^\prime)^\perp$ and we
are interested in bounding $F(v-v_h)$, a functional of the error
$v-v_h$.  Consider $\veps \in \scV$ and $u \in \scU$
satisfying
\begin{subequations} 
  \label{eq:DPGexact}
  \begin{align}
    \label{eq:DPGexact-1}
    (\veps, \nu)_\scV + b(u, \nu) 
    &= F( \nu),
      \\
    \label{eq:DPGexact-2}
    b(\mu, \veps)
    &= 0,    
  \end{align}
\end{subequations}
for all $\nu \in \scV$, $\mu\in\scU$.
To conduct the duality argument, we suppose that there is a positive
$c_0(h) $ that goes to 0 as $h\to 0$ satisfying
\begin{equation}
  \label{eq:reg}
  \inf_{\mu \in \scU_h} \| u - \mu \|_\scU
    \le\; c_0(h)\, \| F \|_{\scV^\prime}.
\end{equation}
This usually holds when the solution
of~\cref{eq:DPGexact} has sufficient
regularity.

\begin{theorem}
  \label{thm:duality}
  Suppose~\eqref{eq:reg} holds in addition to the assumptions of \cref{thm:apriori}.
  Then there exists a positive function $c_0(h)$, which goes to 0 as $h\to 0$, such that the error in the DPG* solution component $v_h$ satisfies
  \[
    F(v - v_h ) 
    \le \; c_0(h)\,\|\scB\|\, \| F\|_{\scV^\prime}
    \bigg[
    \inf_{\nu \in \scV_h} 
    \| v - \nu \|_\scV^2 +
    \inf_{\mu \in \scU_h} \| \lambda - \mu \|_\scU^2
    \bigg]^{1/2}.
  \]  
\end{theorem}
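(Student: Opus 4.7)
The approach is a standard Aubin--Nitsche duality argument in which the DPG problem~\eqref{eq:DPGexact} plays the role of the dual problem for the DPG* method~\eqref{eq:DPGstarExact}--\eqref{eq:DPGstarApprox}. I would (i) represent $F(v - v_h)$ using the dual solution $(\veps,u)$, (ii) invoke Galerkin orthogonality of the DPG* method to localize the error in the trial variable, (iii) apply the regularity hypothesis~\eqref{eq:reg}, and (iv) control the remaining factor $\|v - v_h\|_\scV$ via \cref{thm:apriori}.

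The key preliminary observation is that, because $F \in (\kernel\,\scB^\prime)^\perp$ and the right-hand side of~\eqref{eq:DPGexact-2} is zero, \cref{prop:identities}---specifically~\eqref{eq:identity5} with $G=0$---forces $\veps = 0$. Testing~\eqref{eq:DPGexact-1} with $\nu = v - v_h \in \scV$ then yields the representation $F(v - v_h) = b(u,\, v - v_h)$. Subtracting~\eqref{eq:DPGstarApprox} from~\eqref{eq:DPGstarExact} delivers the Galerkin orthogonality $b(\mu_h, v - v_h) = 0$ for every $\mu_h \in \scU_h$, so for any such $\mu_h$ we may rewrite $F(v - v_h) = b(u - \mu_h,\, v - v_h)$. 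Bounding $b$ by $\|\scB\|$ times the norms of its arguments, taking the infimum over $\mu_h \in \scU_h$, and applying~\eqref{eq:reg} produces
\[
F(v - v_h) \;\le\; \|\scB\|\, c_0(h)\, \|F\|_{\scV^\prime}\, \|v - v_h\|_\scV.
\]
\cref{thm:apriori} then bounds $\|v - v_h\|_\scV$ by a constant multiple of $\inf_{\nu\in\scV_h}\|v - \nu\|_\scV + \inf_{\mu\in\scU_h}\|\lambda - \mu\|_\scU$, and the elementary inequality $a + b \le \sqrt{2}\,(a^2 + b^2)^{1/2}$ converts this to the square-root form appearing in the statement. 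Absorbing all resulting absolute constants into a renamed $c_0(h)$ yields the claimed estimate.

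The argument is essentially routine once the correct dual problem is identified. The only genuinely subtle step is the vanishing of $\veps$: without it, one would have to process the extra term $(\veps,\, v - v_h)_\scV$ through the Fortin operator~\eqref{eq:Fortin}, producing a representation of the form $(\veps - \vpi_h\veps,\, v - v_h)_\scV - b(\lambda - \mu_h,\, \veps - \vpi_h\veps)$ and requiring an additional regularity assumption on $\veps$. Here the compatibility $F \in (\kernel\,\scB^\prime)^\perp$ collapses those terms to zero, so that the Fortin operator enters only implicitly through \cref{thm:apriori} rather than in the duality step itself.
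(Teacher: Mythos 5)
Your proposal is correct and follows essentially the same route as the paper's proof: conclude $\veps=0$ from \cref{prop:identities} using $F\in(\kernel\,\scB^\prime)^\perp$, test \eqref{eq:DPGexact-1} with $\nu=v-v_h$, invoke Galerkin orthogonality to insert $\mu\in\scU_h$, apply \eqref{eq:reg}, and finish with \cref{thm:apriori}. Your explicit handling of the $\sqrt{2}$ factor and the absorption of constants into the renamed $c_0(h)$ is a slightly more careful rendering of the paper's final step, but the argument is the same.
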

\begin{proof}
  By \cref{prop:identities}, $\veps = 0$ since $F\in(\kernel\,\scB^\prime)^\perp$.
  Put $\nu = v-v_h$ in~\eqref{eq:DPGexact-1}.
  Then for any $\mu \in \scU_h$, we have
  \[
    \begin{aligned}
      F( v - v_h )
      & 
      = ( \veps, v-v_h)_\scV + b( u, v-v_h)
      \quad
      && \text{by \eqref{eq:DPGexact-1},}
      \\
      &
      = b( u, v-v_h) 
      && \text{since $\veps = 0$,}
      \\
      &
      = b( u -\mu, v-v_h) 
      && \text{by~\eqref{eq:DPGstarExact} and~\eqref{eq:DPGstarApprox},}
      \\
      & \le  c_0(h) \|\scB\| \| F \|_{\scV^\prime}
      \| v - v_h \|_\scV
      && \text{by~\eqref{eq:reg}}.
    \end{aligned}
  \]
  The proof is completed by applying \cref{thm:apriori}.
\end{proof}

It is interesting to note that the duality argument for the DPG* method
uses a DPG formulation: the system~\eqref{eq:DPGexact} is
clearly a DPG formulation. Vice versa, the duality argument for DPG
methods uses DPG* formulations, as can be seen from the duality
arguments in~\cite{BoumaGopalHarb14,fuhrer2017superconvergence,
  fuhrer2017superconvergent}.  Even though these references did not
use the name ``DPG*,'' one can see DPG* formulations at work within
their proofs.

At this point, an essential difficulty in DPG* methods (that was not
present in DPG methods) becomes clear.  Consider using a DPG* form
$b(\cdot, \cdot)$ given by \cref{thm:dpgstaruw} for solving the 
primal problem $\LL v = f$.
Then the error in $v_h$ computed by the
DPG* method not only depends on the regularity of the solution $v$,
but also on the regularity of an extraneous Lagrange multiplier
$\lambda$. This is evident from the best approximation error bounds
appearing in \cref{thm:apriori,thm:duality}. The
following example will clarify this observation further.

\subsection{Application to the Poisson example}
\label{sub:Application_to_the_Poisson_example}

Given $f \in L^2(\om)$, consider approximating the Dirichlet solution $v$
\begin{equation}
  \label{eq:Poisson}
-\Delta v = f \quad \text {in } \om,
\qquad 
v=0 \quad \text{ on } \d\om,
\end{equation}
by the DPG* method. We follow the setting of \cref{eg:poisson}.
Accordingly, we set
\[
\scU = L^2(\om)^d \times L^2(\om) \times H^{\minusonehalf}(\partial\oh)\times H^{\onehalf}_0(\partial\oh),
\qquad \scV = H(\div,\oh) \times H^1(\oh),
\]
where $H^{\minusonehalf}(\partial\oh)$ and $H^{\onehalf}_0(\partial\oh)$ are defined in \cref{eq:InterfaceSpaces}.
This DPG* formulation~\eqref{eq:DPGstarExact} of~\cref{eq:Poisson} characterizes two variables,
\[
\bmv = (\bmp, v) \in \scV, \qquad \blambda = (\bzeta, \lambda, \hat{\zeta}_n,
\hat\lambda) \in \scU
,
\]
satisfying 
\begin{subequations}
  \label{eq:dpgstar-poisson}
  \begin{align}
    \label{eq:dpgstar-poisson-a}
    ((\bmp,v), (\btau, \nu))_\scV
    - b((\bzeta, \lambda, \hat{\zeta}_n,\hat\lambda), (\btau, \nu)) 
    & = 0,
    \\
    \label{eq:dpgstar-poisson-b}
    b((\bsigma, \mu, \hat{\sigma}_n,\hat\mu), (\bmp, v)) 
    & = (f, \mu)_\som,
  \end{align}
\end{subequations}
for all $\bnu = (\btau, \nu)$ in $\scV$ and all
$\bmmu = (\bsigma, \mu, \hat{\sigma}_n,\hat\mu)$ in $\scU$.
Here, $b(\cdot, \cdot)$ as given by~\eqref{eq:DPG*Poissonform} and, as
before, $(\cdot,\cdot)_\som$ denotes the inner product in $L^2(\om)$
(or its Cartesian products).

By \cref{thm:dpgstaruw}, $\scB$ is a bijection, so
obviously~\eqref{eq:Bbddbelow} holds.  Let $\oh$ be a shape regular
mesh of simplices and let $P_p(K)$ denote the space of polynomials of
degree at most $p$ on a simplex $K$.  Define
$P_p(\partial K) = \{ \mu : \mu|_E \in P(E)\;  \text{ for all 
  codimension-one sub-simplices } E \text{ of } K \}$ and
$\tilde{P}_p(\partial K) = P_p(\partial K) \cap C^0(\partial K)$,
where $C^0(D)$ denotes the set of all continuous functions on a domain
$D$.  Set
\begin{gather}
  \label{eq:Ppoh}
  P_p(\oh) = \prod_{K \in \oh} P_p(K), \qquad 
  P_p(\d\oh) =  \prod_{K \in \oh} P_p(\d K),
  \\
  \label{eq:tr-fl}
  \tilde{P}_p(\d\oh) = \tr (P_p(\oh) \cap H_0^1(\om)),
  \qquad
  \hat{P}_p(\d\oh) = \tr_n (P_p(\oh)^d \cap H(\div, \om)).
\end{gather}
Clearly $\tilde{P}_p(\d\oh)$ is a subspace of $H_0^{\onehalf}(\d\oh)$
which consists of single-valued functions on mesh interfaces.  Its also
obvious that the space $\hat{P}_p(\d\oh)$ is a subspace of
$H^{-\onehalf}(\d\oh)$. Every $\hat{\sigma}_n \in \hat{P}_p(\d\oh)$
has a corresponding $\hat\sigma$ in $P_p(\oh) \cap H (\div, \om)$ such
that $\hat{\sigma}_n = \hat {\sigma} \cdot n.$ In particular, the
value of $\hat{\sigma}_n|_{\d K^+}$ has the opposite sign of the value
of $\hat{\sigma}_n|_{\d K^-}$ at every point of a mesh interface
$E = \d K^+ \cap \d K^-$ (so $\hat{\sigma}_n$ is not single-valued on
$E$).  To remember this orientation dependence, we often write
a $\hat{\sigma}_n$ in $\hat{P}(\oh)$ as $\hat{\sigma} \cdot \vec n$.
A Fortin operator satisfying~\eqref{eq:Fortin} for the case
\begin{align}
  \scU_h
  &=
    \{ (\bsigma, \mu,\hat{\sigma}_n,\hat \mu) \in \scU :
    \bsigma \in P_p(\oh)^d, \;
    \mu \in P_p(\oh),\;
    \hat{\sigma}_n \in \hat{P}_p(\d\oh), \;
    \hat{\mu} \in \tilde{P}_{p+1}(\d\oh) \}
   \\
  \scV_h
  &=
    \{(\btau,\nu) \in \scV :
    \btau
    \in P_{p+d}(\oh)^d,\, \nu\in P_{p+d}(\oh) \}  .
\end{align}
was constructed in~\cite{gopalakrishnan2014analysis}.

To understand the practical convergence rates, we must understand the
regularity of $\blambda$. One way to do this is to write down the
boundary value problem that $\blambda$ satisfies, as done
in~\cite{BoumaGopalHarb14,fuhrer2017superconvergence}. An alternate
technique can be seen in~\cite{fuhrer2017superconvergent}, which
directly manipulates the variational
equation~\eqref{eq:dpgstar-poisson-a} using the information
in~\eqref{eq:dpgstar-poisson-b}.  We follow the latter approach
in the next proof.

\begin{proposition}
  \label{prop:LM}
  The solution components $\bzeta, \lambda, \hat{\zeta}_n,\hat\lambda$ of the
  system~\eqref{eq:dpgstar-poisson} can be characterized using 
  the remaining solution components, $\bmp$ and $v$, and the function $f$ as 
  \begin{equation}
  \label{eq:LMSolutions}
    \begin{aligned}
      \bzeta & = \bmp + \bmr, && \qquad \hat{\zeta}_n = 2\bmp \cdot \bmn + \bmr \cdot \bmn,
      \\
      \lambda & = f + e,    && \qquad \hat\lambda = e,
    \end{aligned}
  \end{equation}
  where $(\bmr, e)$ is in the space $H_0(\mcL^\ast)$ defined
  in~\eqref{eq:V_Vt_Poisson} and satisfies the Dirichlet problem
  $\LL^*(\bmr, e) = (\vec{0}, v+2f)$ where $\LL^*$ is as
  in~\eqref{eq:L*_Poisson}.
  Specifically, $e\in H_0^1(\Omega)$ satisfies $-\Delta e = v+2f$ and $\bmr = - \grad e$.
\end{proposition}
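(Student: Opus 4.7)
My plan is to exploit the uniqueness of the DPG* solution granted by \cref{thm:dpgstaruw}: since~\eqref{eq:dpgstar-poisson} has a unique solution, it suffices to verify that the stated formulas, together with $(\bmp, v)$, satisfy the first equation~\eqref{eq:dpgstar-poisson-a}. Before doing so, I would first extract all available information about $(\bmp, v)$ from the second equation~\eqref{eq:dpgstar-poisson-b} by testing with the four components of $(\bsigma, \mu, \hat\sigma_n, \hat\mu)$ independently. This yields $\bmp = \grad_h v$, $-\div_h \bmp = f$, single-valuedness of $\bmp\cdot\bmn$ on interior interfaces (hence $\bmp \in \bmH(\div,\om)$), and continuity of $v$ across interfaces together with a vanishing boundary trace (hence $v \in H_0^1(\om)$). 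In particular, $\bmp = \grad v$ and $\Delta v = -f$ hold globally.

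Since $\LL^\ast : H_0(\mcL^\ast) \to L^2$ is a bijection by standard elliptic theory for the Poisson operator, there exists a unique $(\bmr, e) \in \bmH(\div,\om) \times H_0^1(\om)$ with $\LL^\ast(\bmr, e) = (\vec 0,\, v + 2f)$; componentwise this gives $\bmr = -\grad e$ and $\div \bmr = v + 2f$, i.e., $e$ is the $H_0^1$ Dirichlet solution of $-\Delta e = v + 2f$. This auxiliary field is precisely the data from which the candidate Lagrange multiplier is assembled via the formulas in the proposition.

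The main work is then a direct verification: substitute the candidate formulas into the right-hand side of~\eqref{eq:dpgstar-poisson-a} and reduce the two interface pairings by elementwise integration by parts. Because $e \in H_0^1(\om)$ is conforming,
\[
\ip{\btau \cdot \bmn, \hat\lambda}_h = (\div_h \btau, e)_\om + (\btau, \grad e)_\om,
\]
and because $2\bmp + \bmr \in \bmH(\div,\om)$ a similar element-wise integration by parts yields
\[
\ip{\hat\zeta_n, \nu}_h = (\div(2\bmp + \bmr), \nu)_\om + (2\bmp + \bmr, \grad_h \nu)_\om,
\]
where $\div(2\bmp + \bmr) = -2f + (v + 2f) = v$. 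Expanding, and using $\bmr = -\grad e$, the terms $(\bmr,\btau)_\om + (\btau,\grad e)_\om$ cancel, the two occurrences of $(e, \div_h \btau)_\om$ (with opposite signs) cancel, and the two occurrences of $(\bmr, \grad_h \nu)_\om$ cancel. What survives equals
\[
(\grad v, \btau)_\om - (f, \div_h \btau)_\om + (v, \nu)_\om + (\grad v, \grad_h \nu)_\om,
\]
which is precisely $((\bmp, v), (\btau, \nu))_\scV$ for the standard inner product on $\scV = \bmH(\div,\oh) \times H^1(\oh)$, namely $(\bmp,\btau)_\om + (\div_h \bmp,\div_h \btau)_\om + (v,\nu)_\om + (\grad_h v, \grad_h \nu)_\om$. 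Uniqueness of the DPG* solution then closes the argument.

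The principal obstacle is not any single manipulation but the bookkeeping of the ansatz: the factor of $2$ in $\hat\zeta_n = 2\bmp\cdot\bmn + \bmr\cdot\bmn$ and the source $v + 2f$ in the auxiliary Dirichlet problem are not obvious a priori. They must arise exactly so that $\div(2\bmp + \bmr) = v$ and so that the extra $(\grad v, \grad_h \nu)_\om$ produced by $\ip{\hat\zeta_n, \nu}_h$ matches the corresponding term in $((\bmp,v),(\btau,\nu))_\scV$. One way to rediscover the ansatz is to start from the naive guess $(\bmp, f, \bmp\cdot\bmn, 0)$, compute the resulting residual in~\eqref{eq:dpgstar-poisson-a}, and let $(\bmr, e)$ absorb it through the adjoint Dirichlet problem.
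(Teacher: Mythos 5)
Your proof is correct and is essentially the paper's argument run in reverse: the paper starts from the inner product $((\bmp,v),(\btau,\nu))_\scV$ and, using $\bmp=\grad v$, $-\div\bmp=f$, and elementwise integration by parts, rewrites it as $b((\bmp+\bmr,\,f+e,\,(2\bmp+\bmr)\cdot\bmn,\,e),(\btau,\nu))$, whereas you substitute the same ansatz into \eqref{eq:dpgstar-poisson-a} and verify it collapses to the inner product, concluding by the same injectivity/uniqueness argument. The only cosmetic difference is that the paper introduces $(\bmr,e)$ through the ultraweak formulation \eqref{eq:uwprob} and \cref{thm:wellposed-mesh*}, while you invoke the bijectivity of $\LL^\ast:H_0(\mcL^\ast)\to L^2$ directly; both yield the same auxiliary Dirichlet problem $-\Delta e = v+2f$, $\bmr=-\grad e$.
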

\begin{proof}
  By \cref{thm:dpgstaruw}, we know
  that~\eqref{eq:dpgstar-poisson-b} implies that $\bmp, v$ satisfies
  $\LL (\bmp, v) = (\vec{0}, f)$, i.e.,
  \begin{equation}
    \label{eq:q_u_char}
    \bmp - \grad v = \vec{0}, \qquad -\div \bmp = f.
  \end{equation}
  Next, we manipulate the first term of~\eqref{eq:dpgstar-poisson-a}
  as follows:
  \begin{align}
    ((\bmp,v), (\btau, \nu))_\scV
    & = (\bmp, \btau)_\som + ( \div \bmp, \div \btau)_\som 
      + (v,\nu)_\som + (\grad v, \grad \nu)_\som
      \\
    & = (\bmp, \btau - \grad \nu)_\som + ( \div \bmp, \div \btau)_\som 
      + (v,\nu)_\som + 2(\grad v, \grad \nu)_\som
    \\
    & = (\bmp, \btau-\grad \nu)_\som + (f, -\div \btau)_\som + (v,\nu)_\som 
      \\
    & \qquad + 
      2\sum_{K \in \oh} 
      \bigg[ 
      \ip{\bmn \cdot \grad v, \nu}_{H^{1/2}(\d K)}-(\Delta v, \nu)_{\selement} 
      \bigg]
    \\
    & = b( (\bmp, f, 2\bmp\cdot \bmn, 0 ), (\btau, \nu)) + (v+2f, \nu)_\som
    ,
  \end{align}
  where we have used~\eqref{eq:q_u_char} twice. Now, let
  $\bmr \in L^2(\om)^d, e \in L^2(\om)$ and
  $(\hat{r}_n, \hat{e}) \in Q(\mcL_h)$ satisfy
  \[
  b( (\bmr, e, \hat{r}_n, \hat{e}), (\btau, \nu)) = (v+2f, \nu)_\som
  \]
  for all $(\btau, \nu) \in H(\mcL_h) = \scV.$ This is a variational equation of
  the form~\eqref{eq:uwprob}. Hence, by the first item of
  \cref{thm:wellposed-mesh*}, $\bmr$ and $e$ are unique.
  Moreover, $(\bmr, e)\in H_0(\mcL^\ast)$ satisfies $\LL^* (\bmr,e) = (0, v+2f)$, and
  $\bmr\cdot \bmn|_{\d K} = \hat{r}_n|_{\d K}, e|_{\d K} = \hat
  e|_{\d K}$ on all mesh element boundaries. Thus,
  \[
  ((\bmp,v), (\btau, \nu))_\scV
  = 
  b( (\bmp+ \bmr, f+ e, (2\bmp + \bmr)\cdot \bmn, e ), (\btau, \nu)).
  \]
  Comparing this with~\eqref{eq:dpgstar-poisson-a}, the result
  follows.
\end{proof}

We may now apply \cref{thm:apriori} along with standard Bramble-Hilbert arguments (see~\cite[Corollary~3.6]{gopalakrishnan2014analysis} for details) to obtain convergence rates dictated by the following corollary to \cref{thm:apriori}.

\begin{corollary}
\label{cor:DPG*PoissonConvergenceRates}
  Let $h= \max_{K\in\oh} \mathrm{diam}(K)$, $d=2,3$, and let the assumptions of \cref{thm:apriori,prop:LM} hold.
  Let $\bmv_h = (\bmp_h, v_h) \in \scV_h$ and  $\blambda_h = (\bzeta_h, \lambda_h, \hat{\zeta}_{n,h}, \hat{\lambda}_h) \in \scU_h$  be the DPG* solutions to \cref{eq:DPGstarApprox}, with $G(\bmmu) = (f,\mu)$.
  Let $e\in H_0^1(\Omega)$ satisfy $-\Delta e = v+2f$.
  Then
  \begin{align}
  \| \bmv - \bmv_h \|_\scV + \| \blambda - \blambda_h \|_\scU
  &\leq C h^s
  \big(\|v\|_{H^{s+2}(\Omega)} + \|e\|_{H^{s+2}(\Omega)}\big)
  \,,
  \label{eq:DPG*PoissonConvergenceRatesIneq}
  \end{align} 
  for all $1/2 < s < p+1$.
\end{corollary}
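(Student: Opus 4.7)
The plan is to reduce the estimate to best--approximation errors via \cref{thm:apriori}, read off the regularity of every solution component from \cref{prop:LM}, and then invoke standard Bramble--Hilbert polynomial approximation estimates in each factor of $\scV_h$ and $\scU_h$.

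First, \cref{thm:apriori} gives
\begin{equation*}
  \|\bmv-\bmv_h\|_\scV+\|\blambda-\blambda_h\|_\scU
  \;\lesssim\;
  \inf_{\bnu\in\scV_h}\|\bmv-\bnu\|_\scV
  \;+\;\inf_{\bmmu\in\scU_h}\|\blambda-\bmmu\|_\scU,
\end{equation*}
so only best--approximation quantities remain. I would then identify regularities as follows. Writing $\bmv=(\bmp,v)$ with $\bmp=\grad v$ and $f=-\Delta v$, the primal regularity is inherited from $v\in H^{s+2}(\Omega)$, giving $\bmp\in H^{s+1}(\Omega)^d$ and $\div\bmp=-f\in H^s(\Omega)$. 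For the multiplier, \cref{prop:LM} supplies $\bzeta=\bmp+\bmr$, $\hat\zeta_n=(2\bmp+\bmr)\cdot\bmn$, $\lambda=f+e$, $\hat\lambda=e$ with $\bmr=-\grad e$, so the hypothesis $e\in H^{s+2}(\Omega)$ pins down the remaining regularity: $\bmr\in H^{s+1}(\Omega)^d$, $\lambda\in H^s(\Omega)$, and the natural $H(\div,\Omega)$-lift $2\bmp+\bmr$ of $\hat\zeta_n$ has divergence $v\in H^{s+2}(\Omega)$.

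Next, I would bound each factor by its natural piecewise polynomial approximation estimate, exactly in the spirit of \cite[Cor.~3.6]{gopalakrishnan2014analysis}. In $\scV_h$, the $H(\div,\oh)$-approximation of $\bmp$ by $P_{p+d}(\oh)^d$ splits into an $L^2$ part of order $h^{s+1}$ and a divergence part of order $h^s$, while the $H^1(\oh)$-approximation of $v$ by $P_{p+d}(\oh)$ is $O(h^{s+1})$. In $\scU_h$, the $L^2$-approximation of $\bzeta$ by $P_p(\oh)^d$ is $O(h^{\min(s+1,p+1)})$ and of $\lambda$ by $P_p(\oh)$ is $O(h^{\min(s,p+1)})=O(h^s)$, while the minimum--energy extension quotient norms of $\hat\zeta_n-\hat\zeta_{n,h}$ in $H^{\minusonehalf}(\d\oh)$ and of $\hat\lambda-\hat\lambda_h$ in $H^{\onehalf}_0(\d\oh)$ are controlled by the $H(\div,\Omega)$- and $H^1(\Omega)$-approximations of the lifts $2\bmp+\bmr$ and $e$, respectively, yielding rates no worse than $h^s$. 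Summing these bounds and using $s<p+1$ (so no degree cap activates below $h^s$) produces the claimed estimate \eqref{eq:DPG*PoissonConvergenceRatesIneq}.

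The main obstacle --- and precisely the reason the bound is $h^s$ rather than $h^{s+1}$ --- is the $L^2$ approximation of $\lambda=f+e$: although $e\in H^{s+2}(\Omega)$, the source $f=-\Delta v$ only lies in $H^s(\Omega)$, so this factor is strictly rate--limiting (as is the companion $L^2$ approximation of $\div\bmp=-f$ in the test space). Every other component has at least one extra derivative relative to the polynomial degree in use, so nothing sub-$h^s$ ever enters. The lower bound $s>1/2$ plays a role only in giving meaning to the boundary trace approximations defining $\hat\zeta_n$ and $\hat\lambda$; no interior estimate requires it.
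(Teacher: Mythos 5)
Your proposal is correct and follows essentially the same route as the paper's proof: reduce to best-approximation via \cref{thm:apriori}, read off the regularity of each component from \cref{prop:LM} together with $\bmp=\grad v$, $\bmr=-\grad e$, $f=-\Delta v$, estimate the field variables by standard $L^2$/$H^1$/$H(\div)$ interpolation and the interface variables through continuity of the trace maps applied to the lifts $2\bmp+\bmr$ and $e$. Your identification of the rate-limiting terms (the $L^2$ approximation of $\lambda=f+e$ and the divergence part of the $H(\div)$ approximation, both only $H^s$-regular) matches what is implicit in the paper's component-by-component bounds, so no substantive difference remains.
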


\begin{proof}
  Note that the left-hand side of the inequalities in \cref{thm:apriori,cor:DPG*PoissonConvergenceRates} coincide.
  Therefore, our proof proceeds by showing that $\inf_{\vec{\nu} \in \scV_h}\| \vec{v} - \vec{\nu} \|_\scV^2 + \inf_{\vec{\mu} \in \scU_h} \| \vec{\lambda} - \vec{\mu} \|_\scU$ is bounded from above by the right-hand side of~\cref{eq:DPG*PoissonConvergenceRatesIneq}.

  First notice that $\| \vec{v} - \vec{\nu} \|_\scV^2 = \|\vec{p}-\vec{\tau}\|_{H(\div,\Omega_h)}^2 + \|v-\nu\|_{H^1(\Omega_h)}^2$.
  Therefore, the following inequality, $\inf_{\bnu \in \scV_h}\| \bmv - \bnu \|_\scV^2 \leq C h^{s} (\|\bmp\|_{H^{s+1}(\Omega)} + \|v\|_{H^{s+1}(\Omega)})$, is immediate upon substituting ${\bnu = \prod_{K\in\mesh}(\Pi_{\div}^K\bmp,\Pi_{\grad}^K v)}$, where $\Pi_{\div}^K: \bmH(\div,K) \to \vec{x}P_{p}(K) + P_{p}(K)^d$ and $\Pi_{\grad}^K:H^1(K)\to P_{p+1}(K)$ are the local Raviart--Thomas and nodal interpolation operators, for each element $K\in\mesh$.
  Since $\bmp = \grad v$, we see that $\inf_{\bnu \in \scV_h}\| \bmv - \bnu \|_\scV^2 \leq C h^{s} \|v\|_{H^{s+2}(\Omega)}$.

  To handle the term $\inf_{\vec{\mu} \in \scU_h} \| \vec{\lambda} - \vec{\mu} \|_\scU$, we remark that it is well known (cf. \cite{demkowicz2012polynomial}) that there also exist global interpolants $\Pi_{\grad}v\in H^1_0(\Omega)$, $\Pi_{\div}\bmp\in \bmH(\div,\Omega)$, and $\Pi\lambda\in L^2(\Omega)$ such that $\Pi_{\grad}v|_K\in P_{p+1}(K)$, $\Pi_{\div}\bmp|_K\in \vec{x}P_{p}(K) + P_{p}(K)^d$, and $\Pi\lambda|_K \in P_p(K)$, for all $K\in\Omega$.
  Moreover, there exists constants $C$, depending on the polynomial degree $p$ and the shape of the domain $\Omega$, such that
  \begin{subequations}
    \begin{align+}
      \|v-\Pi_{\grad}v\|_{H^1(\Omega)} &\leq C h^s|v|_{H^{s+1}(\Omega)},&&\hspace{-30pt}(1/2< s \leq p+1),
      \label{eq:H1Interpolant}
      \\
      \|\bmp-\Pi_{\div}\bmp\|_{\bmH(\div,\Omega)} &\leq C h^s|\bmp|_{H^{s+1}(\Omega)}, &&\hspace{-30pt}(0 < s \leq p+1),
      \label{eq:HdivInterpolantAPriori}
      \\
      \|\lambda-\Pi\lambda\|_{L^2(\Omega)} &\leq C h^s|\lambda|_{H^s(\Omega)}, &&\hspace{-30pt}(0 < s \leq p+1).
      \label{eq:L2Interpolant}
    \end{align+}
  \end{subequations}
  Notice that $\| \blambda - \vec{\mu} \|_\scU^2 = \|\bzeta - \sigma\|_{L^2(\Omega)}^2 + \|\lambda - \mu\|_{L^2(\Omega)}^2 + \|\hat{\zeta}_n - \hat{\sigma}_{n}\|_{H^{\minusonehalf}(\partial\mesh)} + \|\hat{\lambda} - \hat{\mu}\|_{H^{\onehalf}(\partial\mesh)}$.
  Consider $\inf_{\vec{\sigma}\in L^2(\Omega)^d}\|\bzeta - \vec{\sigma}\|_{L^2(\Omega)} \leq C h^s|\bzeta|_{H^s(\Omega)}$ and $\inf_{\mu\in L^2(\Omega)}\|\lambda - \mu\|_{L^2(\Omega)} \leq h^s|\lambda|_{H^s(\Omega)}$, both by \cref{eq:L2Interpolant}.
  Now, by \cref{eq:LMSolutions} in \cref{prop:LM}, $|\bzeta|_{H^s(\Omega)} \leq |\bmp|_{H^s(\Omega)} + |\bmr|_{H^s(\Omega)}$.
  Similarly, by invoking the identity $f = -\Delta v$, we have $|\lambda|_{H^s(\Omega)} \leq 2|f|_{H^s(\Omega)} + |e|_{H^s(\Omega)} \leq 2 |v|_{H^{s+2}(\Omega)} + |e|_{H^s(\Omega)}$.
  Next, recall that $\|\tr_n{\vec{\sigma}}\|_{H^{\minusonehalf}(\partial\mesh)} \leq C\|\bsigma\|_{\bmH(\div,\mesh)}$, for any $\bsigma\in\bmH(\div,\mesh)$, by continuity of the normal trace operator $\tr_n: \bmH(\div,\mesh) \to H^{\minusonehalf}(\partial\mesh)$.
  Therefore, invoking \cref{eq:HdivInterpolantAPriori} and \cref{eq:LMSolutions}, we see that $\inf_{\hat{\sigma}_n\in H^{\minusonehalf}(\partial\mesh)}\|\hat{\zeta}_n - \hat{\sigma}_{n}\|_{H^{\onehalf}(\partial\mesh)} \leq C h^s ( 2|\bmp|_{H^{1+s}(\Omega)} + |\bmr|_{H^{1+s}(\Omega)})$.
  Likewise, using the trace theorem, \cref{eq:H1Interpolant}, and \cref{eq:LMSolutions}, we find $\inf_{\hat{\mu}\in H^{\onehalf}(\partial\mesh)}\|\hat{\lambda} - \hat{\mu}\|_{H^{\onehalf}(\partial\mesh)} \leq C h^s|e|_{H^{1+s}(\Omega)}$.
  Recalling that $\bmp = \grad v$ and $\bmr = - \grad e$ completes the proof.
\end{proof}

The conclusion from \cref{cor:DPG*PoissonConvergenceRates} is that even if the solution $v$ has high regularity throughout the entire domain and up to the boundary, the convergence rate of the DPG* method is still controlled by a pollution variable $e$, which may happen to be less regular than $v$.
Indeed, by elliptic regularity \cite{MR2597943}, $e$, which satisfies $-\Delta e = v+2f$, will be at least as regular as $v$ in the interior of the domain, but may not be as regular up to the boundary.

To illustrate how to get higher order convergence rates in weaker norms using
duality, we want to apply \cref{thm:duality}. To this
end, we require sufficient regularity in the solution of the dual problem.
Consider the case of full regularity,
where, for any $g\in L^2(\om)$, the solution $u \in H^1_0(\om)$ of
the Dirichlet problem $-\Delta u = g$ satisfies
\begin{equation}
  \label{eq:full_reg}
  \| u \|_{H^2(\om)} \le C \| g \|_{L^2(\om)}.  
\end{equation}
The inequality above is well known to hold on convex polygonal domains.
In this case, we apply \cref{thm:duality} with $F \in \scV^\prime$ defined
\begin{equation}
  F( (\vec{\tau},\nu)) = (v-v_h, \nu)_\som.
\label{eq:LoadL2ErrorDPG*argument}
\end{equation}
Note that $F$ only sees the error in the first solution component of $\vec{v}_h = (\vec{p}_h, v_h)$ and that
\begin{equation}
  \label{eq:3}
  \| F \|_{\scV^\prime} \le \| v - v _h \|_{L^2(\om)}.
\end{equation}

We now need to verify~\cref{eq:reg}, so let us consider the present analog of~\cref{eq:DPGexact}, with the functional $F$ in~\cref{eq:LoadL2ErrorDPG*argument}:
\begin{equation}
  \left\{
  \begin{alignedat}{3}
    &(\vec{\veps}, \vec{\nu})_\scV + b(\vec{u}, \vec{\nu}) 
    &&= F( \vec{\nu}),
    \quad
    &&
    \forall\, \vec{\nu} \in \scV,
    \\
    &b(\vec{\mu}, \vec{\veps})
    &&= 0,
    \quad
    &&
    \forall\, \vec{\mu} \in \scU.
  \end{alignedat}
  \right.
\label{eq:DPGPoissonSuperconvergence}
\end{equation}
First, observe that~\cref{thm:wellposed-mesh*} implies $\scB$ is a bijection,
so~\eqref{eq:DPGexact-2} implies that
$\vec{\veps}=0$. Therefore~\eqref{eq:DPGexact-1} reduces to finding $\vec{u} = (\vec{q},u) \in \scU$, where
$b(\bmu, \bnu) = F(\bnu)$ for all $\bnu \in \scV$.
This is an equation of the form~\cref{eq:uwprob*}.
Hence, the second item of \cref{thm:wellposed-mesh*} implies that
$\LL^* \bmu = (\vec{0}, v-v_h)$; i.e., we may write
$\bmu = (-\grad u, u)\in H_0(\mcL^\ast) = \bmH(\div, \om) \times H^1_0(\om)$ such that $u \in H^1_0(\om)$
satisfies $-\Delta u= v-v_h$. Now, due to the full regularity
estimate~\eqref{eq:full_reg} applied to $u$, we have
$\| u \|_{H^2(\om)} \le C \| v - v_h \|_{L^2(\om)}$.

We may now invoke the complement of \cref{cor:DPG*PoissonConvergenceRates}, \cite[Theorem~6]{fuhrer2017superconvergent}:
\begin{theorem}
  Let $p\in\N_0$, let $\bmu$ be the solution to~\cref{eq:DPGPoissonSuperconvergence} for some arbitrary $F\in\scV^\prime$.
  Let $\bmu_h$ be the corresponding DPG solution.
  Then there exists a constant $C$, depending only on $p$ and the shape regularity of $\mesh$, such that
  \begin{equation}
    \inf_{\vec{\mu} \in \scU_h} \| \vec{u} - \vec{\mu} \|_\scU
    \leq
    Ch^{p+1}\big(\|u\|_{H^{p+2}(\Omega)} + \|\bmq\|_{H^{p+1}(\mesh)}\big)
    .
  \end{equation}
\end{theorem}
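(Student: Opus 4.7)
Since the inequality is a pure best-approximation estimate in the product space $\scU_h$, the strategy is to exhibit an explicit competitor $\vec{\mu}_h = (\bsigma_h, \mu_h, \hat{\sigma}_{n,h}, \hat{\mu}_h) \in \scU_h$ and bound each component of $\vec{u} - \vec{\mu}_h$ separately in its respective norm. For the two volumetric components I would take element-wise $L^2$-projections, $\bsigma_h = P_p\bmq \in P_p(\mesh)^d$ and $\mu_h = P_p u \in P_p(\mesh)$, so that standard Bramble--Hilbert (cf.\ \cref{eq:L2Interpolant}) delivers $\|\bmq-\bsigma_h\|_{L^2(\Omega)} \le Ch^{p+1}|\bmq|_{H^{p+1}(\mesh)}$ and $\|u-\mu_h\|_{L^2(\Omega)} \le Ch^{p+1}|u|_{H^{p+1}(\Omega)}$.

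For the Dirichlet-trace component $\hat{u}$, I select $\hat{\mu}_h = \tr(I^{p+1}u)$, where $I^{p+1}u \in P_{p+1}(\mesh)\cap H^1_0(\Omega)$ is a conforming interpolant (Scott--Zhang, or the projection-based interpolant of Demkowicz--Buffa). The definition of $\|\cdot\|_{H^{\onehalf}(\partial\mesh)}$ as a minimum-energy-extension (quotient) norm and continuity of the trace then give
\begin{equation*}
  \|\hat{u}-\hat{\mu}_h\|_{H^{\onehalf}(\partial\mesh)} \;\le\; \|u - I^{p+1}u\|_{H^1(\Omega)} \;\le\; Ch^{p+1}|u|_{H^{p+2}(\Omega)},
\end{equation*}
by~\cref{eq:H1Interpolant}. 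For the normal-flux component $\hat{q}_n$, I would set $\hat{\sigma}_{n,h} = \tr_n(J_p\bmq)$ for an $\bmH(\div,\Omega)$-conforming interpolant $J_p\bmq \in P_p(\mesh)^d \cap \bmH(\div,\Omega)$, and invoke the analogous quotient-norm bound
\begin{equation*}
  \|\hat{q}_n - \hat{\sigma}_{n,h}\|_{H^{\minusonehalf}(\partial\mesh)} \;\le\; \|\bmq - J_p\bmq\|_{\bmH(\div,\Omega)}.
\end{equation*}

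The crux of the proof lies in this last term, and this is where I anticipate the main obstacle. With the classical Raviart--Thomas choice $J_p = \Pi^{\mathrm{RT}}_p$, the commuting property yields $\div \Pi^{\mathrm{RT}}_p \bmq = P_p \div\bmq$, reducing the divergence part of the error to $\|\div\bmq - P_p\div\bmq\|_{L^2(\Omega)}$. However, because $\bmq = -\grad u$ implies $\div\bmq = -\Delta u$, the hypothesis $u \in H^{p+2}(\Omega)$ provides only $\div\bmq \in H^p(\Omega)$, and Bramble--Hilbert then delivers merely $O(h^p)$ in this term, one order short of the desired $O(h^{p+1})$. To overcome this mismatch, I would follow the construction of~\cite[Theorem 6]{fuhrer2017superconvergent}, replacing $\Pi^{\mathrm{RT}}_p$ by a projection-based interpolation that exploits the identity $\bmq = -\grad u$ so that the divergence-error is recast in terms of $L^2$-projection errors of $\bmq$ and $u$ themselves rather than of $\Delta u$. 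Summing the four component estimates then produces the claimed bound with right-hand side proportional to $\|u\|_{H^{p+2}(\Omega)} + \|\bmq\|_{H^{p+1}(\mesh)}$.
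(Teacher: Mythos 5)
The first thing to note is that the paper does not prove this statement at all: it is imported verbatim as \cite[Theorem~6]{fuhrer2017superconvergent} (the text introduces it with ``we may now invoke the complement of \cref{cor:DPG*PoissonConvergenceRates}''), so there is no in-paper proof to compare against. Judged on its own terms, your component-wise strategy --- an explicit competitor in each of the four factors of $\scU_h$, with element-wise $L^2$ projections for the field variables and traces of conforming interpolants measured in the minimum-energy-extension (quotient) norms for the interface variables --- is the standard and correct skeleton, and the bounds you give for $\bsigma_h$, $\mu_h$ and $\hat{\mu}_h$ are fine.

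However, there is a genuine gap at precisely the step you yourself flag as the crux, and it is not closed. The obstruction is real: with a commuting Raviart--Thomas interpolant the divergence part of $\|\bmq - J_p\bmq\|_{\bmH(\div,\om)}$ reduces to $\|\div\bmq - P_p\div\bmq\|_{L^2(\om)}$ with $\div\bmq=-\Delta u\in H^p(\om)$ only, so the naive quotient-norm bound loses one order (and at $p=0$, which is the case actually used in the paper's duality argument, it yields no positive power of $h$ at all). Your proposed resolution is to ``follow the construction of \cite[Theorem~6]{fuhrer2017superconvergent}'' --- but that theorem \emph{is} the statement being proved, so the deferral is circular and the hard step is never carried out. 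The actual fix in F\"uhrer's proof exploits that $\|\cdot\|_{H^{\minusonehalf}(\partial\oh)}$ is an infimum over \emph{all} $\bmH(\div)$ liftings of the trace difference, not just the global difference $\bmq - J_p\bmq$: one builds element-local liftings of $\tr_n(\bmq - J_p\bmq)$ whose divergence is controlled by inverse powers of $h_K$ times boundary moments that are themselves of higher order, so that $\Delta u - P_p\Delta u$ never enters. Without that construction (or an equivalent one), your argument proves a bound that is one order short of the claim, and in the lowest-order case proves nothing.
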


Using the fact that $\bmq = -\grad u$, we now have the estimate
\begin{equation}
  \inf_{\vec{\mu} \in \scU_h} \| \vec{u} - \vec{\mu} \|_\scU
  \leq
  Ch\|u\|_{H^{2}(\Omega)}
  \leq
  Ch\|v-v_h\|_{L^2(\Omega)}
  .
\end{equation}
Which is of the same form as~\cref{eq:reg}.
Finally, it is clear that assumption~\eqref{eq:reg} holds with $c_0(h) = h$.
Then,~\eqref{eq:3} and \cref{thm:duality} imply
\[
\| v - v_h \|_{L^2(\om)}^2 
 = F(v-v_h) 
\le C h \| v - v_h \|_{L^2(\om)}     \bigg[
    \inf_{\nu \in \scV_h} 
    \| \bmv - \bnu \|_\scV^2 +
    \inf_{\bmmu \in \scU_h} \| \blambda - \bmmu \|_\scU^2
    \bigg]^{1/2},
\]
which provides one higher order of convergence in the $L^2$ norm for the solution component $v_h$.

Ultimately, the upshot of the entire \emph{a priori} error analysis above is that poor {\it a priori} convergence rates are possible with this method, even for infinitely smooth solutions $v$, due to the Lagrange multiplier $\lambda$, which may not be as smooth (cf. \cref{sub:square_domain_dirichlet}).
Thus, without an adaptive algorithm that helps one capture irregular solutions, the DPG* method is generally impractical for high-order methods.
We therefore proceed by studying \textit{a posteriori} error control.


\section{A posteriori error control} 
\label{sec:a_posteriori_error_control}

In this section, we will present an abstract {\it a posteriori} error
estimator valid for all ultraweak DPG* formulations (see
\cref{sub:ultraweak_formulations}). We then proceed to work out the
example of the Poisson problem in full detail.  Note that abstract
ultraweak formulations encompass many physical models besides the
Poisson example given above. Other important examples with similar functional settings include convection-dominated diffusion \cite{demkowicz2013robust,chan2014robust}, Stokes flow
\cite{roberts2014dpg,carstensen2018low}, linear elasticity
\cite{Bramwell12,carstensen2016low}, and acoustics
\cite{gopalakrishnan2014dispersive,petrides2016ices,astaneh2018perfectly}.

\subsection{Designing error estimators for general ultraweak DPG* formulations}
\label{sub:abstract_stability_analysis}

Consider the general setting of \cref{sub:ultraweak_formulations} and
the broken ultraweak DPG* formulation which is proved to be well posed
in \cref{thm:dpgstaruw}. Namely, with $\LL$ set to the general partial
differential operator in~\cref{eq:StrongFormulation}, the problem of
finding a $v \in H_0(\LL)$ satisfying $\LL v = f$ is reformulated as
\cref{eq:1uw}, where
\[
\ip{\scB (\mu, \rho), \nu}_{\scV} = 
b((\mu, \rho), \nu) = (\mu, \LL_h \nu)_\som + \ip{ \rho,\nu }_h,
\] for all
$ (\mu, \rho) \in \scU = L^2 \times Q(\LL_h)$ and
$ \nu \in \scV = H(\LL_h).$ The DPG* method produces an approximation
to $v$ using two finite-dimensional subspaces $\scU_h \subset \scU$
and $\scV_h \subset \scV$.
Let 
\begin{equation}
  \label{eq:abstracteta}
\eta(\nu) = \sup_{\rho \in Q(\LL_h)} 
\frac{ \ip{\rho, \nu}_h }{\;\| \rho \|_{Q(\LL_h)}}, \qquad \nu \in \scV_h.  
\end{equation}
This quantity can usually be interpreted as a ``jump term'' in
applications. The message of the next theorem is that, notwithstanding
the generality of the operators considered, the design of {\it a posteriori} error estimators for all ultraweak DPG* formulations
reduces to obtaining upper and lower bounds for $\eta$.

\begin{theorem}
\label{thm:DualDPGAPosterioriErrrorEstimator-0}
Consider the ultraweak DPG* formulation~\cref{eq:1uw} with
$G((\mu, \rho)) = (f, \mu)$, for some $f \in L^2$.
Suppose~\cref{eq:UWasm} holds and $\LL^\ast: H_0(\mcL^\ast) \to L^2$ is a
bijection.  Then, for any $v_h \in \scV_h$ (not necessarily equal to
the DPG* solution),
\begin{equation}
  \|\scB\|\inv \|G - \scB' v_h\|_{\scU^\prime}
  \leq
  \| v - v_h \|_\scV
  \leq
  \|\scB\inv\| \|G - \scB' v_h\|_{\scU^\prime}
\label{eq:EquivalenceErrorResiduals}
\end{equation}
and, moreover,
\begin{equation}
  \|G - \scB' v_h\|_{\scU^\prime}^2
  =
  \| \LL_h v_h - f \|_\som^2 + \eta(v_h)^2
  .
\label{eq:ResidualCharacterization}
\end{equation}
\end{theorem}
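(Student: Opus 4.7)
The plan is to establish the two-sided bound \eqref{eq:EquivalenceErrorResiduals} between the $\scV$-error and the dual-norm residual, and then to compute the dual norm of the residual explicitly to obtain \eqref{eq:ResidualCharacterization}. The argument rests entirely on the product-space structure of $\scU = L^2 \times Q(\LL_h)$ and on the bijectivity of $\scB$ supplied by \cref{thm:dpgstaruw} under the stated hypotheses.

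For \eqref{eq:EquivalenceErrorResiduals}, I would first observe that the exact solution $v \in \scV$ of \eqref{eq:1uw} satisfies $\scB^\prime v = G$ in $\scU^\prime$, since the second equation of \eqref{eq:1uw} reads $b((\mu,\rho), v) = G((\mu,\rho))$ for every $(\mu,\rho) \in \scU$. Hence the residual admits the clean representation $G - \scB^\prime v_h = \scB^\prime (v - v_h)$. By \cref{thm:dpgstaruw}, $\scB$ is a Hilbert-space bijection, so the same holds for $\scB^\prime$, and one has $\|\scB^\prime\| = \|\scB\|$ together with $\|(\scB^\prime)\inv\| = \|\scB\inv\|$. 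The standard elementary bounds for an invertible bounded operator then give
\[
\|\scB\|\inv\, \|\scB^\prime(v - v_h)\|_{\scU^\prime} \;\le\; \|v - v_h\|_\scV \;\le\; \|\scB\inv\|\, \|\scB^\prime(v - v_h)\|_{\scU^\prime},
\]
which is exactly \eqref{eq:EquivalenceErrorResiduals}.

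For identity \eqref{eq:ResidualCharacterization}, I would unfold the definitions. Combining \eqref{eq:b-form-defn} with $G((\mu,\rho)) = (f,\mu)_\som$ yields, for every $(\mu,\rho) \in L^2 \times Q(\LL_h)$,
\[
(G - \scB^\prime v_h)((\mu,\rho)) \;=\; (f - \LL_h v_h,\, \mu)_\som \;-\; \ip{\rho, v_h}_h.
\]
The right-hand side is a linear functional on $\scU$ that splits into one piece depending only on $\mu$ and another depending only on $\rho$. Since the norm on $\scU$ is the product Hilbert-space norm $\|(\mu,\rho)\|_\scU^2 = \|\mu\|_{L^2}^2 + \|\rho\|_{Q(\LL_h)}^2$, a short Cauchy--Schwarz optimisation shows that the squared dual norm of such a decoupled linear functional equals the sum of the squared dual norms of its two components. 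The first component has dual norm $\|\LL_h v_h - f\|_\som$ by the Riesz representation theorem on $L^2$, and the second has dual norm $\eta(v_h)$ by the definition \eqref{eq:abstracteta} (which is unaffected by the sign in front of $\ip{\rho, v_h}_h$, since replacing $\rho$ by $-\rho$ preserves $\|\rho\|_{Q(\LL_h)}$). Squaring and summing yields \eqref{eq:ResidualCharacterization}.

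The only delicate point is the product-norm splitting of the decoupled dual functional, together with verifying that the norm on $Q(\LL_h)$ used in \eqref{eq:abstracteta} coincides with the one induced on the $\rho$-slot of $\scU$. Once this bookkeeping is confirmed, the entire argument reduces to elementary mixed-method identities and presents no genuine analytical obstacle.
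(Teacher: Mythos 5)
Your proposal is correct and follows essentially the same route as the paper: both arguments hinge on the bijectivity of $\scB$ supplied by \cref{thm:dpgstaruw}, the representation $G-\scB^\prime v_h=\scB^\prime(v-v_h)$, and the splitting of the squared dual norm of the decoupled residual functional over the product space $L^2\times Q(\LL_h)$ to obtain \eqref{eq:ResidualCharacterization}. The only (cosmetic) difference is in \eqref{eq:EquivalenceErrorResiduals}, where you use elementary operator-norm bounds for the bijection $\scB^\prime$ in place of the paper's invocation of identity \eqref{eq:identity5} from \cref{prop:identities} followed by the equivalence $\|\scB\inv\|\inv\|\mu\|_\scU\le\normm{\mu}_\scU\le\|\scB\|\|\mu\|_\scU$; these are the same computation in different notation.
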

\begin{proof}
  In view of~\cref{eq:1uw}, $v-v_h$ satisfies
  \begin{equation}
    \label{eq:v_vh}
    \left\{
      \begin{alignedat}{3}
        &R_\scV  (v-v_h) + \scB ((\lambda, \sigma)) && =  -R_\scV v_h ,
        \\
        &\scB' (v-v_h) &&  = G - \scB' v_h.
      \end{alignedat}
    \right.
  \end{equation}
  By \cref{thm:dpgstaruw}, $\scB$ is a bijection.
  Hence, applying the identity \cref{eq:identity5} of~\cref{prop:identities} to the system~\cref{eq:v_vh}, we arrive at
  \begin{equation}
    \| v - v_h \|_\scV
    =
    \normm{G - \scB' v_h}_{\scU^\prime}
    =
    \sup_{\mu\in\scU}
    \frac{\ip{G - \scB' v_h,\mu}_{\scU}}{ \normm{\mu}_{\scU}}
    \,,
  \end{equation}
  where the second equality comes directly from definition~\cref{eq:DualOfEnergyNorm}.
  Next, recall that the norms $\|\cdot\|_\scU$ and $\normm{\,\cdot\,}_\scU$ are equivalent.
  Indeed, $\|\scB\inv\|\inv\|\mu\|_\scU \leq \normm{\mu}_{\scU} \leq \|\scB\|\|\mu\|_\scU$, for all $\mu\in\scU$.
  The first result,~\cref{eq:EquivalenceErrorResiduals}, now follows immediately.

  To arrive at~\cref{eq:ResidualCharacterization}, simply observe that
  \[
  \begin{aligned}
  \| G - \scB' v_h \|_{\scU'}^2
  &= 
  \sup_{\mu  \in L^2, \;\rho \in Q(\LL_h)}
  \frac{\big| (f, \mu)_\som - b((\mu, \rho), v_h) \big|^2}
  { \| (\mu, \rho) \|_{\scU}^2} 
  \\
  & 
  =
  \sup_{\mu  \in L^2, \;\rho \in Q(\LL_h)}
  \frac{ \big| (f - \LL_h v_h, \mu)_\som  - \ip{ \rho, v_h}_h \big|^2}
  {\| \mu\|_\som^2 + \| \rho \|_{Q(\LL_h)}^2}
  \\
  & 
  =
  \sup_{\mu  \in L^2}
  \frac{ \big| (f - \LL_h v_h, \mu)_\som \big|^2}{ \| \mu \|_\som^2}
  + 
  \sup_{\rho \in Q(\LL_h)}
  \frac{\big| \ip{ \rho, v_h}_h \big|^2}
  {\| \rho \|_{Q(\LL_h)}^2}
  \end{aligned}
  \]
  and the second result also follows. 
\end{proof}

\subsection{A posteriori error analysis for the Poisson example} 
\label{sub:analysis_of_a_complete_error_estimator}

In this subsection, we develop a computable quantity that is
equivalent to the function $\eta$ defined above, for the example of the DPG*
method for Poisson equation. We then provide a complete analysis of
reliability and efficiency of the resulting error estimator.

Recall the variational formulation derived in \cref{eg:poisson} for
Poisson's equation. Its bilinear form (see~\cref{eq:DPG*Poissonform})
is
\begin{equation}
  b((\vec{\sigma},\mu,\hat{\sigma}_n,\hat{\mu}),
  (\vec{\tau},\nu))
  =
  (
  (\vec{\sigma},\mu),\mcL_h (\vec{\tau},\nu))_\sOmega
  +
  \langle\hat{\mu},\vec{\tau}\cdot\bmn\rangle_h + \langle\hat{\sigma}_n,\nu\rangle_h
  \,,
\label{eq:DPGstarGeneral}
\end{equation}
where
$\mcL_h(\nu,\vec{\tau}) = (\vec{\tau} - \grad_h \nu,\, -\div_h
\vec{\tau})$.  In this subsection, we proceed by assuming, for
simplicity, that $\om \subset \RRR^2$, and that $\mesh$ is a
geometrically conforming triangular shape-regular mesh.  Let $\mcE$
denote the set of all mesh edges and let $\mcE_{\interior}\subset\mcE$
be the set of all interior edges of $\mesh.$ Let $h_E$ be the length
of any edge $E\in\mcE$.  Any $E \in \mcE_{\interior}$ has two adjacent
elements $K^+$ and $K^-$ such that $E = \bdry K^+ \cap \bdry K^-$.
Let
\begin{equation}
  \jump{\vec{\tau} \cdot \vec{n}}
  =
  \vec{\tau}_{K^+}\cdot\vec{n}_{K^+} + \vec{\tau}_{K^-}\cdot\vec{n}_{K^-}
  \,,
  \qquad
  \jump{\vec{\tau} \cdot \vec{n}^\perp}
  =
  \vec{n}_{K^+}^\perp\cdot\vec{\tau}_{K^+} + \vec{n}_{K^-}^\perp\cdot\vec{\tau}_{K^-}
  \,.
\end{equation}
Here, $\vec{n}_K^\perp$ is the tangential unit vector; i.e., if
$\vec{n}_K = (n_1,n_2)$ then $\vec{n}_K^\perp = (-n_2,n_1)$.  If
$E\in \mcE \setminus \mcE_{\interior}$ is an exterior edge on the
boundary of an element $K$, then with $\vec{n}$ equal to the outward unit
normal on $\d \om$, we simply set
$ \jump{\vec{\tau}\cdot \vec{n}} = \vec{\tau}_{K}\cdot\vec{n}$ and
$ \jump{\vec{\tau} \cdot \vec{n}^\perp} =
\vec{n}^\perp\cdot\vec{\tau}_{K}.$ Similarly, for any scalar function $\nu$
that may be discontinuous across an interface
$E \in \mcE_{\interior}$, we define 
\begin{equation}
  \jump{\nu \vec n}
  =
  \nu_{K^+}\vec{n}_{K^+} + \nu_{K^-}\vec{n}_{K^-}
\end{equation}
and set $\jump{\nu} = \nu_{K}\vec{n}$ on boundary edges $E \in \mcE \setminus \mcE_{\interior}$.

The setting for the DPG* method for the Poisson equation (of
\cref{eg:poisson}), including its discrete spaces $\scU_h, \scV_h$, is
described in \cref{sub:Application_to_the_Poisson_example}.  We
continue with these settings in this subsection.  Let $h$ denote the
maximum of $h_E$ over all $E \in \mcE$. When mesh dependent quantities
$A$ and $B$ satisfy $A \le C B$, with a positive constant $C$
independent of $h$, then we write $ A \lesssim B$. When $A \lesssim B$
and $B \lesssim A$, then we write $A \eqsim B$.  The main result of
this subsection is the next theorem.

\begin{theorem}  \label{thm:PossionDualDPGAPosterioriErrrorEstimator}
  Under the above settings, suppose $(\vec p, v) \in \scV$ and
  $ (\vec{p}_h, v_h) \in \scV_h$ are the exact solution and the
  discrete DPG* solution of the Laplace problem, respectively. Then
  \[
    \| (\vec p, v) - (\vec{p}_h, v_h) \|_\scV \;\eqsim \;
    \eta_i(\vec{p}_h, v_h)
  \]
  for  $i\in\{1,2\}$, where  the computable error  estimators $\eta_i$
  are defined by
  \begin{subequations}
    \begin{align}
      \eta_1(\vec{p}_h, v_h)^2
      & =
      \big\|\mcL (\vec{p}_h, v_h) - f \big\|_\sOmega^2
      +
      \sum_{E\in\mcE_\interior}
      h_E
      \big\|\jump{\vec{p}_{h} \cdot \vec n}\big\|_{L^2(E)}^2
      +
      \sum_{E\in\mcE}
      h_E
      \big\|\jump{v_h \vec n}\big\|_{H^1(E)}^2
      \,,
    \label{eq:etastarH1}
      \\
      \eta_2(\vec{p}_h, v_h)^2
      & =
      \big\|\mcL (\vec{p}_h, v_h) - f \big\|_\sOmega^2
      +
      \sum_{E\in\mcE_\interior}
      h_E
      \big\|\jump{\vec{p}_{h} \cdot \vec n }\big\|_{L^2(E)}^2
      +
      \sum_{E\in\mcE}
      h_E^{-1}
      \big\|\jump{v_h  \vec n}\big\|_{L^2(E)}^2
      \,.
    \label{eq:etastar}
    \end{align}
    \end{subequations}
\end{theorem}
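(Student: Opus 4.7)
The starting point is \cref{thm:DualDPGAPosterioriErrrorEstimator-0}, which (noting that the bijectivity hypothesis for $\mcL^\ast$ holds on the convex-reducible Laplace problem via elliptic regularity, and that $\scB$ is a bijection for this ultraweak formulation) gives $\|(\vec p,v)-(\vec p_h,v_h)\|_\scV^2 \eqsim \|\mcL_h(\vec p_h,v_h)-f\|_\sOmega^2 + \eta(\vec p_h,v_h)^2$, with $\eta$ as in~\cref{eq:abstracteta}. Since the volumetric residual appears verbatim in both $\eta_1^2$ and $\eta_2^2$, the task is reduced to showing $\eta(\vec p_h,v_h)^2 \eqsim \sum_{E\in\mcE_\interior} h_E\|\jump{\vec p_h\cdot\vec n}\|_{L^2(E)}^2 + T_i(v_h)$ where $T_1(v_h)=\sum_{E\in\mcE}h_E\|\jump{v_h\vec n}\|_{H^1(E)}^2$ and $T_2(v_h)=\sum_{E\in\mcE}h_E^{-1}\|\jump{v_h\vec n}\|_{L^2(E)}^2$. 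Using the explicit bilinear form~\cref{eq:DPG*Poissonform} together with the facts that any $\hat\mu\in H^{1/2}_0(\partial\oh)$ is single-valued and vanishes on $\partial\Omega$ while any $\hat\sigma_n\in H^{-1/2}(\partial\oh)$ is the oriented normal trace of an $H(\div,\Omega)$ field, I would element-wise integrate by parts to obtain
\[
\ip{\rho,(\vec p_h,v_h)}_h = \sum_{E\in\mcE_\interior} \ip{\hat\mu,\jump{\vec p_h\cdot\vec n}}_E + \sum_{E\in\mcE} \ip{\hat\sigma_n,\jump{v_h\vec n}\cdot\vec n_E}_E,
\]
for $\rho=(\hat\sigma_n,\hat\mu)$. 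Since $\hat\mu$ and $\hat\sigma_n$ are independent in the supremum, $\eta(\vec p_h,v_h)^2$ decouples as the sum of the squared dual norms of $\jump{\vec p_h\cdot\vec n}$ (over $\mcE_\interior$) and of $\jump{v_h\vec n}\cdot\vec n_E$ (over $\mcE$).

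The first piece is the dual norm of a piecewise polynomial edge jump against the minimum-energy $H^1_0(\Omega)$-extension norm. For the upper bound (reliability), I would apply the localized trace inequality $\|\hat\mu\|_{L^2(E)}\lesssim h_E^{1/2}\|\hat\mu\|_{H^{1/2}_0(\partial\oh)}$ on each patch adjacent to $E$, yielding control by $\big(\sum_E h_E\|\jump{\vec p_h\cdot\vec n}\|_{L^2(E)}^2\big)^{1/2}$. For the lower bound (efficiency), I would use the classical Verf\"urth edge-bubble construction: multiply the (extended) jump by a quadratic edge bubble supported in the two neighboring triangles, producing an $H^1_0$-test whose $L^2(E)$-pairing with the jump dominates $\|\jump{\vec p_h\cdot\vec n}\|_{L^2(E)}^2$ while its $H^1(\Omega)$-norm scales like $h_E^{-1/2}\|\jump{\vec p_h\cdot\vec n}\|_{L^2(E)}$; shape regularity guarantees uniformity in $h$.

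The second piece is analogous but dual, measured against the minimum-energy $H(\div,\Omega)$-extension norm. For the upper bound I would construct, for each edge $E$, an $H(\div,\Omega)$-field supported on the two neighboring triangles that realizes a prescribed normal trace on $E$ and has controlled $\div$; pairing with $\jump{v_h\vec n}\cdot\vec n_E$ and exploiting edge-scaling then yields the $h_E$-weighted $H^1(E)$-norm contribution featured in $\eta_1$. A matching lower bound is obtained via a Raviart--Thomas-type edge bubble locally corrected to lie in $H(\div,\Omega)$. Finally, the equivalence $\eta_1\eqsim\eta_2$ is an immediate consequence of the polynomial inverse estimate $\|\phi\|_{H^1(E)}\eqsim h_E^{-1}\|\phi\|_{L^2(E)}$ applied to the piecewise polynomial jump $\jump{v_h\vec n}|_E$. \emph{The main obstacle is the $H^{-1/2}$-dual analysis}: the quotient norm arises from an $H(\div,\Omega)$, not $H^1(\Omega)$, extension, so neither the standard trace theorem nor the familiar scalar edge bubbles apply directly. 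One must design a locally supported $H(\div)$-lifting that captures the normal component on $E$ while keeping its divergence controlled, and prove the correct $h_E$-scaling under shape regularity.
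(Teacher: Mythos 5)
Your skeleton matches the paper's: reduce to \cref{thm:DualDPGAPosterioriErrrorEstimator-0}, split $\eta$ into the flux-jump and trace-jump contributions, and prove efficiency with edge bubbles (your bubble arguments are essentially \cref{lem:inf-H1-sup,lem:inf-Hdiv-sup}, including the $\curl$-of-a-scalar-bubble trick needed for the $H(\div)$ side). The genuine gap is in both reliability bounds: you never invoke the discrete orthogonality relations $\langle\hat{\mu},\vec{p}_h\cdot\vec{n}\rangle_h=0$ and $\langle\hat{\sigma}_n,v_h\rangle_h=0$ (for $\hat\mu$, $\hat\sigma_n$ in the conforming discrete trace spaces), which follow from \eqref{eq:dpgstar-poisson-b} and are the entire reason the $h_E$-weighted estimators are reliable. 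Your proposed per-edge bound $\|\hat\mu\|_{L^2(E)}\lesssim h_E^{1/2}\|\hat\mu\|_{H^{1/2}_0(\partial\oh)}$ cannot be summed: what reliability requires is $\sum_{E}h_E^{-1}\|\mu\|_{L^2(E)}^2\lesssim\|\mu\|_{H^1(\om)}^2$, which is false for a generic $\mu\in H^1_0(\om)$ (a fixed smooth bump already gives a left-hand side of order $h^{-2}$). Concretely, if $\jump{\vec p_h\cdot\vec n}\equiv 1$ on every interior edge, then $\langle\mu,\jump{\vec p_h\cdot\vec n}\rangle_h\approx h^{-1}$ for a fixed bump $\mu$ while $\big(\sum_E h_E\|1\|_{L^2(E)}^2\big)^{1/2}=O(1)$, so the claimed upper bound fails. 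The paper's fix is to subtract a commuting quasi-interpolant, $\langle\mu,\vec p_h\cdot\vec n\rangle_h=\langle\mu-\Igrad\mu,\vec p_h\cdot\vec n\rangle_h$, legitimate only because of the orthogonality, and then use \eqref{eq:Clement}; the $H(\div)$ side additionally needs the regular decomposition \eqref{eq:RegularDecomposition} plus the commutativity $\curl\circ\Igrad=\Idiv\circ\curl$ to dispose of the $\Idiv$ terms (see \cref{lem:FluxBounds,lem:TraceBounds}).

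A second, related error: the equivalence $\eta_1\eqsim\eta_2$ is \emph{not} an immediate consequence of an inverse estimate. The inverse inequality gives only $|\phi|_{H^1(E)}\lesssim h_E^{-1}\|\phi\|_{L^2(E)}$; the reverse direction is a Poincar\'e inequality on $E$ and is false for nonzero constants. It holds here only because $\int_E\jump{v_h\vec n}=0$ on every edge, which is again a consequence of the discrete orthogonality against lowest-order Raviart--Thomas normal traces. So the missing idea throughout is the same one: the estimator's weights are correct only for functions $(\vec p_h,v_h)$ satisfying the Galerkin conditions of \eqref{eq:dpgstar-poisson-b}, and every reliability step must exploit that.
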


The remainder of this subsection is devoted to proving this
theorem. The main idea is to apply
\cref{thm:DualDPGAPosterioriErrrorEstimator-0}, but some
intermediate results need to be established first. To this end,
recall that there exists a $H^1$-norm minimum energy extension
$\calliE$ that provides a continuous right inverse of
$\tr: H^1(\om) \to H^{\onehalf}(\d\oh)$.
Indeed, for all $\hat{w}\in H^{\onehalf}(\bdry\mesh)$, $\calliE$ is defined using the pre-image set $\tr^{-1}\{ \hat w\}$ simply as
\begin{equation}
  \calliE(\hat{w})
  =
  \argmin_{w \in \tr^{-1} \{ \hat w\}}
  \|w\|_{H^1(\om)}.
  \label{eq:EnergyExtensionH1}
\end{equation}
Similarly, for all $\hat{q}_n\in H^{\minusonehalf}(\bdry\mesh)$, the continuous right inverse of $\tr_n: H(\div,\om) \to H^{\minusonehalf}(\d\oh)$ is defined
\begin{equation}
  \Ed(\hat{q}_n)
  =
  \argmin_{\vec{q}_n \in \tr_n^{-1} \{\hat{q}\}}
  \|\,\vec{q}\,\|_{H(\div,\om)}
  .
  \label{eq:EnergyExtensionHdiv}
\end{equation}
Clearly these operators can also also be applied element by element.

Before establishing a number of lemmas, we pause to construct two helpful observations.
First, for any
$\hat{q}_n \in \hat{P}_p(\d\oh)$, let $\hat{q}_E$ denote the function in
$\hat{P}_p(\d\oh)$ that vanishes on all edges of $\mcE$, except on the edge
$E$ where it equals $\hat{q}|_E$. Observe that $\Ed(\hat{q}_E)$ is supported
only on
$\om_E = \bigcup\{ K\in\mesh : \mathrm{meas}(\bdry K \cap
E)\neq\emptyset\}$.  Second, let $b_E$ denote the edge bubble of $E$
(i.e., the product of the barycentric coordinates of the endpoints of
$E$) and define
$\tilde{P}^0_{p+2}(\d\oh) = \{ \mu \in \tr( P_{p+2}(\oh) \cap
H_0^1(\om)): $ on any edge $E\in \mcE,$ $ \mu|_E = r_p b_E$ for some
$r_p \in P_p(E)\}.$ Likewise, for any $\hat{\mu}\in\tilde{P}^0_{p+2}(\d\oh)$,
the function $\calliE(\hat{\mu})$ is supported only on $\Omega_E$.

Our first lemma may be
thought of as an inf-sup condition involving the space of edge bubbles $\tilde{P}^0_{p+2}(\d\oh)$.

\begin{lemma}
  \label{lem:inf-H1-sup}
  For any degree $p\ge 0$ and for any $\vec{q}_h \in P_p(\oh)^2$,
  \[
    \sum_{E \in \Eint} h_E \Big\| \jump{\vec{q}_h \cdot \vec n } \Big\|_{L^2(E)}^2
    \lesssim
    \sup_{\hat{\mu} \in \tilde{P}^0_{p+2}(\d\oh)}
    \frac{\ip{ \vec{q}_h \cdot \vec n, \hat{\mu}}_h^2 }{\| \Eg(\hat{\mu}) \|_{H^1(\om)}^2}.
  \]
\end{lemma}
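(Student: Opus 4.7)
The plan is to exhibit a single test function $\hat{\mu}\in\tilde{P}^0_{p+2}(\d\oh)$ whose pairing with $\vec q_h\cdot \vec n$ reproduces the jump-seminorm on the left, and whose minimum-energy extension has $H^1$-norm of the right order; then a bound on the supremum will follow immediately. Concretely, for each interior edge $E\in\Eint$, set $r_E=\jump{\vec{q}_h\cdot \vec n}|_E\in P_p(E)$ and let $\hat\mu_E$ denote the function in $\tilde{P}^0_{p+2}(\d\oh)$ whose trace on $E$ equals $b_E r_E$ and which vanishes on every other edge. Such an $\hat\mu_E$ indeed lies in $\tilde P^0_{p+2}(\d\oh)$ because on $E$ it has the required bubble structure and vanishes at both endpoints of $E$, and it can be continued as an $H^1_0$-function supported in the edge patch $\omega_E$. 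I will then take
\[
\hat\mu = \sum_{E\in \Eint} h_E\,\hat\mu_E.
\]

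The lower bound for the numerator is the easy step: since $\hat\mu_E$ is supported on $E$ and $\hat\mu$ is single-valued on interior edges and zero on boundary edges,
\[
\ip{\vec{q}_h\cdot \vec n,\hat\mu}_h
= \sum_{E\in\Eint} h_E\int_E r_E^2 b_E
\;\gtrsim\; \sum_{E\in\Eint} h_E \|r_E\|_{L^2(E)}^2,
\]
where I invoke the standard edge-bubble norm equivalence $\int_E r^2 b_E\eqsim \|r\|_{L^2(E)}^2$ for $r\in P_p(E)$ (proved by scaling to a reference edge, where it is just equivalence of norms on the finite-dimensional space $P_p$).

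The main technical step is the upper bound for $\|\calliE(\hat\mu)\|_{H^1(\om)}^2$. Because $\calliE$ is the $H^1$-minimum energy extension, it suffices to bound the energy of some explicit extension. On a reference patch of unit diameter, any polynomial trace of degree $\le p+2$ of the bubble form admits a polynomial extension into $H^1_0$ whose $H^1$-norm is controlled by the $L^2$-norm of the trace (by norm equivalence on a finite-dimensional space of bounded dimension). Transporting this extension back to $\omega_E$ by a standard scaling argument yields
\[
\|\calliE(\hat\mu_E)\|_{H^1(\omega_E)}^2 \;\lesssim\; h_E^{-1}\|\hat\mu_E\|_{L^2(E)}^2 \;\lesssim\; h_E^{-1}\|r_E\|_{L^2(E)}^2.
\]
Since the patches $\omega_E$ have finite overlap by shape regularity, summing and using the $h_E$-weighting produces
\[
\|\calliE(\hat\mu)\|_{H^1(\om)}^2 \;\lesssim\; \sum_{E\in\Eint} h_E^{2}\cdot h_E^{-1}\|r_E\|_{L^2(E)}^2 \;=\; \sum_{E\in\Eint} h_E \|r_E\|_{L^2(E)}^2.
\]

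Combining the two bounds gives
\[
\frac{\ip{\vec{q}_h\cdot \vec n,\hat\mu}_h^2}{\|\calliE(\hat\mu)\|_{H^1(\om)}^2}
\;\gtrsim\; \frac{\bigl(\sum_{E}h_E\|r_E\|_{L^2(E)}^2\bigr)^2}{\sum_{E}h_E\|r_E\|_{L^2(E)}^2} \;=\; \sum_{E\in\Eint} h_E \|r_E\|_{L^2(E)}^2,
\]
which is exactly the claimed estimate. The only subtle point is the scaling step that yields $\|\calliE(\hat\mu_E)\|_{H^1(\omega_E)}^2\lesssim h_E^{-1}\|\hat\mu_E\|_{L^2(E)}^2$; everything else is routine edge-bubble and Cauchy–Schwarz manipulation. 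The weighting $c_E=h_E$ in the definition of $\hat\mu$ is not arbitrary but is the one that equalizes Cauchy–Schwarz in the final ratio.
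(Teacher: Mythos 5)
Your proof is correct and rests on exactly the same ingredients as the paper's: the edge-bubble norm equivalence $(b_E r,r)_E\eqsim\|r\|_{L^2(E)}^2$, the scaled extension bound $|\calliE(b_E r_E)|_{H^1(\om_E)}^2\lesssim h_E^{-1}\|r\|_{L^2(E)}^2$, and finite overlap of the patches $\om_E$. The only difference is cosmetic: you assemble the global estimate by exhibiting the single weighted test function $\hat\mu=\sum_E h_E\hat\mu_E$ (with the weight chosen to saturate Cauchy--Schwarz), whereas the paper bounds each edge term by a local supremum and then combines them using the Cartesian-product structure of $\tilde{P}^0_{p+2}(\d\oh)$; both routes are valid and essentially equivalent.
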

\begin{proof}
  We shall use the following two estimates that can be proved by scaling
  arguments using finite dimensionality (see
  e.g.~\cite{verfurth1996review}).  For all $\hat{w} \in \tr( P_p(\oh)
  \cap H_0^1(\om))$, 
  \begin{gather}
    \|{b}_E \hat{w}\|_{L^2(E)}^2
    \leq
    \|\hat{w}\|_{L^2(E)}^2
    \lesssim
    ({b}_E\hat{w}, \hat w)_E
    \,,
    \label{eq:BubbleBounds1}
    \\
    |\calliE({b}_E \hat{w}_E)|_{H^1({\Omega_E})}
    \lesssim
    h_E^{\minusonehalf} \|\hat{w}\|_{L^2(E)},    
    \label{eq:BubbleBounds2}
  \end{gather}
  where $(\cdot, \cdot)_E$ denotes the inner product of $L^2(E)$ and
  $\hat{w}_E$ is as defined above. For any $\vec{q}_h \in P_p(\oh)$ with
  nontrivial $\jump{\vec{q}_h \cdot \vec n}_E$, we have
  \begin{equation}
    \label{eq:4}
  \begin{aligned}
    h_E \big\|\jump{\vec{q}_{h} \cdot \vec n}\big\|_{L^2(E)}^2
    & \lesssim
    h_E (b_E \jump{\vec{q}_h \cdot \vec n}, \jump{\vec{q}_h \cdot n})_E
    && \text{ by }~\eqref{eq:BubbleBounds1}
    \\ 
    & \lesssim 
      \frac{ (b_E \jump{\vec{q}_h \cdot \vec n}, \jump{\vec{q}_h \cdot \vec{n}})_E}
      {    |\calliE({b}_E \jump{\vec{q}_h \cdot \vec{n}}_E )|_{H^1({\Omega_E})}}
      h_E^{\onehalf}
      \| \jump{\vec{q}_h \cdot \vec{n}}_E \|_{L^2(E)}
    && \text{ by }~\eqref{eq:BubbleBounds2},
  \end{aligned}    
  \end{equation}
  which, after canceling
  $ h_E^{\onehalf} \| \jump{\vec{q}_h \cdot \vec{n}}_E \|_{L^2(E)}$
  from both sides, provides a local version of the result we want
  to prove.

  To get to the global estimate, we will accumulate the contributions
  of jumps across each edge. For this, its useful to observe that for all
  $\hat{\mu} \in \tilde{P}^0_{p+2}(\d\oh)$, we have
  \begin{equation}
    |\calliE(\hat{\mu})|_{H^1(\Omega)}^2
    \lesssim
    \sum_{E\in\mcE_\interior} |\calliE(\hat{\mu}_E)|_{H^1({\Omega_E})}^2
    \,.
  \label{eq:BubbleNormBound}
  \end{equation}
  This can be seen beginning from the linearity of $\calliE$
  and the fact that $\mcE_K = \{ E \in \mcE:
  \mathrm{meas}(E\cap\partial K)\neq 0\}$ has fixed finite cardinality:  
\begin{align}
  \lefteqn{|\calliE  (\hat{\mu})|_{H^1(\Omega)}^2
   =
    \bigg|\sum_{E\in\mcE_\interior}\calliE(\hat{\mu}_E)\bigg|_{H^1(\Omega)}^2
  =
  \sum_{K\in\mesh}
    \Big|\sum_{E\in\mcE_\interior}\calliE(\hat{\mu}_E)\Big|_{H^1(K)}^2}
  \\
  \quad & =
  \sum_{K\in\mesh}
  \Big|\sum_{E \in \mcE_K}\calliE(\hat{\mu}_E)\Big|_{H^1(K)}^2
  \lesssim
  \sum_{K\in\mesh}
  \sum_{E \in \mcE_K}
    |\calliE(\hat{\mu}_E)|_{H^1(K)}^2
  \lesssim
  \sum_{E\in\mcE_\interior}
  |\calliE(\hat{\mu}_E)|_{H^1({\Omega_E})}^2,
\end{align}
which proves~\eqref{eq:BubbleNormBound}.

We can now complete the proof as follows. Starting from~\eqref{eq:4},
\begin{align}
  \sum_{E\in\mcE_\interior}
  h_E &
  \Big\|\jump{\vec{q}_h \cdot \vec{n}}\Big\|_{L^2(E)}^2
        \lesssim
    \sum_{E\in\mcE_\interior}
      \frac{ (b_E \jump{\vec{q}_h \cdot \vec n}, \jump{\vec{q}_h \cdot \vec{n}})_E^2}
      {    |\calliE({b}_E \jump{\vec{q}_h \cdot \vec{n}}_E )|_{H^1({\Omega_E})}^2}
  \\
  &
    \le
    \sum_{E\in\mcE_\interior}
    \sup_{\hat{\mu} \in \tilde{P}^0_{p+2}(\d\oh)}
    \frac{ ( \hat{\mu}_E, \jump{\vec{q}_h \cdot \vec{n}})_E^2}
    {    |\calliE( \hat{\mu}_E )|_{H^1({\Omega_E})}^2}
=
    \sup_{\hat{\mu} \in \tilde{P}^0_{p+2}(\d\oh)}
    \,
    \frac
    {\Big(\sum_{E\in\mcE_\interior}(\hat{\mu}_E,\jump{\vec{q}_{h}\cdot\vec n})_E\Big)^2}
    {\sum_{E\in\mcE_\interior}|\calliE(\hat{\mu}_E)|_{H^1({\Omega_E})}^2}
    ,
  \end{align}
  where, in the equality, we have exploited a property of suprema over components of a
  Cartesian product space (noting that the space $\tilde{P}^0_{p+2}(\d\oh)$ is
  the Cartesian product of $b_E P_p(E)$ over all interior edges
  $E$). Now, the result follows by noting that the numerator above
  equals $\ip{ \mu, \vec{q}_h \cdot \vec n}_h^2$ and by bounding the
  denominator using \eqref{eq:BubbleNormBound} and the Poincar\'e
  inequality.
\end{proof}

\begin{lemma}
  \label{lem:inf-Hdiv-sup}
  For any degree $p\ge 1$ and for any $w_h \in P_p(\oh)$,
  \[
    \sum_{E \in \Eint} h_E \Big| \jump{ w_h \vec n } \Big|_{H^1(E)}^2
    \lesssim
    \sup_{\hat{\sigma}\cdot \vec n \in \hat{P}_p(\d\oh)}
    \frac{\ip{ \hat{\sigma}\cdot \vec n, w_h }_h^2 }
    {\| \Ed(\hat{\sigma}\cdot \vec n) \|_{H(\div, \om)}^2},
  \]
  where $\hat{P}_p(\d\oh)$ is as defined in~\eqref{eq:tr-fl}.
\end{lemma}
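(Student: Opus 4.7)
The plan is to mirror the proof of \cref{lem:inf-H1-sup}, exchanging the roles of $H^1$ and $H(\div)$ and replacing the edge-$L^2$ norm by the $H^1(E)$-seminorm. For each interior edge $E=\bdry K^+\cap \bdry K^-$, let $j_E = (w_h|_{K^+}-w_h|_{K^-})|_E\in P_p(E)$, so on straight edges $|\jump{w_h\vec n}|_{H^1(E)}^2 = \|\partial_t j_E\|_{L^2(E)}^2$, where $\partial_t$ denotes the tangential derivative along $E$. The goal is to construct a localized test function $\hat\sigma_E\cdot\vec n\in\hat{P}_p(\d\oh)$, supported on $E$, whose pairing with $w_h$ is bounded below by $\|\partial_t j_E\|_{L^2(E)}^2$ and whose minimum-energy extension satisfies $\|\Ed(\hat\sigma_E\cdot\vec n)\|_{H(\div,\om)}^2\lesssim h_E^{-1}\|\partial_t j_E\|_{L^2(E)}^2$. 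These two bounds, combined with the optimal Cauchy--Schwarz weight $h_E$, will yield the advertised global estimate.

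The key choice is $\hat\sigma_E\cdot\vec n = -\partial_t(b_E\partial_t j_E)$ on $E$ (with the orientation of $\vec n_{K^+}$) and zero on all other edges. Because $b_E$ vanishes at the endpoints of $E$, integration by parts on $E$ leaves no boundary term and gives
\[
\ip{\hat\sigma_E\cdot\vec n, w_h}_h = \int_E (\hat\sigma_E\cdot\vec n)\,j_E = \int_E b_E(\partial_t j_E)^2 \eqsim \|\partial_t j_E\|_{L^2(E)}^2,
\]
the equivalence being the standard polynomial bubble estimate used in~\eqref{eq:BubbleBounds1}. The same boundary vanishing of $b_E$ enforces the compatibility condition $\int_E\hat\sigma_E\cdot\vec n = 0$, which permits a \emph{divergence-free} extension: set $\vec q_E = \nabla^\perp\psi$ on $\Omega_E$, where $\psi\in H^1(\Omega_E)$ is the minimum-$H^1$ extension of the piecewise trace equal to $-b_E\partial_t j_E$ on $E$ and to $0$ on the remaining edges of $\bdry K^\pm$; this trace is continuous since $b_E$ vanishes at the vertices of $E$. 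A standard polynomial scaling argument for the edge bubble yields $\|\psi\|_{H^1(\Omega_E)}^2\lesssim h_E^{-1}\|\partial_t j_E\|_{L^2(E)}^2$. Since $\vec q_E\cdot\vec n = \partial_t\psi$ realizes $\hat\sigma_E\cdot\vec n$ on $E$, vanishes on the rest of $\bdry\Omega_E$, and has $\div\vec q_E=0$, its zero-extension outside $\Omega_E$ is an admissible $H(\div,\om)$-field, and the desired bound on $\|\Ed(\hat\sigma_E\cdot\vec n)\|_{H(\div,\om)}^2$ follows.

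Global aggregation proceeds exactly as in \cref{lem:inf-H1-sup}. I will assemble $\hat\sigma\cdot\vec n = \sum_{E\in\Eint} h_E\,\hat\sigma_E\cdot\vec n$ and use shape regularity (finite overlap of the patches $\Omega_E$) to deduce $\|\Ed(\hat\sigma\cdot\vec n)\|_{H(\div,\om)}^2\lesssim \sum_E h_E^2\|\vec q_E\|_{H(\div,\Omega_E)}^2\lesssim \sum_E h_E\|\partial_t j_E\|_{L^2(E)}^2$, while the numerator is $\ip{\hat\sigma\cdot\vec n, w_h}_h\eqsim\sum_E h_E\|\partial_t j_E\|_{L^2(E)}^2$; squaring and dividing gives the claimed inequality. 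The main technical obstacle is the scaling estimate $\|\psi\|_{H^1(\Omega_E)}^2\lesssim h_E^{-1}\|\partial_t j_E\|_{L^2(E)}^2$, which reduces to the polynomial inverse inequality $\|b_E\partial_t j_E\|_{H^{1/2}_{00}(E)}^2\lesssim h_E^{-1}\|\partial_t j_E\|_{L^2(E)}^2$; anything weaker than the power $h_E^{-1}$ here would spoil the final scaling and produce an estimator of the wrong order.
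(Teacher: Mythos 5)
Your proof is correct and is essentially the paper's own argument: your stream function $\psi$ (the minimum-energy $H^1$ extension of the bubble-weighted tangential jump) is exactly the paper's $\phi_E = \Eg(\jump{\grad w_h\cdot\vec n^\perp}_E\, b_E)$, your divergence-free field $\nabla^\perp\psi$ is the paper's $\curl\phi_E$, and the edge integration by parts, the bubble equivalence~\eqref{eq:BubbleBounds1}, and the $h_E^{-1/2}$ scaling~\eqref{eq:BubbleBounds2} all appear in the same roles. The only cosmetic difference is that you build the normal-trace datum $-\partial_t(b_E\partial_t j_E)$ first and then lift it, whereas the paper builds $\phi_E$ first and reads off its curl's normal trace; your weighted-sum aggregation $\sum_E h_E\,\hat\sigma_E\cdot\vec n$ is an equivalent packaging of the paper's Cartesian-product supremum step.
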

\begin{proof}
  For any $w_h \in P_p(\oh)$ and any $E \in \mcE$, the function
  $\jump{\vec{n}^\perp \cdot \grad w_h }$ represents the tangential
  derivative of the jump of $w_h$ across $E$. Then
  $\phi_E = \Eg( \jump{\grad w_h \cdot \vec{n}^\perp}_E b_E)$ is
  supported on $\om_E$ and the trace of $\phi_E$ vanishes on all edges except
  $E$.  Let $\ohE = \{ K \in \oh: K \subseteq \om_E\}$.  Using
  the vector curl of the scalar function $\phi_E$, by an application 
  of \eqref{eq:BubbleBounds1}, we have  
  \begin{align}
    \big|\jump{w_h \vec n}\big|_{H^1(E)}^2
    & \lesssim (b_E \jump{\vec{n}^\perp \cdot \grad w_h },
      \jump{\vec{n}^\perp \cdot \grad w_h })_E
    \\
    & = \int_E \phi_E \jump{\vec{n}^\perp \cdot \grad w_h }
      = \sum_{K \in \ohE} \int_{\d K} \phi_E
      \vec{n}^\perp \cdot \grad w_h
    \\
    & =
      \sum_{K \in \ohE} 
      (\curl \phi_E, \grad w_h)_K
      =
      \sum_{K \in \ohE}
      \int_{\d K} \vec{n} \cdot \curl \phi_E w_h
     = (\curl \phi_E,  \jump{w_h \vec n})_E,
  \end{align}
  where we have also used the Stokes and the divergence theorems in
  succession.  Now, noting that
  $\| \curl \phi_E\|_{H(\div, \om_E)} = |\phi_E|_{H^1(\om_E)} = |\Eg(
  \jump{\grad w_h \cdot \vec{n}^\perp}_E b_E)|_{H^1(\om_E)}$ we deduce
  using~\eqref{eq:BubbleBounds2} that
  $\| \curl \phi_E\|_{H(\div, \om_E)} \lesssim h_E^{\minusonehalf}
  |\jump{w_h \vec n}|_{H^1(E)}.$ Hence
  \begin{align}
     \big|\jump{w_h \vec n}\big|_{H^1(E)}^2
    & \lesssim
      \frac{( \curl \phi_E, \jump{w_h \vec n})_E }
      {\| \curl \phi_E \|_{H(\div, \om_E)}}
      \| \curl \phi_E \|_{H(\div, \om_E)}
      \lesssim
            \frac{( \curl \phi_E, \jump{w_h \vec n})_E }
      {\| \curl \phi_E \|_{H(\div, \om_E)}}
     h_E^{\minusonehalf} \big|\jump{w_h \vec n}\big|_{H^1(E)}.
  \end{align}
  Together with the minimal extension property
  $\| \Ed(\vec{n} \cdot \curl \phi_E) \|_{H(\div, \om)} \le \| \curl \phi_E
  \|_{H(\div, \om)},$ this implies that
  \begin{align}
    h_E \Big|\jump{w_h \vec n}\Big|_{H^1(E)}^2
    & \lesssim
      \frac{( \curl \phi_E, \jump{w_h \vec n})_E }
      {\| \Ed(\vec n \cdot \curl \phi_E) \|_{H (\div, \om_E)}^2}
      \le
      \sup_{\hat{\sigma}_E \cdot \vec n \in \hat{P}_p(E)}
      \frac{ \ip{ \hat{\sigma}_E \cdot \vec n, w_h}_h }
      {\| \Ed( \hat{\sigma}_E \cdot \vec n) \|_{H (\div, \om_E)}^2}
      ,
  \end{align}
  where $\hat{P}_p(E)$ denotes the subspace of functions in
  $\hat{P}_p(\oh)$ supported on $E$. We have thus arrived at a local
  version of the desired inequality. By proving an analogue of
  \eqref{eq:BubbleNormBound} for $\Ed$, and following along the lines of the
  proof of \cref{lem:inf-H1-sup}, we finish the proof.
\end{proof}

Since the suprema in \cref{lem:inf-H1-sup,lem:inf-Hdiv-sup} are related to the function $\eta$ in
\eqref{eq:abstracteta}, these lemmas can be thought of providing lower
bounds, often called {\em efficiency} estimates in the analysis of
{\it a posteriori} estimators. To prove upper bounds, also called {\em
  reliability} estimates, we need some additional tools. Recall that
any $\vec{\sigma}\in H(\div,\Omega)$ may be decomposed using the
so-called {\em regular decomposition} as
$\vec{\sigma} = \curl(\varphi_{\vec{\sigma}}) +
\vec{\psi}_{\vec{\sigma}}$ such that
\begin{equation}
  \|\varphi_{\vec{\sigma}}\|_{H^1(\Omega)}
  +
  \|\vec{\psi}_{\vec{\sigma}}\|_{H^1(\Omega)}
  \lesssim
  \|\vec{\sigma}\|_{H(\div,\Omega)}
  \,.
  \label{eq:RegularDecomposition}
\end{equation}
We shall also need low-regularity \textit{commuting
  quasi-interpolators} of \cite{demlow2014posteriori}, built using
refinements of earlier ideas in~\cite{clement1975approximation,schoberl2008posteriori}.
Namely, there exist operators $\Igrad: H^1(\Omega)\to P_1(\oh) \cap H^1(\om)$ and
$\Idiv :H(\div,\Omega) \to \mcR\mcT_0(\mesh)\cap H(\div,\om)$, such
that $\curl\circ\Igrad = \Idiv \circ\curl$.  Here,
$\mcR\mcT_0(\mesh) = P_0(\oh)^2 + \vec{x} P_0(\oh)$, the lowest order
Raviart--Thomas space. In addition, the following inequalities
\cite[Lemma~6]{demlow2014posteriori} hold for all $\mu \in H^1(\om)$
and $\vec{\sigma} \in H(\div, \om)$:
\begin{subequations}
  \begin{gather}
    \sum_{E \in \mcE} 
    h_E^{-1}\|\mu-{\Igrad} \mu\|_{L^2(E)}^2
    \lesssim
     \|\mu\|_{H^1({\Omega})}^2
    \,,
    \label{eq:Clement}
    \\
    \sum_{E \in \mcE}
    h_E^{-1} \|\varphi_{\vec{\sigma}}-{\Igrad} \varphi_{\vec{\sigma}}\|_{L^2(E)}^2
    +
    h_E^{-1} \|(\vec{\psi}_{\vec{\sigma}}-{ {\Idiv}} \vec{\psi}_{\vec{\sigma}})
    \cdot\vec{n}\|_{L^2(E)}^2
    \lesssim
    \|\vec{\sigma}\|_{H(\div,{\Omega})}^2
    \,.
    \label{eq:HdivInterpolant}
  \end{gather}
\end{subequations}
These results also hold with $H^1(\Omega)$ replaced by $H^1_0(\Omega)$ and $H(\div,\Omega)$ replaced by $H_0(\div,\Omega)$.

\begin{lemma}
  \label{lem:FluxBounds}
  For any degree $p \ge 0$ and any $\vec{q}_{h} \in P_p(\oh)^2$
  satisfying $\langle\hat{\mu}_1,\vec{q}_{h}\cdot\vec{n}\rangle_h = 0$
  for all $\hat{\mu}_1 \in \tr( P_1(\oh) \cap H_0^1(\om)),$ we have
  \begin{equation}
    \sum_{E\in\mcE_\interior}
    h_E
    \Big\|\jump{\vec{q}_{h}\cdot\bmn} \Big\|_{L^2(E)}^2
    \,\eqsim\,
    \sup_{\mu\in H^1_0(\Omega)}
    \frac{\langle \mu ,\bmq_{h}\cdot\bmn\rangle_h^2}
    {\quad \|\mu\|_{H^1(\Omega)}^2}.
  \label{eq:FluxEquivalence}
  \end{equation}
\end{lemma}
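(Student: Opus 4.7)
The plan is to establish the equivalence by proving the two directions separately, using \cref{lem:inf-H1-sup} for one direction and the Clément-type interpolant $\Igrad$ for the other.

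For the $\lesssim$ direction (an efficiency-type estimate), I will invoke \cref{lem:inf-H1-sup} directly. That lemma already bounds the left-hand side by a supremum over $\hat{\mu}\in\tilde{P}^0_{p+2}(\d\oh)$ of the ratio $\ip{\vec{q}_h\cdot\vec n,\hat{\mu}}_h^2/\|\Eg(\hat{\mu})\|_{H^1(\om)}^2$. Since every such $\hat{\mu}$ is the trace of $\Eg(\hat{\mu})\in H^1_0(\om)$, setting $\mu = \Eg(\hat{\mu})$ gives $\langle \mu,\vec{q}_h\cdot\vec n\rangle_h = \ip{\hat{\mu},\vec{q}_h\cdot\vec n}_h$ and $\|\mu\|_{H^1(\om)} = \|\Eg(\hat{\mu})\|_{H^1(\om)}$. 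The supremum on the right of~\eqref{eq:FluxEquivalence} is taken over the larger space $H^1_0(\om)$, and therefore dominates the one in \cref{lem:inf-H1-sup}. Note that the orthogonality hypothesis is not needed for this direction.

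For the $\gtrsim$ direction (a reliability-type estimate), given any $\mu \in H^1_0(\om)$ I will use the quasi-interpolant $\Igrad \mu \in P_1(\oh)\cap H^1_0(\om)$. Since $\tr(\Igrad \mu) \in \tr(P_1(\oh)\cap H^1_0(\om))$, the orthogonality assumption gives $\langle \Igrad \mu, \vec{q}_h \cdot \vec n\rangle_h = 0$, so
\begin{equation*}
  \langle \mu, \vec{q}_h \cdot \vec n\rangle_h
  = \langle \mu - \Igrad \mu, \vec{q}_h \cdot \vec n\rangle_h
  = \sum_{E\in\mcE_\interior} \int_E (\mu - \Igrad \mu)\, \jump{\vec{q}_h\cdot\vec n},
\end{equation*}
where the boundary edges drop out because both $\mu$ and $\Igrad \mu$ vanish on $\d\om$. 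Applying Cauchy--Schwarz edgewise with the weight $h_E^{\pm 1/2}$ and then a discrete Cauchy--Schwarz over edges yields
\begin{equation*}
  |\langle \mu, \vec{q}_h \cdot \vec n\rangle_h|
  \leq
  \Bigl(\sum_{E} h_E^{-1}\|\mu - \Igrad \mu\|_{L^2(E)}^2\Bigr)^{1/2}
  \Bigl(\sum_{E} h_E \|\jump{\vec{q}_h\cdot\vec n}\|_{L^2(E)}^2\Bigr)^{1/2}.
\end{equation*}
The first factor is controlled by $\|\mu\|_{H^1(\om)}$ by virtue of~\eqref{eq:Clement}. Squaring and taking the supremum over $\mu \in H^1_0(\om)$ completes the equivalence.

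The essential obstruction here is simply the need for the orthogonality hypothesis: without it one cannot subtract $\Igrad \mu$ and thereby exploit the approximation estimate~\eqref{eq:Clement}. Everything else is a routine combination of edge-bubble efficiency (from \cref{lem:inf-H1-sup}) with the commuting quasi-interpolator. No separate scaling arguments beyond those already cited are required.
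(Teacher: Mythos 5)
Your proof is correct and follows essentially the same route as the paper's: \cref{lem:inf-H1-sup} for the efficiency direction, and the orthogonality hypothesis combined with the quasi-interpolant $\Igrad$ and the estimate~\eqref{eq:Clement} for the reliability direction. The only (harmless) difference is that in the first direction you enlarge the supremum directly by observing that $\Eg(\hat{\mu})\in H^1_0(\om)$ for each $\hat{\mu}\in\tilde{P}^0_{p+2}(\d\oh)$, whereas the paper passes through the supremum over $H_0^{1/2}(\d\oh)$ with the minimum-energy-extension norm and cites the standard trace-norm identification.
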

\begin{proof}
  By \cref{lem:inf-H1-sup},
  \begin{align}
    \sum_{E\in\mcE_\interior}
    h_E
    \Big\|\jump{\vec{q}_{h}\cdot\bmn} \Big\|_{L^2(E)}^2
    & \lesssim
    \sup_{\hat{\mu}_h \in \tilde{P}^0_{p+2}(\d\oh)}
      \frac{\ip{ \vec{q}_h \cdot \vec n,
      \hat{\mu}_h}_h^2 }{\| \Eg(\hat{\mu}_h) \|_{H^1(\om)}^2}
    \lesssim
      \sup_{\hat\mu\in H_0^{\onehalf}(\d\oh)}
      \frac{\langle \hat{\mu}
      ,\vec{q}_{h}\cdot\bmn\rangle_h^2}{\|\hat\mu\|_{H^{\onehalf}(\d\oh)}^2}
    \\
    & =
      \sup_{\mu\in H_0^1(\om)}
      \frac{\langle \mu
      ,\vec{q}_{h}\cdot\bmn\rangle_h^2}{\|\mu\|_{H^1(\om)}^2}
      \,,
  \end{align}
  where the last identity follows from previous works (see
  \cite{demkowicz2011analysis} or \cite[Theorem~2.3]{Carstensen15}).

  Hence, it suffices to prove the reverse inequality.
  Since
  $\langle \Igrad \mu,\vec{q}_{h}\cdot\vec{n}\rangle_h = 0,$
  \begin{align}
    \langle \mu &,\vec{q}_{h}\cdot\bmn\rangle_h   
     =
      \langle
      \mu - \Igrad \mu ,\vec{q}_{h}\cdot\bmn\rangle_h
      =
      \sum_{E \in \Eint} 
    (\mu - \Igrad \mu, \jump{\vec{q}_{h}\cdot\bmn})_E
    \\
    & 
      \lesssim
      \sum_{E \in \Eint}
      \!
      h_E^{\minusonehalf}
      \| \mu - \Igrad \mu \|_{L^2(E)}
      \,
      h_E^{\onehalf}
      \Big\| \jump{\vec{q}_{h}\cdot\bmn} \Big\|_{L^2(E)}
      \lesssim
      \|\mu\|_{H^1(\om)}
      \bigg(\sum_{E \in \Eint}
      \!
      h_E
      \Big\| \jump{\vec{q}_{h}\cdot\bmn} \Big\|_{L^2(E)}^2\bigg)^{\onehalf}.
  \end{align}
  Here, in the final line, we have used the Cauchy--Schwarz inequality and \eqref{eq:Clement}.
  This completes the proof of~\eqref{eq:FluxEquivalence}.
\end{proof}

\begin{lemma}
  \label{lem:TraceBounds}
  For any degree $p \ge 1$ and any $w_h \in P_p(\oh)$ satisfying
  $\int_E \jump{w_h \bmn } = 0$ on all edges  $E\in\mcE,$
  \begin{equation}
    \sum_{E\in\mcE}
    h_E
    \Big\|\jump{w_h \bmn }\Big\|_{H^1(E)}^2
    \eqsim
    \sum_{E\in\mcE}
    h_E^{-1}
    \Big\|\jump{w_h\bmn} \Big\|_{L^2(E)}^2
    \eqsim
    \sup_{\bsigma\in \bmH(\div,\Omega)}
    \frac{\langle \bsigma \cdot \bmn,w_h\rangle_h^2}
    {\quad \|\bsigma\|_{\bmH(\div,\Omega)}^2}
    \,.
  \label{eq:TraceEquivalence}
  \end{equation}
\end{lemma}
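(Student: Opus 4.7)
The lemma consists of two equivalences, which I would treat separately. The first equivalence, $\sum_{E\in\mcE} h_E \|\jump{w_h \bmn}\|_{H^1(E)}^2 \eqsim \sum_{E\in\mcE} h_E^{-1} \|\jump{w_h \bmn}\|_{L^2(E)}^2$, is a purely local polynomial statement. Since $\jump{w_h \bmn}|_E$ is a polynomial of degree at most $p$ with zero mean by hypothesis, standard scaling arguments give both the Poincar\'e inequality $\|\phi\|_{L^2(E)}^2 \lesssim h_E^2|\phi|_{H^1(E)}^2$ and the polynomial inverse estimate $|\phi|_{H^1(E)}^2 \lesssim h_E^{-2}\|\phi\|_{L^2(E)}^2$; together they imply $\|\phi\|_{H^1(E)}^2 \eqsim h_E^{-2}\|\phi\|_{L^2(E)}^2$, and weighting by $h_E$ followed by summation yields the claim.

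For the direction $\sum h_E^{-1}\|\jump{w_h\bmn}\|_{L^2(E)}^2 \lesssim \sup_{\bsigma}\langle\bsigma\cdot\bmn,w_h\rangle_h^2/\|\bsigma\|_{\bmH(\div,\om)}^2$ of the second equivalence, I would invoke \cref{lem:inf-Hdiv-sup}. Any $\hat{\sigma}\cdot\bmn \in \hat{P}_p(\d\oh)$ is the normal trace of the minimum-energy lift $\Ed(\hat{\sigma}\cdot\bmn) \in \bmH(\div,\om)$; substituting this $\bsigma$ into the supremum in \cref{eq:TraceEquivalence} reproduces the supremum in \cref{lem:inf-Hdiv-sup}, so the latter is dominated by the former. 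Combined with the first equivalence, this handles the contributions from interior edges. For boundary edges $E\in\mcE\setminus\Eint$, I would supplement with a bubble-based test field $\bsigma$ supported in the single element adjacent to $E$, constructed analogously to the proof of \cref{lem:inf-Hdiv-sup}, in order to recover the missing $\sum_{E\in\mcE\setminus\Eint} h_E^{-1}\|\jump{w_h\bmn}\|_{L^2(E)}^2$ terms.

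For the reverse direction, I would take arbitrary $\bsigma \in \bmH(\div,\om)$ and apply the regular decomposition \cref{eq:RegularDecomposition} to write $\bsigma = \curl\varphi + \vec\psi$, with $\|\varphi\|_{H^1(\om)} + \|\vec\psi\|_{H^1(\om)} \lesssim \|\bsigma\|_{\bmH(\div,\om)}$. The commuting property $\curl\Igrad\varphi = \Idiv\curl\varphi$ places both $\Idiv\vec\psi$ and $\Idiv\curl\varphi$ in $\mcR\mcT_0(\oh)\cap\bmH(\div,\om)$, so their normal traces are constant on each edge; the zero-mean hypothesis then makes their contributions to $\langle\bsigma\cdot\bmn,w_h\rangle_h$ vanish, leaving
\begin{equation*}
\langle\bsigma\cdot\bmn,w_h\rangle_h
= \langle\curl(\varphi-\Igrad\varphi)\cdot\bmn, w_h\rangle_h
+ \langle(\vec\psi-\Idiv\vec\psi)\cdot\bmn, w_h\rangle_h.
\end{equation*}
Elementwise integration by parts, via the identity $\int_K\curl\chi\cdot\grad w_h = \int_{\d K}\chi\,\partial_\tau w_h$ for $\chi\in H^1(K)$, reassembles the curl term into the edge sum $\sum_E\int_E(\varphi-\Igrad\varphi)\,\partial_\tau\tilde{w}_E$, where $\tilde{w}_E$ denotes the scalar jump on $E$. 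Cauchy-Schwarz applied with the weights $h_E^{\mp 1/2}$, combined with \cref{eq:HdivInterpolant} and the first equivalence, bounds this by $\|\bsigma\|_{\bmH(\div,\om)}\bigl(\sum_E h_E^{-1}\|\jump{w_h\bmn}\|_{L^2(E)}^2\bigr)^{1/2}$; the $\vec\psi$ term is treated analogously using Cauchy-Schwarz and \cref{eq:HdivInterpolant} directly.

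The principal technical obstacle will be the curl term, because $\curl\varphi\cdot\bmn|_E$ lies only in $H^{-1/2}(E)$ when $\varphi\in H^1(\om)$; the edge integrals $\int_E(\varphi-\Igrad\varphi)\,\partial_\tau\tilde{w}_E$ must therefore be interpreted as duality pairings assembled from the elementwise integration-by-parts identities rather than as genuine Lebesgue integrals. A secondary point is extending \cref{lem:inf-Hdiv-sup} to boundary edges for the $\sum\lesssim\sup$ direction, which requires a boundary analogue of the edge-bubble construction from that lemma's proof.
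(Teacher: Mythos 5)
Your proposal is correct and follows essentially the same route as the paper: Poincar\'e plus a polynomial inverse estimate for the first equivalence, \cref{lem:inf-Hdiv-sup} with the minimum-energy extension $\Ed$ for the lower bound, and the regular decomposition \cref{eq:RegularDecomposition} with the commuting quasi-interpolators, the zero-mean cancellation of the $\mcR\mcT_0$ contributions, and elementwise integration by parts for the upper bound. Your explicit treatment of boundary edges via a single-element bubble construction addresses a point the paper's own proof passes over silently (its chain of inequalities is written only for $E\in\Eint$), and your remark that the curl term's edge pairings must be assembled from the elementwise identities is precisely how the paper's divergence-then-Stokes manipulation resolves the $H^{-1/2}$ issue.
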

\begin{proof}
  The first equivalence in~\eqref{eq:TraceEquivalence} immediately
  follows from the Poincar\'e inequality since the mean value of
  $\jump{w_h \bmn}$ vanishes. To prove the remaining equivalence,
  first observe that \cref{lem:inf-Hdiv-sup} implies
  \begin{align}
    \sum_{E \in \Eint} h_E \Big| \jump{ w_h \vec n } \Big|_{H^1(E)}^2
    & \lesssim
    \sup_{\hat{\sigma}\cdot \vec n \in \hat{P}_p(\d\oh)}
    \frac{\ip{ \hat{\sigma}\cdot \vec n, w_h }_h^2 }
      {\quad \| \Ed(\hat{\sigma}\cdot \vec n) \|_{H(\div, \om)}^2}
    \\
    &
      \le
    \sup_{\hat{\sigma}\cdot \vec n \in H^{\minusonehalf}(\d\oh)}
    \frac{\ip{ \hat{\sigma}\cdot \vec n, w_h }_h^2 }
    {\quad \| \hat{\sigma}\cdot \vec n
      \|_{H^{\minusonehalf}(\d\oh)}^2}
      =
      \sup_{\sigma \in H(\div,\om)}
      \frac{\ip{ {\sigma}\cdot \vec n, w_h }_h^2 }
      {\quad \| {\sigma}      \|_{H(\div,\om)}^2 }
      ,
  \end{align}
  where the last identity is well known (see~\cite[Theorem~2.3]{Carstensen15}).
  This proves one side of the stated equivalence.

  To prove the remaining inequality, we start by decomposing any given
  $\vsig \in H(\div, \om)$ using above-mentioned regular
  decomposition: $\vsig = \curl \varphi_\vsig + \vpsi_\vsig$.  Then,
  since the jump of $w_h$ has zero mean value on every edge,
  we observe that
  $\ip{ \bmn \cdot \Idiv \vpsi_\vsig, w_h}_h = \ip{ \bmn\cdot
    \Idiv(\curl \varphi_\vsig), w_h }_h= 0.$ Therefore, by the
  commutativity property of the quasi-interpolators,
  \[
    \ip{ \vsig \cdot \bmn, w_h}_h
    =
    \ip{ \bmn \cdot ( \curl \varphi_\vsig - \Igrad \varphi_\vsig),
      w_h }_h
    +
    \ip{\bmn \cdot (\vpsi_\vsig - \Idiv \vpsi_\vsig), w_h}_h.
  \]
  We proceed labeling the terms on the right as $t_1$ and $t_2$,
  respectively.  Using the divergence theorem and the Stokes theorem
  in succession,
  \begin{align}
    t_1
    & = (\curl( \varphi_\vsig - \Igrad \varphi_\vsig), \grad w_h)_\som
    \\
    & = \ip{  \varphi_\vsig - \Igrad \varphi_\vsig, \vec{n}^\perp\cdot  \grad w_h}_h
      =
      \sum_{E \in \mcE} (\varphi_\vsig - \Igrad \varphi_\vsig,
      \jump{\vec{n}^\perp\cdot  \grad w_h})_E
    \\
    &
      \lesssim
      \sum_{E \in \mcE}  h_E^{-\onehalf}
      \| \varphi_\vsig - \Igrad \varphi_\vsig \|_{L^2(E)}
      \; h_E^{\onehalf}
      \Big| \jump{ w_h \vec{n}} \Big|_{H^1(E)}
      \lesssim
      \|\vsig\|_{H(\div,\om)}
      \bigg( \sum_{E \in \mcE}  h_E
      \; \Big| \jump{ w_h \vec{n}} \Big|_{H^1(E)}^2
      \bigg)^{\onehalf}.
  \end{align}
  Here, in the final line, we have also used the Cauchy--Schwarz inequality and \eqref{eq:HdivInterpolant}.
  The term $t_2$ can be estimated
  similarly:
  \begin{align}
    t_2
    & = 
      \sum_{E \in \mcE} ( \vpsi_\vsig - \Idiv \vpsi_\vsig, \jump{w_h \bmn} )_E
      \le
      \sum_{E \in \mcE} h_E^{-\onehalf}
      \| \vpsi_\vsig - \Idiv \vpsi_\vsig \|_{L^2(E)}
      h_E^{\onehalf} \Big\| \jump{w_h \bmn} \Big\|_{L^2(E)}
    \\
    & \lesssim
      \| \vsig \|_{H(\div, \om)}
      \bigg(
      \sum_{E \in \mcE}
      h_E \Big\| \jump{w_h \bmn} \Big\|_{L^2(E)}^2
      \bigg)^{\onehalf}.
  \end{align}
  Thus, the proof of the remaining inequality is complete:
  \reqnomode
  \begin{equation}
    \frac{\ip{ \vsig\cdot \bmn, w_h}_h^2 }{ \quad \| \vsig \|_{H(\div, \om)}^2}
    =
    \frac{(t_1 + t_2)^2}{ \quad \| \vsig \|_{H(\div, \om)}^2}
    \lesssim
    \sum_{E \in \mcE}
      h_E \Big\| \jump{w_h \bmn} \Big\|_{L^2(E)}^2.
  \tag*{\qedhere}
  \end{equation}
  \leqnomode
\end{proof}

\begin{proof}[Proof of \cref{thm:PossionDualDPGAPosterioriErrrorEstimator}]
  The DPG* solution $(\vec{p}_h, v_h)$ satisfies the equations
  of~\eqref{eq:dpgstar-poisson} for all $(\vec{\tau}, \nu) \in \scV_h$
  and all $(\vsig, \mu, \hat{\sigma}_n. \hat{\mu}) \in \scU_h$. In
  particular, \eqref{eq:dpgstar-poisson-b} implies that
  $\ip{ \vec{p}_h\cdot \bmn, \hat{\mu}}_h = \ip{ v_h, \hat{\sigma}_n}_h
  =0$. Hence the conditions of \cref{lem:FluxBounds,lem:TraceBounds} are satisfied. The conclusions of these
  lemmas show that the $\eta$ in
  \cref{thm:DualDPGAPosterioriErrrorEstimator-0} satisfies
  \[
    \eta( (\vec{p}_h, v_h) ) \eqsim
    \sum_{E\in\mcE_\interior}
    h_E
    \big\|\jump{\vec{p}_{h} \cdot \vec n}\big\|_{L^2(E)}^2
    +
    \sum_{E\in\mcE}
    h_E
    \big\|\jump{v_h \vec n}\big\|_{H^1(E)}^2
  \]
  and, moreover, the last term may be replaced by
  $ h_E^{-1} \big\|\jump{v_h \vec n}\big\|_{L^2(E)}^2$, if we
  please. Hence an application of
  \cref{thm:DualDPGAPosterioriErrrorEstimator-0} completes the
  proof.
\end{proof}


\section{Numerical experiments} 
\label{sec:numerical_experiments}

In order to verify the mathematical theory developed above, we conducted several standard numerical verification experiments using two finite element software packages which have been used extensively for implementing DPG methods.
In our first set of experiments, we used Camellia \cite{roberts2014camellia,CamelliaManual}, a user-friendly C++ toolbox developed by Nathan V. Roberts which itself relies on Sandia’s Trilinos library of packages \cite{1089021}.
Specifically, Camellia was used for the \textit{a priori} convergence rate verification on the model square domain $\Omega_\square = [0,1]^2$ reported on in \cref{sub:square_domain_dirichlet}.
In our second set of experiments, we used $hp$2D, a sophisticated suite of Fortan routines with support for 2D local hierarchical and anisotropic $h$- and $p$-refinements on hybrid meshes \cite{hpbook} and corresponding oriented embedded shape functions for both quadrilateral and triangular elements in each of the canonical 2D de Rham sequence energy spaces \cite{Fuentes2015}:
\begin{equation}
  H^1(K) \xrightarrow{\,\,\grad\,\,} \bmH(\rot,K) \xrightarrow{\,\,\rot\,\,} L^2(K)
  \qquad
  \text{and}
  \qquad
  H^1(K) \xrightarrow{\,\,\curl\,\,} \bmH(\div,K) \xrightarrow{\,\,\div\,\,} L^2(K)
  \,.
\label{eq:ExactSequence}
\end{equation}
$hp$2D was used to implement a simple $hp$-adaptive algorithm for a singular solution to Poisson's equation on the canonical L-shaped domain, $\Omega_{\Lshaped{}} = (-1,1)^2 \setminus [0,1]\times[-1,0]$; see \cref{sub:l_shaped_domain}.
This experiment parallels a similar study with the analogous DPG method in \cite{demkowicz2011analysis}.

\subsection{Set-up} 
\label{sub:set_up}
Let $\Omega\in\{\Omega_\square,\Omega_{\Lshaped{}}\}$ and let $\Gamma_\Dirichlet,\Gamma_\Neumann$ be disjoint and relatively open subsets comprising $\bdry \Omega$; $\Gamma_\Dirichlet\cap\Gamma_\Neumann = \emptyset$, $\overline{\Gamma_\Dirichlet\cup\Gamma_\Neumann} = \bdry \Omega$.
All of our experiments investigate some form of Poisson's equation:
\begin{equation}
  \left\{
    \begin{aligned}
      -\Delta v &= f &&\text{in } \Omega\,,\\
      v &= v_0 &&\text{on } \Gamma_\Dirichlet\,,\\
      \frac{\partial v}{\partial n} &= p_n &&\text{on } \Gamma_\Neumann\,,
    \end{aligned}
  \right.
\label{eq:ModelProblem}
\end{equation}
where the load $f\in L^2(\Omega)$ and the boundary data $v_0$ and $p_n$ are appropriately smooth.

As before, let $\mesh$ denote the mesh subordinate to $\Omega$ and let $\mcE$ denote the corresponding collection of edges.
In each of our experiments, we only considered piecewise-affine two-dimensional domains $\Omega\in\{\Omega_\square,\Omega_{\Lshaped{}}\}$ subdivided into quadtree meshes consisting of either fully geometrically conforming or 1-irregular quadrilateral elements $K\in\mesh$.\footnote{Although many of the preceding results are proven only for triangular meshes, the numerical experiments documented in this section verify alternative results in the setting of quadrilateral elements, which we understand to be similar.}
During stiffness matrix assembly, the degrees of freedom of every element edge with a hanging node was constrained by its common edge.
Alternatively, because of the ultraweak variational formulation we considered, we could have incorporated each edge independently \cite{roberts2014camellia,vaziri2017high}.

For each quadrilateral element $K\in\mesh$, we associated a unique (anisotropic) polynomial order $p_K,q_K\ge 1$, respectively.
Each associated polynomial order can be naturally related to a $(p_K,q_K)$-order conforming finite element de Rham sequence.
For instance, begin with the standard N\'ed\'elec spaces of the first type,
\begin{equation}
  \mcQ^{p_K,q_K}(K) \xrightarrow{\,\,\curl\,\,} \mcQ^{p_K,q_K-1}\times\mcQ^{p_K-1,q_K}(K) \xrightarrow{\,\,\div\,\,} \mcQ^{p_K-1,q_K-1}(K)
  \,,
\end{equation}
where $\mcQ^{p_K,q_K}(K)$ is the space of bivariate polynomials over $K$ with degree at most $p_K$ horizontally and $q_K$ vertically.
Now, consider the mesh-dependent sequence ${W_{hp} \xrightarrow{\,\,\curl\,\,} \bmV_{\!hp} \xrightarrow{\,\,\div\,\,} Y_{hp}}$, where
  \begin{align}
    W_{hp}
    &=
    \{w\in H^1_0(\Omega) : w|_K \in\mcQ^{p_K,q_K}(K)~\forall K\in\mesh \}
    \,,
    \\
    \bmV_{\!hp}
    &=
    \{\bmq\in \bmH(\div,\Omega) : \bmq|_K \in\mcQ^{p_K,q_K-1}(K)\times\mcQ^{p_K-1,q_K}(K)~\forall K\in\mesh \}
    \,,
    \\
    Y_{hp}
    &=
    \{w\in L^2(\Omega) : w|_K \in\mcQ^{p_K-1,q_K-1}(K)~\forall K\in\mesh \}
    \,.
  \end{align}
We define the (isotropic) uniform-$p$ trial space to be $\scU_{h} = Y_{hp} \times Y_{hp}^2 \times \tr(W_{hp}) \times \tr_n(\bmV_{\!hp})$, where $p_K=q_K=p$ is fixed for all $K\in\mesh$.
Similarly, the corresponding (anisotropic) $hp$ trial space is defined $\scU_{h} = Y_{hp} \times Y_{hp}^2 \times \tr(W_{hp}) \times \tr_n(\bmV_{\!hp})$, where $p_K$ and $q_K$ are allowed to vary freely throughout the mesh.
With the latter definition, notice that the polynomial order of an $hp$ interface function, when restricted to a single shared edge $E\in\mcE$, $E= \bigcap_{\overbar{K}\cap E\neq\emptyset}\overbar{K}$, will naturally be restricted by the lowest polynomial order of all elements $\overbar{K}\cap E\neq\emptyset$ sharing the edge.\footnote{Such a mesh obeys the so-called \emph{minimum rule}.}

For the test functions, define the spaces
\begin{align}
  \tilde{W}_{hp,\dd\!p}
  &=
  \{v\in H^1(\mesh) : v|_K \in\mcQ^{p_K+\dd\!p,q_K+\dd\!p}(K)~\forall K\in\mesh \}
  \,,
  \\
  \tilde{\bmV}_{hp,\dd\!p}
  &=
  \{\bmq\in \bmH(\div,\mesh) : \bmq|_K \in\mcQ^{p_K+\dd\!p,q_K+\dd\!p-1}(K)\times\mcQ^{p_K+\dd\!p-1,q_K+\dd\!p}(K)~\forall K\in\mesh \}
  \,.
\end{align}
In all of our numerical experiments, we used $\scV_h = \tilde{W}_{hp,\dd\!p} \times \tilde{\bmV}_{hp,\dd\!p}$ where $\dd\!p \in\{0,1,2\}$.


\subsection{Adaptive mesh refinement} 
\label{sub:adaptive_mesh_refinement}

In our experiments with $h$- and $hp$-adaptive mesh refinement, we used a standard isotropic $h$-subdivision rule.
Namely, at each refinement step, each element marked for $h$-refinement was uniformly subdivided into four equal-order quadrilateral elements.
Afterward, a standard so-called ``mesh closure'' algorithm was called to induce a small number of additional isotropic $h$-subdivisions of neighboring elements in order to ensure 1-irregularity of the mesh.
Alternatively, at each refinement step, the polynomial order of any $p$-refinement marked element was isotropically incremented by one, $(p_K,q_K)\mapsto(p_K+1,q_K+1)$, and then the order of all elements neighboring a $p$-refinement marked element was also isotropically incremented by one.

Recall that $\mcE_{K} = \{ E\in \mcE : \mathrm{meas}(\bdry K \cap E)\neq\emptyset \}$ and define $\mcE_{K,\interior} = \mcE_{K} \cap \mcE_\interior$.
Recalling the global error estimator $\eta_1(\vec{v}_h)$ appearing in \cref{thm:PossionDualDPGAPosterioriErrrorEstimator}, define a \emph{refinement indicator} $\eta_K\in \R_{\ge 0}$ for each $K\in\Omega_h$, \textit{viz.},
\begin{equation}
  \eta_K
  =
  \bigg(
  \|\mcL\vec{v}_h-f\|_{L^2(K)}^2 + \sum_{E\in\mcE_{K,\interior}} h_E
      \big\|\jump{\vec{p}_{h} \cdot \vec n}\big\|_{L^2(E)}^2 + \sum_{E\in\mcE_{K}} h_E \big\|\jump{v_h \vec n}\big\|_{H^1(E)}^2
  \bigg)^{\onehalf}
  \,.
\end{equation}
In element marking, we followed the so-called ``greedy'' algorithm.
That is, at each refinement step, all elements $K\in\mesh$ whose refinement indicator $\eta_K$ was above $50\%$ of the maximum over all elements in the mesh, $\eta_{\mathrm{max}} = \max_{K\in\mesh} \eta_K$, was marked for refinement.
In the case of what we call $h$-adaptive mesh refinement, every marked element was $h$-refined, as described above, i.e. no elements were $p$-refined.
Alternatively, in the case of $hp$-adaptive mesh refinement, a common flagging strategy \cite{ainsworth1997aspects} was used to decide whether to $h$ or $p$ refine; see \cref{sub:l_shaped_domain}.


\subsection{Pure Dirichlet boundary conditions on a square domain} 
\label{sub:square_domain_dirichlet}

Recall \cref{eq:ModelProblem}.
In this first example, $\Omega = \Omega_\square$ and $\Gamma_\Dirichlet = \bdry\Omega$.
We considered two seemingly benign cases for the loads: (i) $f=2\pi^2\sin(\pi x)\sin(\pi y)$ and $v_0 = 0$; and (ii) $f=0$ and  $v_0 = 1$.
In both cases, the exact solution is infinitely smooth.
Indeed, in case (i), $v=\sin(\pi x)\sin(\pi y)$ and, in case (ii), $v=1$.

Recall from \cref{thm:apriori} that the best approximation error of a DPG* method involves the Lagrange multiplier $\blambda = (\bzeta, \lambda, \hat{\zeta}_n, \hat\lambda)$ as well as the DPG* solution variable $\vec{v} = (\bmp,v)$.
Assume that $v$ is smooth.
With the norm $\|(\btau,\nu)\|_\scV^2 = \|\btau\|_{\bmH(\div,\mesh)}^2 + \|\nu\|_{H^1(\mesh)}^2$, $\lambda$ solves
\begin{equation}
  \left\{
    \begin{aligned}
      -\Delta \lambda &= g &&\text{in } \Omega\,,\\
      \lambda &= 0 &&\text{on } \bdry\Omega
      \,,
    \end{aligned}
  \right.
\label{eq:AuxiliaryProblem}
\end{equation}
where $g = v - 2\Delta v + \Delta^2 v$.
Indeed, recall \cref{eq:LMSolutions} and observe that $- \Delta \lambda = -\Delta f - \Delta e = \Delta (\Delta v)  + (v + 2f) =  v - 2\Delta v + \Delta^2 v$. Here, we have also used that $-\Delta v = f$ and $-\Delta e = v + 2f$.
In case (i), $g = (1+4\pi^2-4\pi^4)\sin(\pi x)\sin(\pi y)$, meanwhile, in case (ii), $g = 1$.
Notice that $g\in C^\infty(\overline{\Omega})$ in both cases.

In the first case, $\lambda$ can easily be shown to be a constant scalar multiple of $\sin(\pi x)\sin(\pi y)$ and so $\lambda \in C^\infty(\overline{\Omega})$ is infinitely smooth.
Therefore, by \cref{cor:DPG*PoissonConvergenceRates}, the convergence rate of the DPG* method under uniform $h$-refinement will be limited only by the underlying de Rham sequence polynomial order $p$.
Indeed, \cref{fig:hUnifSinA} demonstrates the convergence of the corresponding discrete solution $\bmv_h = (\bmp_h,v_h)$ to the exact solution, $\bmv = (\grad v,v)$, measured in the full test norm above, starting with an single-element mesh with (isotropic) polynomial order $p_K=q_K = p\in\{1,2,3,4\}$.
\cref{fig:hUnifSinB} presents the convergence of only the solution variable $v_h$, measured in the $L^2(\Omega)$-norm.
Although both figures correspond only to a test space enrichment of $\dd\!p=1$, similar results were observed for each choice $\dd\!p\in\{0,1,2\}$.

\begin{figure}[ht!]
  \centering
  \begin{subfigure}[b]{0.5\textwidth}
    \centering
    \includegraphics[width=7cm]{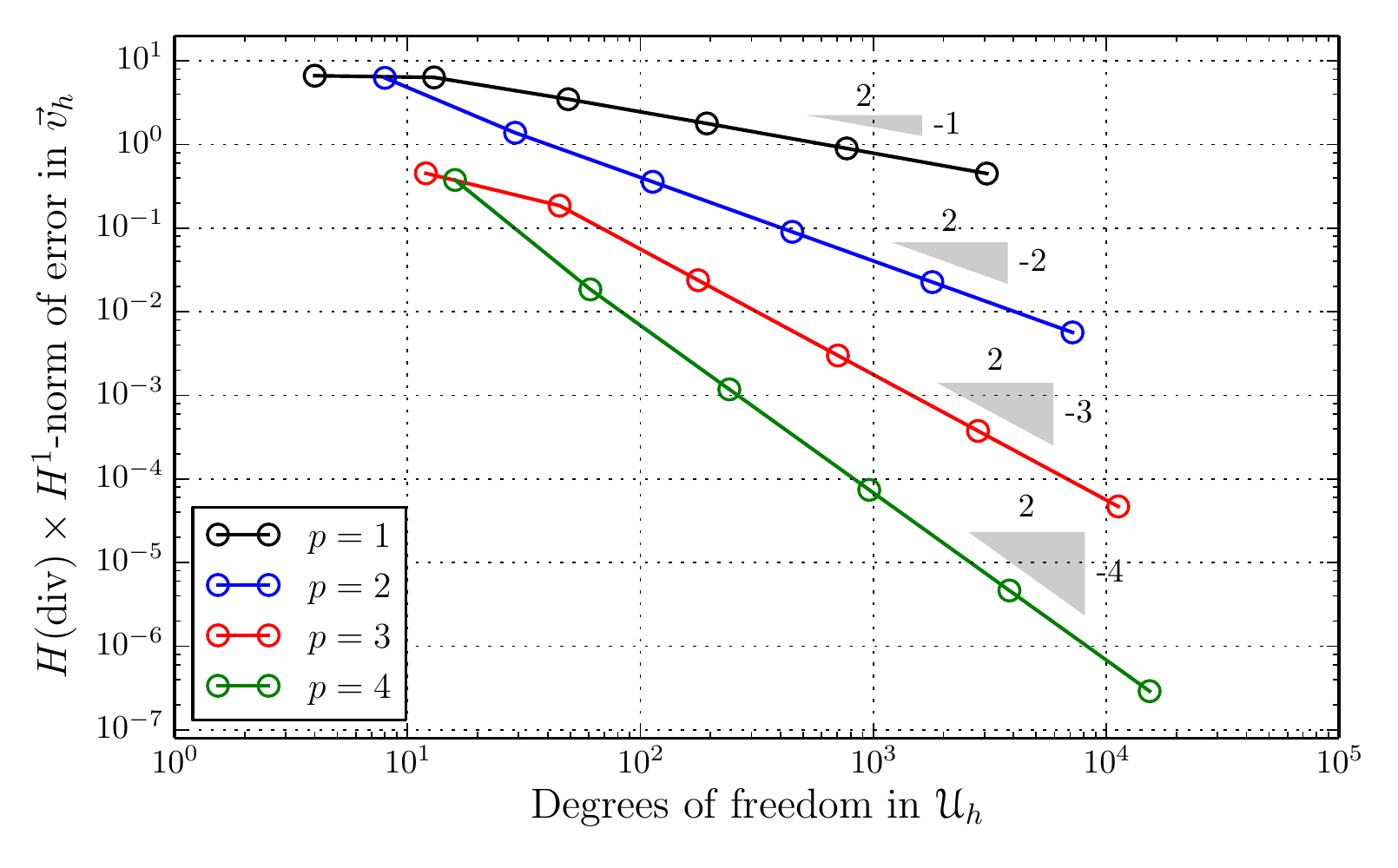}
    \caption{Optimal rates.}
  \label{fig:hUnifSinA}
  \end{subfigure}%
  ~ 
  \begin{subfigure}[b]{0.5\textwidth}
    \centering
    \includegraphics[width=7cm]{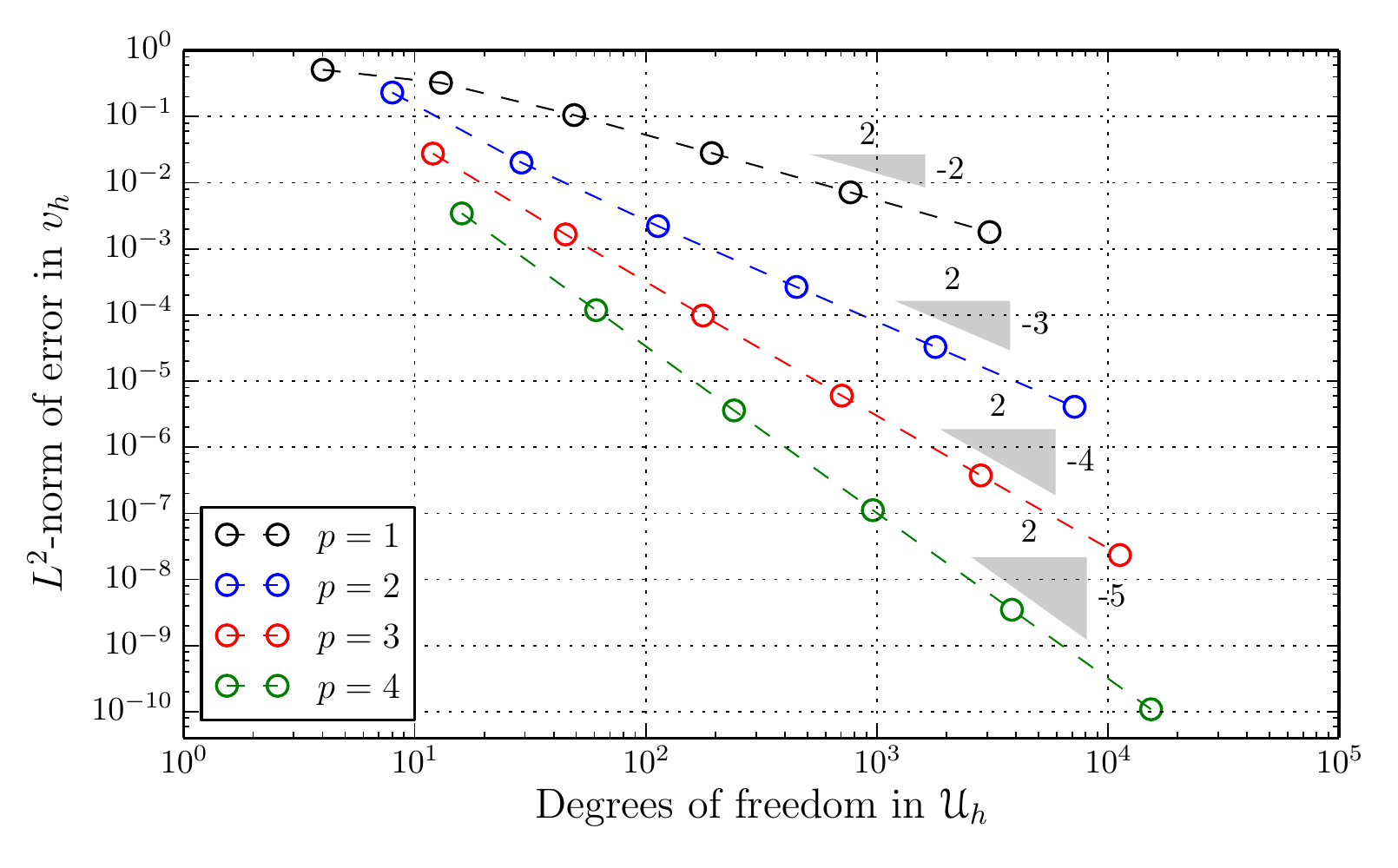}
    \caption{Optimal rates in the $L^2$-norm.}
  \label{fig:hUnifSinB}
  \end{subfigure}
  \caption{Convergence under $h$-uniform mesh refinements with the manufactured solution $v(x,y) = \sin(\pi x)\sin(\pi y)$. (Here, $\dd\!p=1$.)}
  \label{fig:hUnifSin}
\end{figure}

In the second case, because of the irregularity of the boundary, the Lagrange multiplier $\lambda \notin C^\infty(\overline{\Omega})$ is \emph{not} infinitely smooth.\footnote{Standard elliptic regularity theory can be used to show that $\lambda \in C^\infty(\Omega)$ is, however, infinitely smooth in the interior of the domain \cite{MR2597943}.}
Therefore, with this problem, the DPG* method experiences rate-limited convergence under uniform $h$-refinements.
This is evidenced by \Cref{fig:hUnifOne}.
However, as demonstrated by \Cref{fig:hAdaptOne}, using the greedy $h$-refinement strategy from \cref{sub:adaptive_mesh_refinement}, optimal convergence rates can still be recovered through adaptive mesh refinement.
See \cref{fig:SquareAdaptiveMesh} for a visual depiction of the solution of the corresponding auxiliary problem~\cref{eq:AuxiliaryProblem} as well as the corresponding adaptively refined mesh.

\begin{figure}
  \centering
  \begin{subfigure}[b]{0.5\textwidth}
    \centering
    \includegraphics[width=7cm]{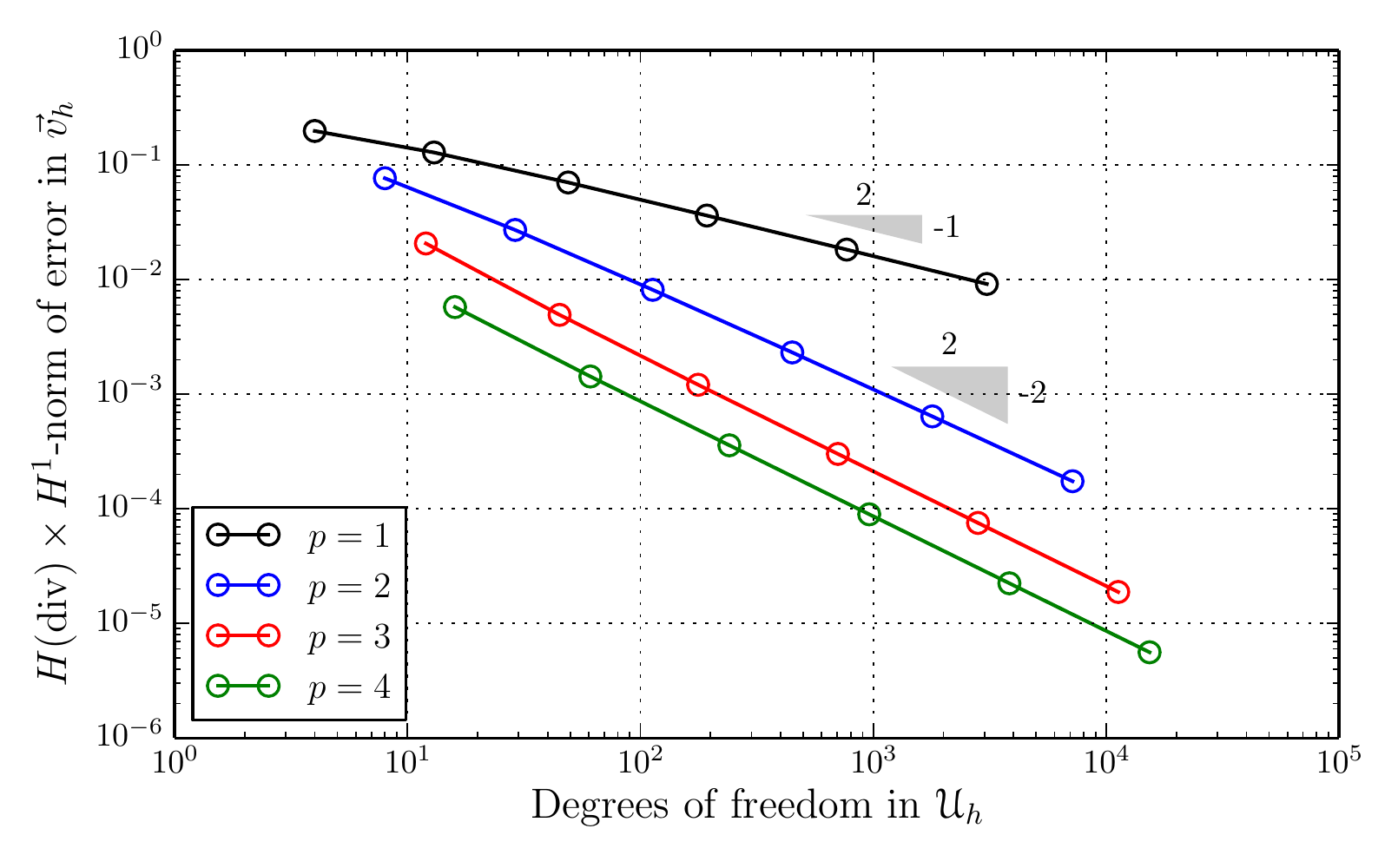}
    \caption{Limited optimal rates.}
    \label{fig:hUnifOneA}
  \end{subfigure}%
  ~ 
  \begin{subfigure}[b]{0.5\textwidth}
    \centering
    \includegraphics[width=7cm]{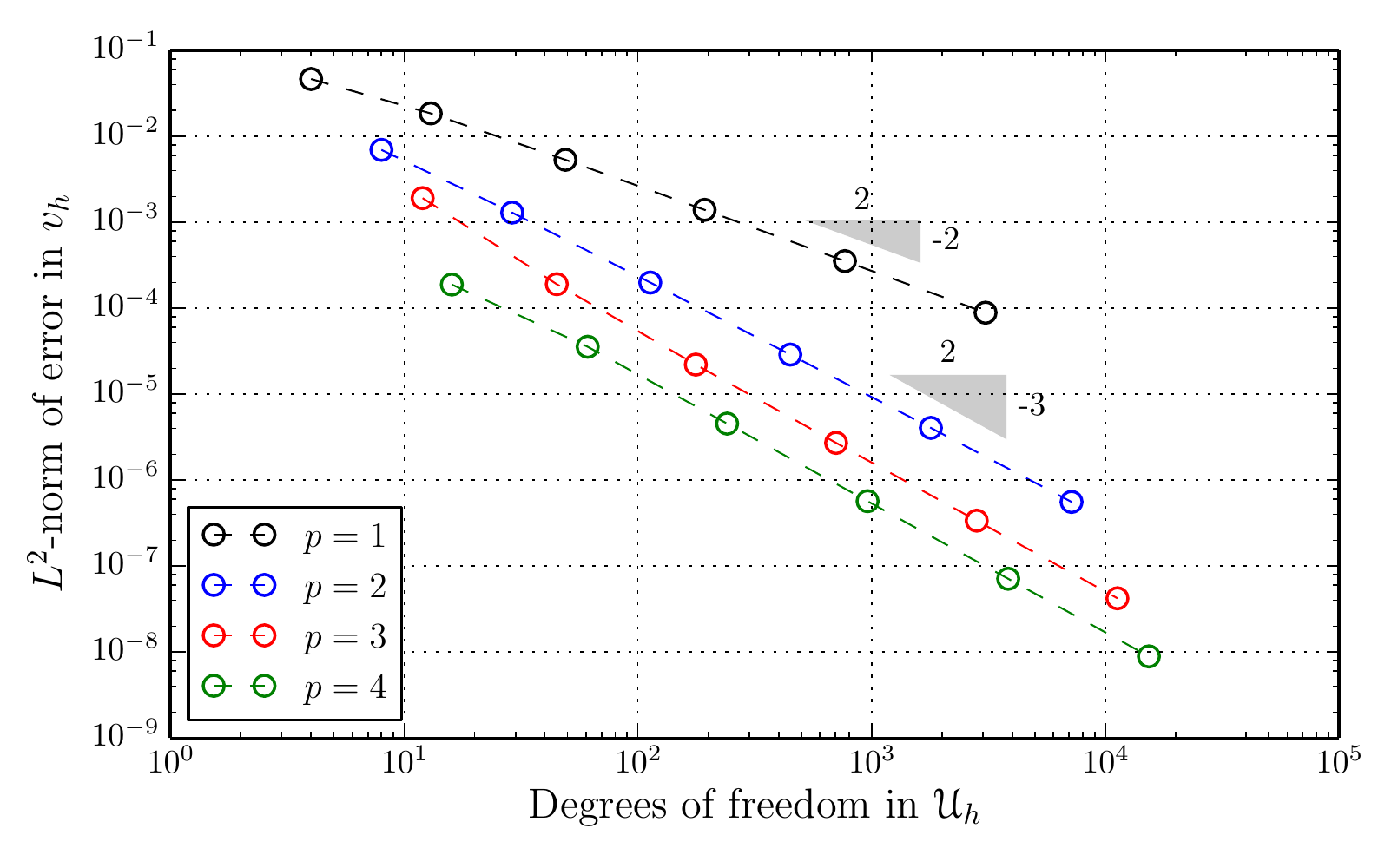}
    \caption{Limited optimal rates in the $L^2$-norm.}
    \label{fig:hUnifOneB}
  \end{subfigure}
  \caption{Convergence under $h$-uniform mesh refinements with the manufactured solution $v(x,y) = 1$. (Here, $\dd\!p=1$.)}
  \label{fig:hUnifOne}
\end{figure}

\begin{figure}
  \centering
  \begin{subfigure}[b]{0.5\textwidth}
    \centering
    \includegraphics[width=7cm]{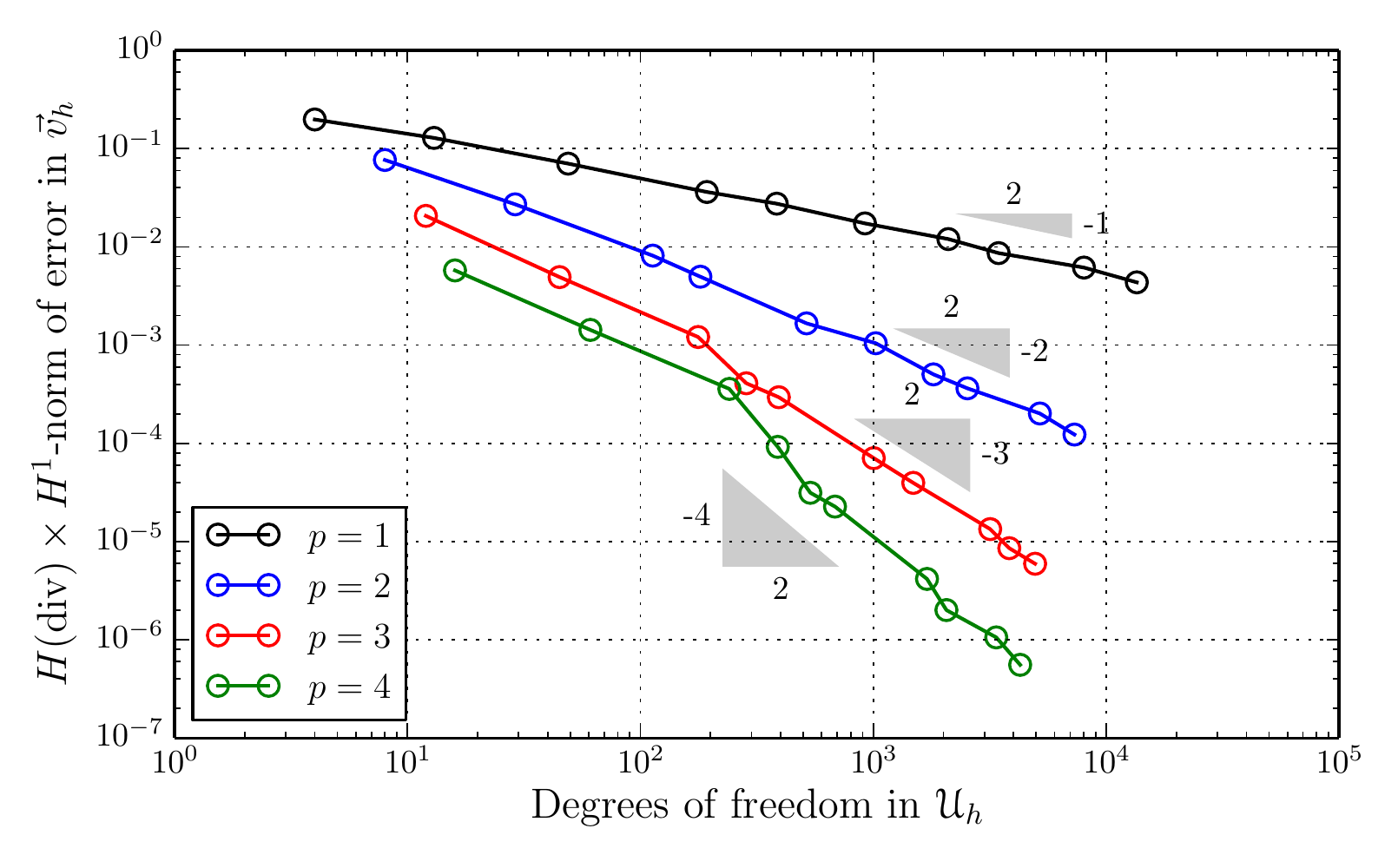}
    \caption{Recovered optimal rates.}
  \end{subfigure}%
  ~ 
  \begin{subfigure}[b]{0.5\textwidth}
    \centering
    \includegraphics[width=7cm]{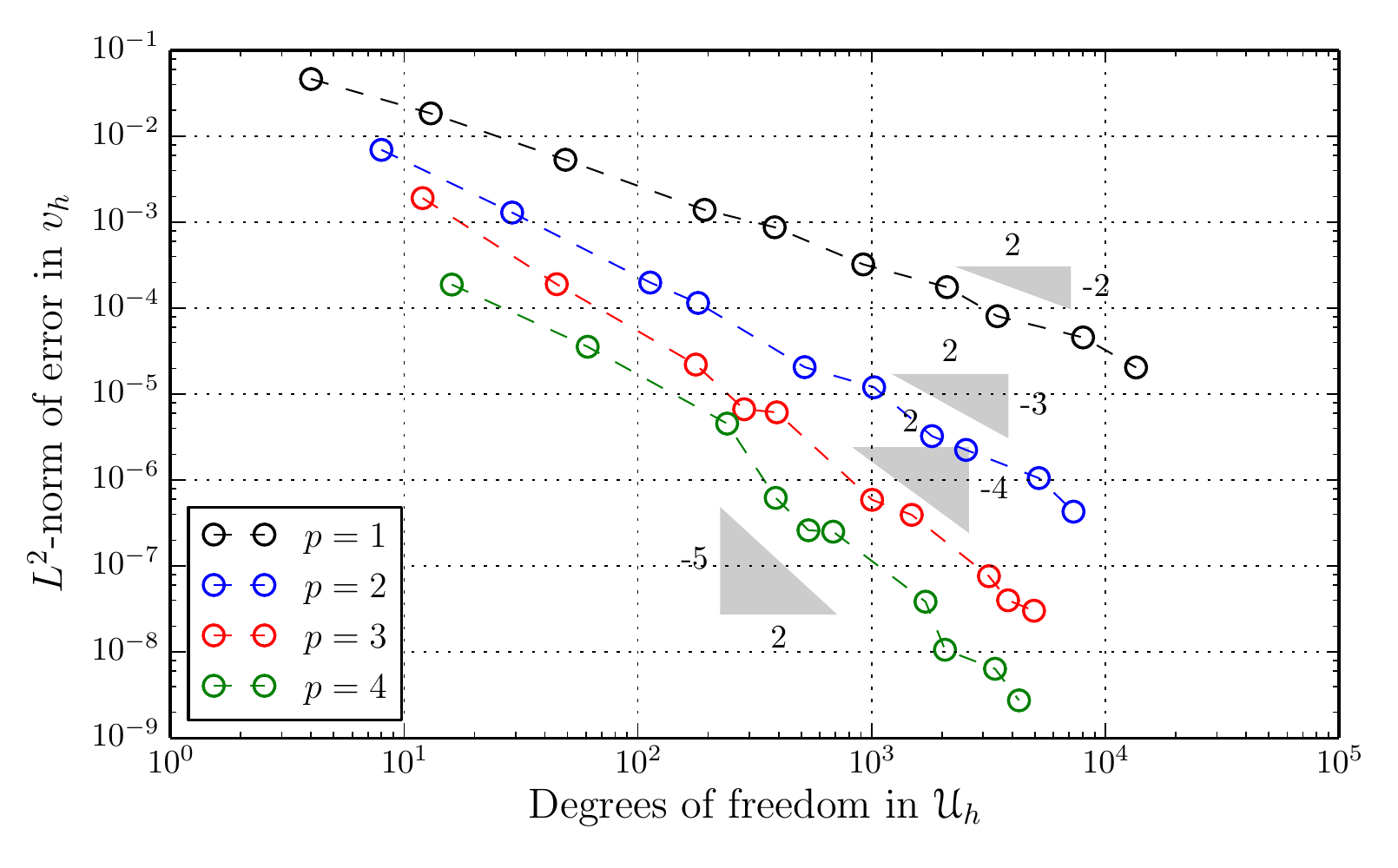}
    \caption{Recovered optimal rates in the $L^2$-norm.}
  \end{subfigure}
  \caption{Convergence under $h$-adaptive mesh refinements with the manufactured solution $v(x,y) = 1$. (Here, $\dd\!p=1$.)}
  \label{fig:hAdaptOne}
\end{figure}

\begin{figure}
  \centering
  \includegraphics[width=11cm]{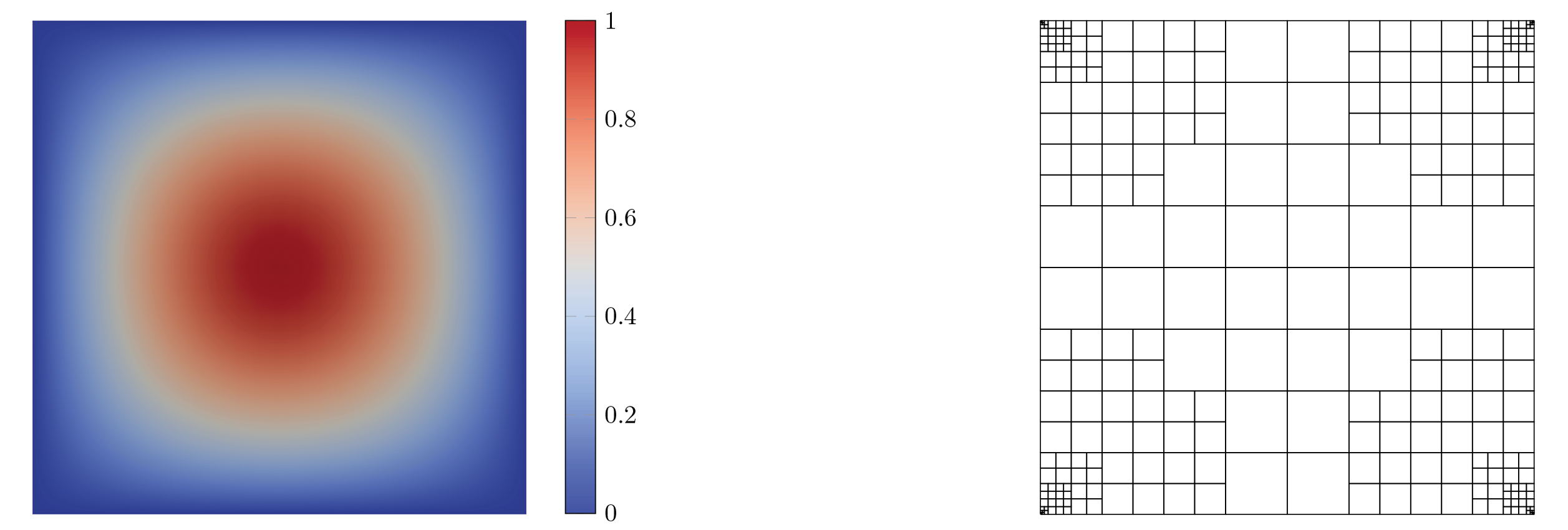}
  \caption{Left: The DPG* solution variable $\lambda$ when $v=1$. (Color scale represents the solution values.) Right: The corresponding quadtree mesh coming from the $h$-adaptive algorithm after ten refinements. (Here, $p=4$ and $\dd\!p=1$.)}
  \label{fig:SquareAdaptiveMesh}
\end{figure}

\begin{remark}
  
  Previously in this subsection, we remarked that similar results were observed for each test space enrichment parameter $\dd\!p\in\{0,1,2\}$ that we chose in our numerical experiments analyzing case (i).
  Similary, in case (ii), the behaviour documented above was nearly indistinguishable for each $\dd\!p\ge 1$.
  However, when $\dd\!p=0$ we observed unexpected effects which we repeatedly verified with independent implementations of the method.
  Indeed, starting from a mesh consisting of a single square element of order $p$ and subsequently performing uniform $h$-refinements, the exact solution $v = 1$ was repeatedly reproduced up to machine zero, no matter the polynomial order $p\in\{1,2,3,4\}$ considered.
  In testing more complicated manufactured solutions (not documented here) which also feature a singular Lagrange multiplier $\lambda$, we discovered superconvergence effects from this choice of enrichment parameter.
  Indeed, in our numerous additional verification experiments with $\dd\!p$ set to zero, the method overcame the rate-limited behavior illustrated in \cref{fig:hUnifOne}.
  This peculiar superconvergence artifact can not be explained by the theory presented in this paper.
  Notably, this artifact also agrees with previous results seen with a DPG* method for acoustics which can be found in the original technical report on the method \cite{Keith2017DPGstar} (which portions of this text are based off of) and clearly warrants further analysis.

\end{remark}

\subsection{Mixed boundary conditions on an L-shaped domain} 
\label{sub:l_shaped_domain}

Again, recall \cref{eq:ModelProblem}.
In this final example, set $\Omega = \Omega_{\Lshaped{}}$, $\Gamma_\Dirichlet = [0,1)\times\{0\} \cup \{0\}\times[0,-1)$, and $v_0 = 0$.
Additionally, set $p_n$ to be the normal derivative of the exact solution $v(r,\theta) = r^{2/3}\sin(\frac{2}{3}\theta)$.
For this problem, it is well known that the solution $v \in H^{1+s}(\Omega)$, for all $s<2/3$.

In each of our experiments, we began with a single three-element mesh composed of congruent squares and uniform order $p_K = q_K =2$ and $\dd\!p=1$ in all three elements.
\cref{fig:hpAdaptivity} demonstrates the convergence of the solution error we witnessed under $h$-uniform, $h$-adaptive (as described above), and $hp$-adaptive refinements using a flagging strategy where all marked element adjacent to the origin (i.e. the singular point) are $h$-refined and all other marked elements are $p$-refined.
As shown in \cref{fig:hpError}, the error estimator $\eta(\bmv_h)$ generally overestimated the solution error and the dependence upon $\dd\! p$ was not seen to be roundly significant.
\cref{fig:LshapedDomain} depicts both the computed solution and the $hp$ mesh mesh after fifteen refinement steps.



\begin{figure}[ht!]
  \centering
  \begin{subfigure}[b]{0.5\textwidth}
    \centering
    \includegraphics[width=7cm]{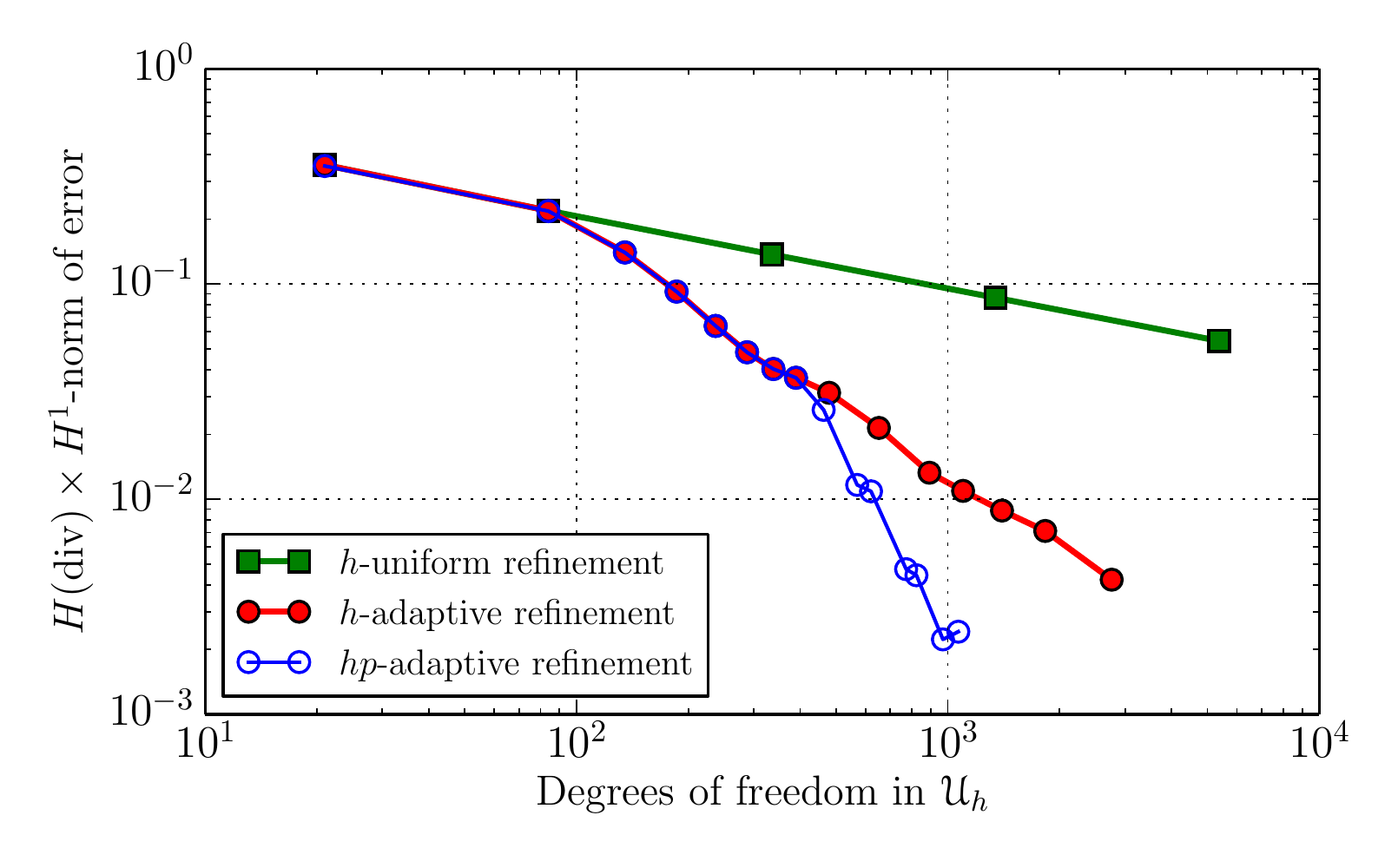}
    \caption{Effect of refinement strategies.}
    \label{fig:hpAdaptivity}
  \end{subfigure}%
  ~ 
  \begin{subfigure}[b]{0.5\textwidth}
    \centering
    \includegraphics[width=7cm]{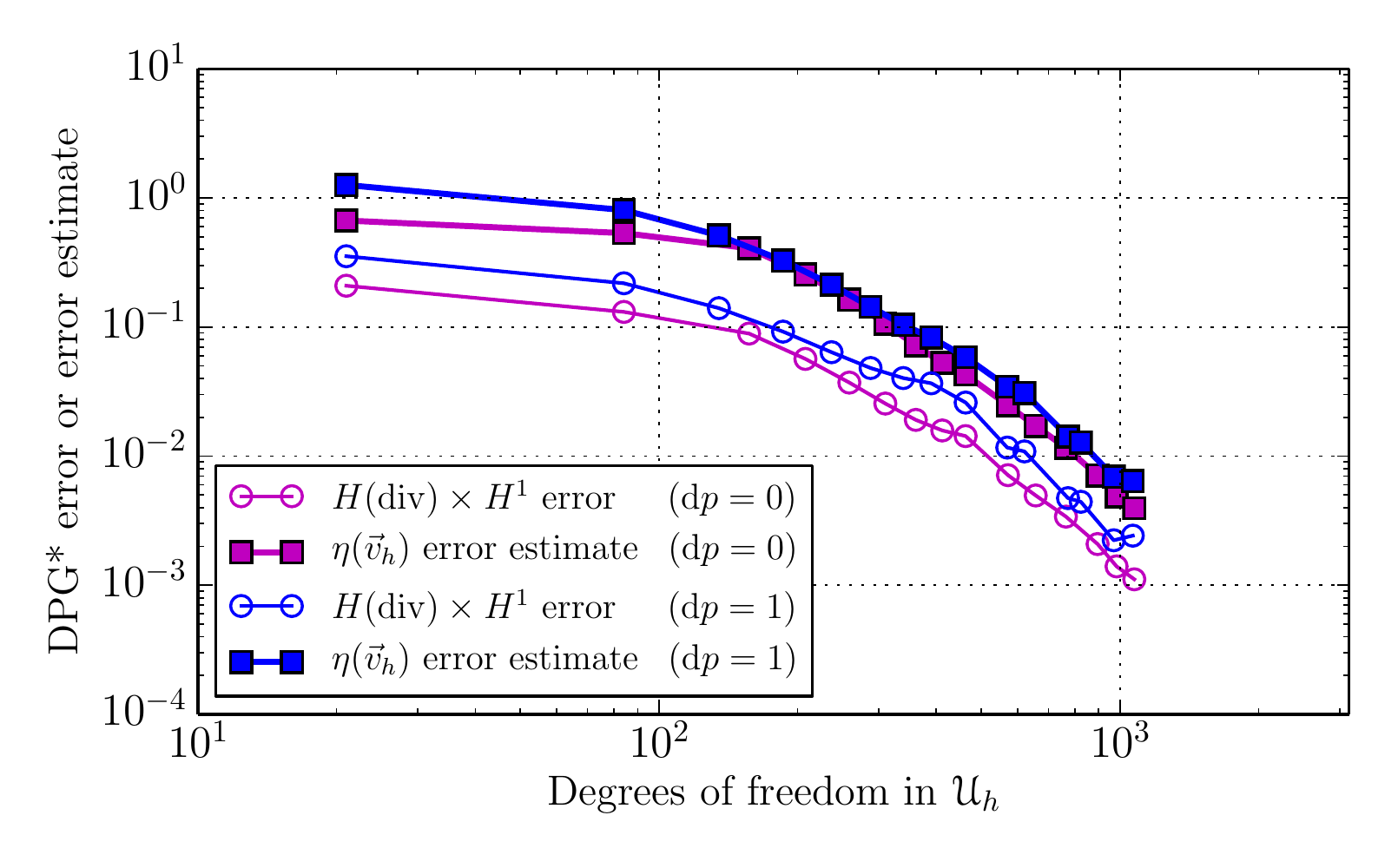}
    \caption{Effect of $\dd\!p$.}
    \label{fig:hpError}
  \end{subfigure}
  \caption{ \subref{fig:hpAdaptivity}\,: Comparison of the convergence of various refinement strategies when $\dd\!p =1$. \subref{fig:hpError}\,: Convergence of the error in the DPG* solution variable $\bmv_h$ and the error estimator $\eta(\bmv_h)$ for two values of $\dd\!p$ with the $hp$-adaptive algorithm.}
  \label{fig:hpAdaptOne}
\end{figure}

\begin{figure}[ht!]
    \centering
    \includegraphics[width=11cm]{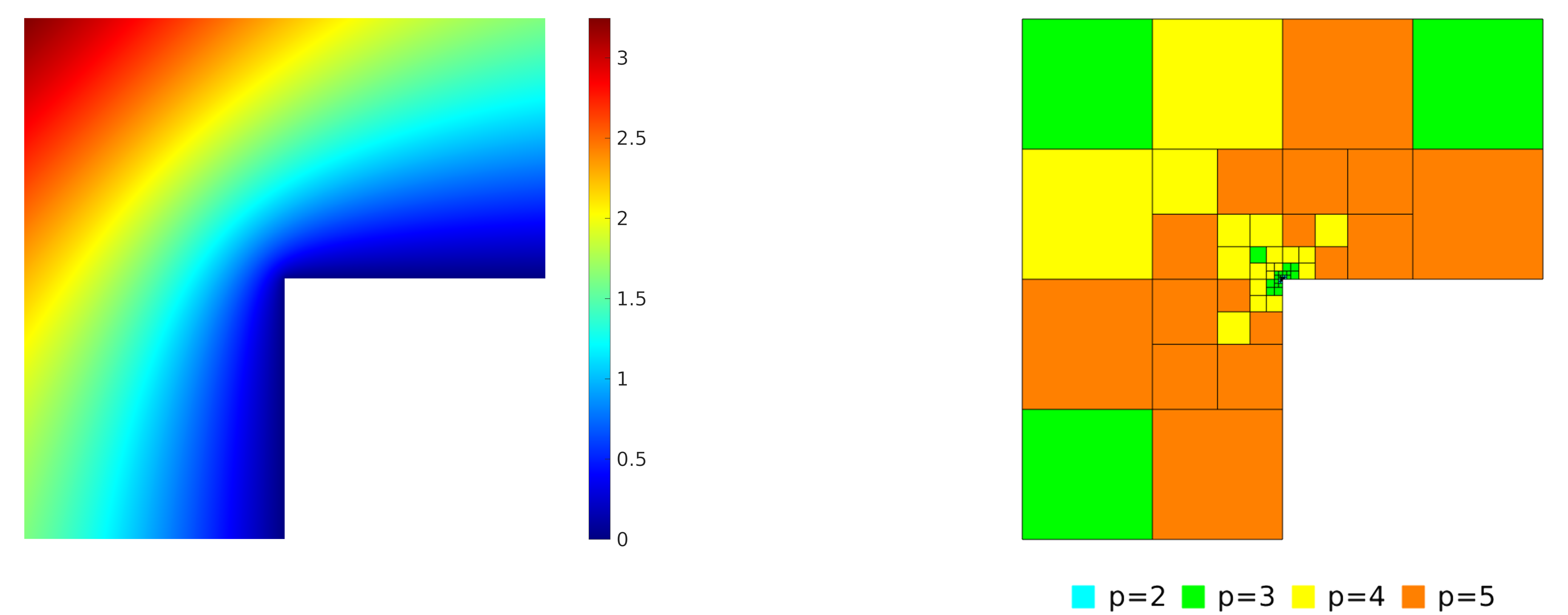}
    \caption{Left: The DPG* solution $v$. (Color scale represents solution values.) Right: The corresponding $hp$ quadtree mesh found by the $hp$-adaptive algorithm after fifteen refinements. (Colors represent polynomial degrees $p$.)}
    \label{fig:LshapedDomain}
\end{figure}


\phantomsection
\bibliographystyle{siam}
\bibliography{main}

\end{document}